\newcommand{\FunctionBigF}{F}
\newcommand{\SimplexWithoutEmptySet}{\Delta_{\IndexSet}^{\backslash \emptyset}}
\newcommand{\AllSubsetsExceptEmptyset}{\emptyset \subsetneq \IndexSubset \subset \IndexSet}
\newcommand{\FSM}{FSM}
\title{Constant Along Primal Rays Conjugacies\\
  and Generalized Convexity \\ 
  for Functions of the Support}
\author{Jean-Philippe Chancelier and 
  Michel De Lara, \\ 
  CERMICS, Ecole des Ponts, Marne-la-Vall\'ee, France}
\begin{document}

\maketitle

\begin{abstract}
  The support of a vector in~$\mathbb{R}^d$ is the set of indices
  with nonzero entries. Functions of the support have the property to
  be $0$-homogeneous and, because of that, 
  the Fenchel conjugacy fails to provide relevant analysis.
  In this paper, we define the coupling \Capra\ between~$\mathbb{R}^d$ and itself 
by dividing the classic Fenchel scalar product coupling by a
given (source) norm on~$\mathbb{R}^d$.
Our main result is that, when both the source norm and its dual norm are orthant-strictly monotonic,
any nondecreasing finite-valued function of the support mapping is \Capra-convex, that is, is 
equal to its Capra-biconjugate (generalized convexity).
We also establish that any such function is the composition of a proper convex lower semi continuous function
  on~$\RR^d$ with the normalization mapping on the unit sphere (hidden
  convexity), and that, when normalized, it admits a variational formulation, 
  which involves a family of generalized local-$\IndexSubset$-support
  dual~norms.
\end{abstract}

{{\bf Key words}: support of a vector, hidden convexity, 
  Fenchel-Moreau conjugacy, Capra conjugacy, generalized convexity, 
  orthant-strictly monotonic norms, coordinate-$\IndexSubset$ norms, 
top-$\IndexSubset$ norms, $\IndexSubset$-support norms.}


\section{Introduction}

The support of a vector in~$\RR^d$, where $d \geq 1$ is a fixed integer, 
is the set of indices in $\IndexSet=\Vset$ with nonzero entries.
We consider the support mapping that maps vectors in~$\RR^d$ to
subsets of indices in~\(  2^\IndexSet \),
and set functions that go from~\( 2^\IndexSet \) to~\( \barRR \).
By composing any set function with the support mapping, we obtain  
\emph{functions of the support mapping} (in short, \FSM).
An example of \FSM\ is the so-called \lzeropseudonorm\
(also called counting function, or cardinality function) which
counts the number of nonzero components of a vector.
The \lzeropseudonorm\ measures the sparsity of a vector,
and it is mentioned in an abundant literature in sparse optimization. 
However, because of its combinatorial nature, 
the problems of minimizing the \lzeropseudonorm\ under constraints
or of minimizing a criterion under $k$-sparsity constraint
(\lzeropseudonorm\ less than a given integer~$k$) 
are usually not tackled as such. Most of the literature in sparse
optimization studies surrogate problems where the \lzeropseudonorm\
either enters a penalization term or is replaced by a regularizing term.
We refer the reader to \cite{Nikolova:2016} that provides a brief tour of the
literature dealing with least squares minimization constrained by $k$-sparsity,
and to \cite{Hiriart-Urruty-Le:2013} for a survey of the rank
function of a matrix, that shares many properties with the
\lzeropseudonorm.

Conjugacies, and more generally dualities, are a powerful tool to tackle
suitable classes of optimization problems. For instance, the Fenchel conjugacy plays a central role in analyzing
solutions of convex problems (and beyond) \cite{Rockafellar:1974}.
Unfortunately, \FSM\ have the property to be $0$-homogeneous and, because of that, 
the Fenchel conjugacy fails to provide relevant analysis.
As an illustration, the Fenchel biconjugate 
of the characteristic function of the level sets of the \lzeropseudonorm\ 
and the Fenchel biconjugate of the \lzeropseudonorm\ both are zero.
However, the field of generalized convexity goes beyond the Fenchel conjugacy 
and convex functions, and provides conjugacies that are adapted to analyze
classes of functions such as 
increasing positive homogeneous, difference of convex, quasi-convex,
increasing and convex-along-rays.
For more details on the theory, and more examples, 
we refer the reader to the books \cite{Singer:1997,Rubinov:2000}
and to the nice introduction paper \cite{Martinez-Legaz:2005}.
To our knowledge, none of the conjugacies in the literature
is adapted to~\FSM.
In this paper, we study~\FSM\ as such (and not surrogate functions) and we 
display a class of conjugacies that 
are suitable for \FSM; thus equipped, we obtain results for nondecreasing finite-valued \FSM.

The paper is organized as follows. 
%
In Sect.~\ref{The_Capra_conjugacy_under_orthant-strict_monotonicity},
we define functions of the support mapping (\FSM), we provide background on
couplings and conjugacies, and we
introduce a source norm~$\TripleNorm{\cdot}$  on~$\RR^d$ and the induced
constant along primal rays coupling~$\CouplingCapra$ (\Capra).
Then, we state our main result: 
if both the source norm $\TripleNorm{\cdot}$ and the dual norm $\TripleNormDual{\cdot}$
are orthant-strictly monotonic, any nondecreasing finite-valued \FSM\ is equal to its 
\Capra-biconjugate, that is, is a \Capra-convex function in the sense 
of generalized convexity.
In Sect.~\ref{The_Capra_conjugacy_and_the_support_mapping}, 
we introduce families of local norms --- (dual) local-coordinate-$\IndexSubset$
norms, generalized top-$\IndexSubset$ and local-$\IndexSubset$-support
dual~norms --- 
based on the source norm~$\TripleNorm{\cdot}$.
Then, we provide formulas 
for \Capra-conjugates, \Capra-subdifferentials and \Capra-biconjugates
of~\FSM.
%
%
%
%
In Sect.~\ref{Hidden_convexity_and_variational_formulation_for_the_pseudo_norm},
we still suppose that both the source norm $\TripleNorm{\cdot}$ and the dual norm $\TripleNormDual{\cdot}$
are orthant-strictly monotonic, and we arrive at unexpected results. 
Indeed, 
we show that any nondecreasing finite-valued \FSM\
coincides, on the unit sphere~$\TripleNormSphere =
\bset{\primal \in \RR^d}{\TripleNorm{\primal} = 1} $, 
with a proper convex lower semi continuous function on~$\RR^d$, and
we deduce a variational formula for nondecreasing finite-valued \FSM\
taking the value~$0$ on the null vector.
%
Sect.~\ref{Conclusion} concludes.
In the Appendix, 
Sect.~\ref{Properties_Constant_along_primal_rays_coupling} provides background
on the constant along primal rays coupling (\Capra), and 
Sect.~\ref{Coordinate-k_and_dual_coordinate-k_norms} gathers material on
local-coordinate-$\IndexSubset$ and generalized local-top-$\IndexSubset$ norms,
and their dual norms.
%

\section{\Capra-convexity of nondecreasing functions of the support mapping (\FSM)}
\label{The_Capra_conjugacy_under_orthant-strict_monotonicity}

In~\S\ref{The_support_mapping_and_its_level_sets},
we define the support mapping, set functions and functions of the support mapping (\FSM).
In~\S\ref{Constant_along_primal_rays_coupling}, we gather background 
on Fenchel-Moreau conjugacies with respect to a coupling,
and we introduce the constant along primal rays coupling~$\CouplingCapra$ (\Capra)
induced by a source norm~$\TripleNorm{\cdot}$.
In~\S\ref{Capra-convexity_of_nondecreasing_FSM}, we state our main result on 
\Capra-convexity of nondecreasing functions of \FSM.

We work on the Euclidian space~$\RR^d$
(with $d \in \NN^*$), equipped with the scalar product 
\( \proscal{\cdot}{\cdot} \) (but not necessarily with the Euclidian norm).
We use the notation $\barRR = [-\infty,+\infty] $.

\subsection{The support mapping}
\label{The_support_mapping_and_its_level_sets}

Let $d \ge 1$ be a fixed integer. We denote by~$\IndexSet$ the following set of indices:
\begin{equation}
  \IndexSet=\Vset \eqfinp  
\end{equation}
We consider the so-called \emph{$\RR^d$-support mapping},
denoted by $\SupportMapping$  and defined by 
\begin{align}
  \SupportMapping : \RR^d
  &\to 2^{\IndexSet} 
    \nonumber 
  \\
  \primal
  &\mapsto \bset{ j \in \IndexSet }{\primal_j \not= 0 } 
    \eqfinp
    \label{eq:support_mapping}
\end{align}
We consider \emph{set functions}
\begin{equation}
  \FunctionBigF: 2^\IndexSet \to \barRR
  \eqfinp   
  \label{eq:FunctionBigF}
\end{equation}
A set function~$\FunctionBigF$ is said to be normalized if 
\( \FunctionBigF\np{\emptyset}=0\).

\begin{definition}
  A \emph{function of the support mapping} (in short, \FSM)
is a function of the form $\FunctionBigF\circ \SupportMapping$,
where~$\FunctionBigF$ is a set function as in~\eqref{eq:FunctionBigF}.
A \FSM\ is said to be \emph{normalized} if it takes the value~$0$ on the null
vector.
\label{de:FSM}
\end{definition}
An example of function of the support mapping is given by 
\( \lzero : \RR^d \to \IndexSet \subset \barRR \),
defined by \( \lzero\np{\primal} = \cardinal{ \Support{\primal} }
= \textrm{number of nonzero components of } \primal \),
\( \forall \primal \in \RR^d \), where $\cardinal{\IndexSubset}$ denotes the cardinal of 
a subset \( \IndexSubset \subset \IndexSet \). The function~\( \lzero \)
falls in our scope with \( \FunctionBigF=\cardinal{\cdot} \), as
$\lzero= \cardinal{\cdot} \circ \SupportMapping$.

It is relevant to observe that \FSM\ are 0-homogeneous, since
the mapping $\SupportMapping$ itself is 0-homogeneous in the sense that:
\begin{equation}
  \SupportMapping\np{\rho\primal} = \SupportMapping\np{\primal} 
  \eqsepv \forall \rho \in \RR\backslash\{0\}
  \eqsepv \forall \primal \in \RR^d
  \eqfinp
  \label{eq:support_mapping_is_0-homogeneous}
\end{equation}
\begin{subequations}
We introduce the \emph{level sets} of the mapping $\SupportMapping$
\begin{equation}
  \SuppLevelSet{\SupportMapping}{\IndexSubset} 
  = 
  \nset{ \primal \in \RR^d }{ \Support{\primal} \subset \IndexSubset}
  \eqsepv \forall \IndexSubset \subset \IndexSet 
  \eqfinv
  \label{eq:support_mapping_level_set}
\end{equation}
and the \emph{level curves} of the mapping $\SupportMapping$
\begin{equation}
  \SuppLevelCurve{\SupportMapping}{\IndexSubset} 
  = 
  \nset{ \primal \in \RR^d }{ \Support{\primal} = \IndexSubset}
  \eqsepv \forall \IndexSubset \subset \IndexSet 
  \eqfinp
  \label{eq:support_mapping_level_curve}
\end{equation}  
\end{subequations}
%

For any subset \( \IndexSubset \subset \IndexSet \), 
we denote the subspace of~$\RR^d$ made of vectors
whose components vanish outside of~$\IndexSubset$ by\footnote{%
  Here, following notation from Game Theory, 
  we have denoted by $-\IndexSubset$ the complementary subset 
  of~$\IndexSubset$ in \( \IndexSet \): \( \IndexSubset \cup (-\IndexSubset) = \IndexSet \)
  and \( \IndexSubset \cap (-\IndexSubset) = \emptyset \).}
\begin{equation}
  \FlatRR_{\IndexSubset} = \RR^\IndexSubset \times \{0\}^{-\IndexSubset} =
  \bset{ \primal \in \RR^d }{ \primal_j=0 \eqsepv \forall j \not\in \IndexSubset } 
  \subset \RR^d 
  \eqfinv
  \label{eq:FlatRR}
\end{equation}
where \( \FlatRR_{\emptyset}= \na{0} \).
We denote by \( \pi_\IndexSubset : \RR^d \to \FlatRR_{\IndexSubset} \) 
the \emph{orthogonal projection mapping}
and, for any vector \( \primal \in \RR^d \), by 
\( \primal_\IndexSubset = \pi_\IndexSubset\np{\primal} \in \FlatRR_{\IndexSubset} \) 
the vector which coincides with~\( \primal \),
except for the components outside of~$\IndexSubset$ that are zero.
It is easily seen that the orthogonal projection mapping~$\pi_\IndexSubset$
is self-dual, giving
\begin{equation}
  \proscal{\primal_\IndexSubset}{\dual_\IndexSubset} 
  = \proscal{\primal_\IndexSubset}{\dual} 
  = \bscal{\pi_\IndexSubset\np{\primal}}{\dual} 
  = \bscal{\primal}{\pi_\IndexSubset\np{\dual}}
  = \proscal{\primal}{\dual_\IndexSubset} 
  \eqsepv 
  \forall \primal \in \RR^d 
  \eqsepv 
  \forall \dual \in \RR^d 
  \eqfinp
  \label{eq:orthogonal_projection_self-dual}
\end{equation}
The level sets of the $\SupportMapping$ mapping 
in~\eqref{eq:support_mapping_level_set}
are easily related to the subspaces~\( \FlatRR_{\IndexSubset} \) of~\( \RR^d \),
as defined in~\eqref{eq:FlatRR}, by
\begin{equation}
  \SuppLevelSet{\SupportMapping}{\IndexSubset} 
  =
  \bset{ \primal \in \RR^d }{ \Support{\primal} \subset \IndexSubset}
  {= \FlatRR_{\IndexSubset} }
  \eqsepv \forall \IndexSubset \subset \IndexSet
  \eqfinp
  \label{eq:support_mapping_level_set_FlatRR}
\end{equation}
As we manipulate functions with values 
in~$\barRR = [-\infty,+\infty] $,
we adopt the Moreau \emph{lower ($\LowPlus$) and upper ($\UppPlus$) additions} \cite{Moreau:1970},
which extend the usual addition~($+$) with 
\( \np{+\infty} \LowPlus \np{-\infty}=\np{-\infty} \LowPlus \np{+\infty}=-\infty \) and
\( \np{+\infty} \UppPlus \np{-\infty}=\np{-\infty} \UppPlus \np{+\infty}=+\infty \).
Let \( \UNCERTAIN \) be a set.
For any function \( \fonctionuncertain : \UNCERTAIN \to \barRR \),
its \emph{epigraph} is \( \epigraph\fonctionuncertain= 
\defset{ \np{\uncertain,t}\in\UNCERTAIN\times\RR}%
{\fonctionuncertain\np{\uncertain} \leq t} \),
its \emph{effective domain} is 
\( \dom\fonctionuncertain= 
\defset{\uncertain\in\UNCERTAIN}{ \fonctionuncertain\np{\uncertain} <+\infty}
\).
A function \( \fonctionuncertain : \UNCERTAIN \to \barRR \)
is said to be \emph{proper} if it never takes the value~$-\infty$
and if \( \dom\fonctionuncertain \not = \emptyset \).
When \( \UNCERTAIN \) is equipped with a topology,
the function \( \fonctionuncertain : \UNCERTAIN \to \barRR \)
is said to be \emph{lower semi continuous (lsc)}
if its epigraph is closed.
For any subset \( \Uncertain \subset \UNCERTAIN \),
$\delta_{\Uncertain} : \UNCERTAIN \to \barRR $ denotes the \emph{characteristic function} of the
set~$\Uncertain$:
\begin{equation}
  \delta_{\Uncertain}\np{\uncertain} = 0 \mtext{ if } \uncertain \in \Uncertain
  \eqsepv
  \delta_{\Uncertain}\np{\uncertain} = +\infty \mtext{ if } \uncertain \not\in \Uncertain 
  \eqfinp 
  \label{eq:characteristic_function}
\end{equation}

\subsection{Constant along primal rays coupling (\Capra)}
\label{Constant_along_primal_rays_coupling}

In~\S\ref{Background_on_Fenchel-Moreau_conjugacies}, we gather background 
on Fenchel-Moreau conjugacies with respect to a coupling~$\coupling$ and we give definition and
characterization of $\coupling$-convex functions.
Then, in~\S\ref{subsubsectionConstant_along_primal_rays_coupling},
we show how to define a constant along primal rays
  coupling~$\CouplingCapra$ (\Capra) by means of a norm.

\subsubsection{Background on Fenchel-Moreau conjugacies}
\label{Background_on_Fenchel-Moreau_conjugacies}

We review concepts and notations related to the Fenchel conjugacy
(we refer the reader to \cite{Rockafellar:1974}), and 
then we present how they are extended to general conjugacies
\cite{Singer:1997,Rubinov:2000,Martinez-Legaz:2005}.

\subsubsubsection{The Fenchel conjugacy on~\( \RR^d \)}

The classic \emph{Fenchel conjugacy}~$\star$ is defined, 
for any functions \( \fonctionprimal : \PRIMAL  \to \barRR \)
and \( \fonctiondual : \DUAL \to \barRR \), by\footnote{%
  In convex analysis, one does not use the notation~\( \LFMr{} \)
in~\eqref{eq:Fenchel_conjugate_reverse} and \eqref{eq:Fenchel_biconjugate},
  but simply~\( \LFM{} \). We use~\( \LFMr{} \) and ~\( \LFMbi{} \)
  to be consistent with the notation~\eqref{eq:Fenchel-Moreau_reverse_conjugate} 
and~\eqref{eq:Fenchel-Moreau_biconjugate} for general conjugacies.}
\begin{subequations}
  \begin{align}
    \LFM{\fonctionprimal}\np{\dual} 
    &= 
      \sup_{\primal \in \RR^d} \Bp{ \proscal{\primal}{\dual} 
      \LowPlus \bp{ -\fonctionprimal\np{\primal} } } 
      \eqsepv \forall \dual \in \RR^d
      \eqfinv
      \label{eq:Fenchel_conjugate}
    \\
    \LFMr{\fonctiondual}\np{\primal} 
    &= 
      \sup_{ \dual \in \DUAL } \Bp{ \proscal{\primal}{\dual} 
      \LowPlus \bp{ -\fonctiondual\np{\dual} } } 
      \eqsepv \forall \primal \in \RR^d
      \eqfinv
      \label{eq:Fenchel_conjugate_reverse}
    \\
    \LFMbi{\fonctionprimal}\np{\primal} 
    &= 
      \sup_{\dual \in \RR^d} \Bp{ \proscal{\primal}{\dual} 
      \LowPlus \bp{ -\LFM{\fonctionprimal}\np{\dual} } } 
      \eqsepv \forall \primal \in \RR^d
      \eqfinp
      \label{eq:Fenchel_biconjugate}
  \end{align}
\end{subequations}
Recall that a function is said to be \emph{convex} if its
epigraph is a convex subset of $\RR^d \times \RR$,
and is said to be \emph{closed} if it is either lsc
and nowhere having the value $-\infty$,
or is the constant function~$-\infty$
\cite[p.~15]{Rockafellar:1974}.
It is proved that the Fenchel conjugacy induces a one-to-one correspondence
between the closed convex functions on~$\RR^d$ and themselves
\cite[Theorem~5]{Rockafellar:1974}.
Closed convex functions are the two constant functions~$-\infty$ and~$+\infty$
united with all proper convex lsc functions.\footnote{%
In particular, any closed convex function that takes at least one finite value
is necessarily proper convex~lsc. \label{ft:closed_convex_function}}

\subsubsubsection{The general case \cite{Singer:1997,Rubinov:2000,Martinez-Legaz:2005}}

We consider two sets $\PRIMAL$ (``primal''), $\DUAL$ (``dual''),
not necessarily vector spaces, together 
with a \emph{coupling} function
\begin{equation}
  \coupling : \PRIMAL \times \DUAL \to \barRR 
  \eqfinp 
\end{equation}
With any coupling, one associates \emph{conjugacies} 
from \( \barRR^\PRIMAL \) to \( \barRR^\DUAL \) 
and from \( \barRR^\DUAL \) to \( \barRR^\PRIMAL \) as follows.

\begin{subequations}
  \begin{definition}
    The \emph{$\coupling$-Fenchel-Moreau conjugate} of a 
    function \( \fonctionprimal : \PRIMAL  \to \barRR \), 
    with respect to the coupling~$\coupling$, is
    the function \( \SFM{\fonctionprimal}{\coupling} : \DUAL  \to \barRR \) 
    defined by
    \begin{equation}
      \SFM{\fonctionprimal}{\coupling}\np{\dual} = 
      \sup_{\primal \in \PRIMAL} \Bp{ \coupling\np{\primal,\dual} 
        \LowPlus \bp{ -\fonctionprimal\np{\primal} } } 
      \eqsepv \forall \dual \in \DUAL
      \eqfinp
      \label{eq:Fenchel-Moreau_conjugate}
    \end{equation}
    With the coupling $\coupling$, we associate 
    the \emph{reverse coupling~$\coupling'$} defined by 
    \begin{equation}
      \coupling': \DUAL \times \PRIMAL \to \barRR 
      \eqsepv
      \coupling'\np{\dual,\primal}= \coupling\np{\primal,\dual} 
      \eqsepv
      \forall \np{\dual,\primal} \in \DUAL \times \PRIMAL
      \eqfinp
      \label{eq:reverse_coupling}
    \end{equation}
    The \emph{$\coupling'$-Fenchel-Moreau conjugate} of a 
    function \( \fonctiondual : \DUAL \to \barRR \), 
    with respect to the coupling~$\coupling'$, is
    the function \( \SFM{\fonctiondual}{\coupling'} : \PRIMAL \to \barRR \) 
    defined by
    \begin{equation}
      \SFM{\fonctiondual}{\coupling'}\np{\primal} = 
      \sup_{ \dual \in \DUAL } \Bp{ \coupling\np{\primal,\dual} 
        \LowPlus \bp{ -\fonctiondual\np{\dual} } } 
      \eqsepv \forall \primal \in \PRIMAL 
      \eqfinp
      \label{eq:Fenchel-Moreau_reverse_conjugate}
    \end{equation}
    The \emph{$\coupling$-Fenchel-Moreau biconjugate} of a 
    function \( \fonctionprimal : \PRIMAL  \to \barRR \), 
    with respect to the coupling~$\coupling$, is
    the function \( \SFMbi{\fonctionprimal}{\coupling} : \PRIMAL \to \barRR \) 
    defined by
    \begin{equation}
      \SFMbi{\fonctionprimal}{\coupling}\np{\primal} = 
      \bp{\SFM{\fonctionprimal}{\coupling}}^{\coupling'} \np{\primal} = 
      \sup_{ \dual \in \DUAL } \Bp{ \coupling\np{\primal,\dual} 
        \LowPlus \bp{ -\SFM{\fonctionprimal}{\coupling}\np{\dual} } } 
      \eqsepv \forall \primal \in \PRIMAL 
      \eqfinp
      \label{eq:Fenchel-Moreau_biconjugate}
    \end{equation}
  \end{definition}
\end{subequations}
The biconjugate of a 
function \( \fonctionprimal : \PRIMAL  \to \barRR \) satisfies
\begin{equation}
  \SFMbi{\fonctionprimal}{\coupling}\np{\primal}
  \leq \fonctionprimal\np{\primal}
  \eqsepv \forall \primal \in \PRIMAL 
  \eqfinp
  \label{eq:galois-cor}
\end{equation}
With the notion of $\coupling$-biconjugate, 
the classic notion of convex function is generalized as follows.
\begin{definition}
  A function \( \fonctionprimal : \PRIMAL \to \barRR \) 
  is said to be \emph{$\coupling$-convex} it is equal to its
  $\coupling$-biconjugate:
  \begin{equation}
    \fonctionprimal \textrm{ is } \coupling\textrm{-convex }
    \iff 
    \SFMbi{\fonctionprimal}{\coupling}=\fonctionprimal 
    \eqfinp
  \end{equation}
  \label{de:coupling-convex_function}
\end{definition}
In generalized convexity, it is established
that $\coupling$-convex functions 
are all functions of the form 
$\SFM{ \fonctiondual }{\coupling'}$, 
for all \( \fonctiondual : \DUAL \to \barRR \),
or, equivalently,
all functions of the form $\SFMbi{\fonctionprimal}{\coupling}$, 
for all \( \fonctionprimal : \PRIMAL \to \barRR \).
As an illustration, the $\star$-convex functions are the closed convex functions
since, as recalled above, the Fenchel conjugacy induces a one-to-one correspondence
between the closed convex functions on~$\RR^d$ and themselves.

\subsubsection{Constant along primal rays coupling (\Capra)}
\label{subsubsectionConstant_along_primal_rays_coupling}

  Let $\TripleNorm{\cdot}$ be a norm on~$\RR^d$, called the \emph{source norm}.
We denote the unit sphere~$\TripleNormSphere$ and the unit ball~$\TripleNormBall$ 
of the norm~$\TripleNorm{\cdot}$ by 
\begin{equation}
  \TripleNormSphere= 
      \defset{\primal \in \RR^d}{\TripleNorm{\primal} = 1} 
\eqsepv
    \TripleNormBall = 
      \defset{\primal \in \RR^d}{\TripleNorm{\primal} \leq 1} 
      \eqfinp
 \label{eq:triplenorm_unit_sphere}
\end{equation}

Following \cite{Chancelier-DeLara:2020_ECAPRA_JCA,Chancelier-DeLara:2020_CAPRA_OPTIMIZATION}, we introduce 
the \emph{constant along primal rays coupling~$\CouplingCapra$ (\Capra)}.

\begin{definition}(\cite[Definition~\ref{CAPRA-de:coupling_CAPRA}]{Chancelier-DeLara:2020_CAPRA_OPTIMIZATION})
  We define the \emph{coupling}~$\CouplingCapra$, or \Capra,
  between $\RR^d$ and $\RR^d$ by
  \begin{equation}
    \forall \dual \in \RR^d \eqsepv 
    \begin{cases}
      \CouplingCapra\np{\primal, \dual} 
      &= \displaystyle
      \frac{ \proscal{\primal}{\dual} }{ \TripleNorm{\primal} }
      \eqsepv \forall \primal \in \RR^d\backslash\{0\} \eqfinv
      \\[4mm]
      \CouplingCapra\np{0, \dual} &= 0.
    \end{cases}
    \label{eq:coupling_CAPRA}
  \end{equation}
    \label{de:coupling_CAPRA}
\end{definition}
We stress the point that, in~\eqref{eq:coupling_CAPRA},
the Euclidian scalar product \( \proscal{\primal}{\dual} \)
and the norm term \( \TripleNorm{\primal} \) need not be related, 
that is, the norm~$\TripleNorm{\cdot}$ is not necessarily Euclidian.

The coupling \Capra\ has the property of being 
\emph{constant along primal rays}, hence the acronym~\Capra\
(Constant Along Primal RAys).
We introduce 
the primal \emph{normalization mapping}~$\normalized$,
from $\RR^d$ towards the unit sphere~\( \TripleNormSphere \)
united with $\{0\}$, 
as follows:
\begin{equation}
  \normalized : \RR^d \to \TripleNormSphere \cup \{0\} 
  \eqsepv
  \normalized\np{\primal}=
  \begin{cases}
    \frac{\primal}{\TripleNorm{\primal}}
    & \mtext{ if } \primal \neq 0 \eqfinv 
    \\[3mm]
    0 
    & \mtext{ if } \primal = 0 \eqfinp
  \end{cases}  
  \label{eq:normalization_mapping}
\end{equation}

\subsection{\Capra-convexity of nondecreasing \FSM}
\label{Capra-convexity_of_nondecreasing_FSM}


We recall definitions of orthant-monotonic 
and orthant-strictly monotonic norms.
For any \( \primal \in \RR^d \), we denote by \( \module{\primal} \)
the vector of~$ \RR^{d}$ with components $|\primal_i|$, $i=1,\ldots,d$:
\begin{equation}
  \primal=\np{\primal_1,\ldots,\primal_d} \implies 
  \module{\primal} =\np{\module{\primal_1},\ldots,\module{\primal_d}}
  \eqfinp 
  \label{eq:module}
\end{equation}

\begin{definition}    
  A norm \( \TripleNorm{\cdot}\) on the space~\( \RR^d \) is called
  \begin{itemize}
  \item 
    \emph{orthant-monotonic} \cite{Gries:1967}
    if, for all 
    $\primal$, $\primal'$ in~$ \RR^{d}$, we have 
    \bp{  \(
      |\primal| \le |\primal'| \text{ and } 
      \primal~\circ~\primal' \ge 0 \implies 
      \TripleNorm{\primal} \le \TripleNorm{\primal'}
      \) },
    where $|\primal| \leq |\primal'|$ means 
    $|\primal_i| \leq |\primal'_i|$ for all $i=1,\ldots,d$, 
    and     where $\primal~\circ~\primal' =
    \np{ \primal_1 \primal'_1,\ldots, \primal_d \primal'_d}$
    is the Hadamard (entrywise) product, 
  \item 
    \emph{orthant-strictly monotonic} \cite[Definition~\ref{OSM-de:orthant-monotonic}]{Chancelier-DeLara:2020_OSM}
    if, for all 
    $\primal$, $\primal'$ in~$ \RR^{d}$, we have 
    \bp{  \(
      |\primal| < |\primal'| \text{ and } 
      \primal~\circ~\primal' \ge 0 \implies 
      \TripleNorm{\primal} < \TripleNorm{\primal'}
      \) },
    where \( |\primal| < |\primal'| \) means that 
    $|\primal_i| \le |\primal'_i|$ for all $i=1,\ldots,d$, 
    and there exists $j \in \na{1,\ldots,d}$ such that
    $|\primal_j| < |\primal'_j|$.
  \end{itemize}
  \label{de:orthant-monotonic}
\end{definition}

We now establish that,
when both the source norm and its dual norm are orthant-strictly monotonic,
any nondecreasing finite-valued \FSM\ is equal to its 
\Capra-biconjugate, that is, is a \Capra-convex function.

\begin{theorem}
  \label{th:support_mapping_conjugate}
   Let $\TripleNorm{\cdot}$ be the source norm with 
associated coupling $\CouplingCapra$, as in Definition~\ref{de:coupling_CAPRA}. 
Suppose that both the norm $\TripleNorm{\cdot}$ and the dual norm $\TripleNormDual{\cdot}$
  are orthant-strictly monotonic.
  Then, for any nondecreasing finite-valued set function 
\( \FunctionBigF : 2^{\IndexSet} \to \RR \), we have
  \begin{equation}
    \SFMbi{ \np{ \FunctionBigF \circ \SupportMapping } }{\CouplingCapra} 
    = \FunctionBigF \circ \SupportMapping 
    \eqfinv
    \label{eq:biconjugate_l0norm_TopDualNorm}        
  \end{equation}
  that is, the function \( \FunctionBigF \circ \SupportMapping \) 
  is a \Capra-convex function (see Definition~\ref{de:coupling-convex_function}). 
\end{theorem}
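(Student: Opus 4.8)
The plan is to reduce the statement to two ingredients: a structural description of \FSM\ in terms of characteristic functions of the level sets~$\FlatRR_{\IndexSubset}$, and a \Capra-biconjugation result for those characteristic functions. First I would use the fact that a nondecreasing finite-valued set function~$\FunctionBigF$ can be written as a finite combination of indicator-type building blocks indexed by the subsets~$\IndexSubset \subset \IndexSet$. Concretely, since $\FunctionBigF\circ\SupportMapping$ is constant on each level curve~$\SuppLevelCurve{\SupportMapping}{\IndexSubset}$ and the level sets~$\SuppLevelSet{\SupportMapping}{\IndexSubset} = \FlatRR_{\IndexSubset}$ from~\eqref{eq:support_mapping_level_set_FlatRR} are nested according to inclusion of subsets, one can express $\FunctionBigF\circ\SupportMapping$ as an infimum (over~$\IndexSubset$) of functions of the form $\FunctionBigF\np{\IndexSubset} \UppPlus \delta_{\FlatRR_{\IndexSubset}}$, using that $\FunctionBigF$ is nondecreasing to control which subsets are active. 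Equivalently, one works with the epigraph: $\epigraph\np{\FunctionBigF\circ\SupportMapping}$ is a union over~$\IndexSubset$ of the sets $\FlatRR_{\IndexSubset}\times[\FunctionBigF\np{\IndexSubset},+\infty)$.

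The second ingredient is that \Capra-conjugacy turns these pieces into something whose \Capra-biconjugate can be computed. I would invoke the conjugacy machinery developed in the companion paper (the \Capra\ conjugates and biconjugates of the characteristic functions~$\delta_{\FlatRR_{\IndexSubset}}$ and of~$\lzero$, referenced as \texttt{CAPRA-pr:pseudonormlzero\_biconjugate\_varphi} and the surrounding results): under the orthant-strict monotonicity hypotheses on both $\TripleNorm{\cdot}$ and $\TripleNormDual{\cdot}$, the relevant local norms (generalized top-$\IndexSubset$ and $\IndexSubset$-support dual norms) behave well enough that $\delta_{\FlatRR_{\IndexSubset}}$ restricted to the sphere is recovered by its \Capra-biconjugate. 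The key point is that \Capra-convexity is preserved under pointwise infima of \Capra-convex functions (indeed any infimum of functions of the form $\SFM{\fonctiondual}{\CouplingCapra'}$ is again of that form, hence \Capra-convex — this follows from the general theory recalled after Definition~\ref{de:coupling-convex_function}), so once each building block $\FunctionBigF\np{\IndexSubset} \UppPlus \delta_{\FlatRR_{\IndexSubset}}$ is shown to be \Capra-convex, the infimum representing $\FunctionBigF\circ\SupportMapping$ is \Capra-convex as well, which is exactly~\eqref{eq:biconjugate_l0norm_TopDualNorm}.

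The delicate part — and where the orthant-strict monotonicity of the \emph{dual} norm is essential — is proving that each $\delta_{\FlatRR_{\IndexSubset}}$, or more precisely $\FunctionBigF\np{\IndexSubset}\UppPlus\delta_{\FlatRR_{\IndexSubset}}$, is \Capra-convex. One computes its \Capra-conjugate: for $\dual\in\RR^d$, $\SFM{\np{\FunctionBigF\np{\IndexSubset}\UppPlus\delta_{\FlatRR_{\IndexSubset}}}}{\CouplingCapra}\np{\dual} = \sup_{\primal\in\FlatRR_{\IndexSubset}\backslash\{0\}}\proscal{\primal}{\dual}/\TripleNorm{\primal} - \FunctionBigF\np{\IndexSubset}$, and the supremum is a dual-type norm of~$\dual$ computed over the subspace~$\FlatRR_{\IndexSubset}$; then one must re-conjugate and check that the sup over~$\dual$ reconstructs the characteristic function exactly on the whole of~$\RR^d$ (not merely up to closure of the convex hull). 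This is where the earlier results on orthant-monotonic norms (Proposition~\ref{OSM-pr:orthant-strictly_monotonic} and the rotundity/exposedness consequences, e.g.\ \texttt{OSM-coro:osmexposed}) come in: orthant-strict monotonicity of $\TripleNormDual{\cdot}$ forces the relevant faces of the dual unit ball to be sufficiently exposed that the biconjugate does not "fill in" the gap between $\FlatRR_{\IndexSubset}$ and a larger subspace. I expect the main obstacle to be precisely this step — verifying that the variational formula for the \Capra-biconjugate separates the subspace~$\FlatRR_{\IndexSubset}$ from vectors with strictly larger support, which is a quantitative consequence of strict monotonicity rather than a soft argument; the bookkeeping of which subsets~$\IndexSubset$ are active in the infimum representation (handled by monotonicity of~$\FunctionBigF$) is routine by comparison. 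Once that is in place, one concludes by assembling the pieces and invoking~\eqref{eq:galois-cor} for the reverse inequality, which is automatic.
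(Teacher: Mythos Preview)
Your proposal contains a genuine error at a critical juncture. You claim that ``\Capra-convexity is preserved under pointwise infima of \Capra-convex functions'' and justify this by saying that ``any infimum of functions of the form $\SFM{\fonctiondual}{\CouplingCapra'}$ is again of that form''. This is false. In any Fenchel--Moreau conjugacy, the $\coupling$-convex functions are stable under pointwise \emph{suprema}, not infima: from $\sup_i \SFM{g_i}{\coupling'} = \SFM{\bp{\inf_i g_i}}{\coupling'}$ one sees that a supremum of $\coupling$-convex functions is again a $\coupling'$-conjugate, hence $\coupling$-convex; there is no analogous statement for infima. This is exactly parallel to the classical fact that an infimum of convex functions need not be convex. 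In your setting, each building block $\FunctionBigF\np{\IndexSubset}+\delta_{\FlatRR_{\IndexSubset}}$ is indeed \Capra-convex (it equals $g_{\IndexSubset}\circ\normalized$ with $g_{\IndexSubset}=\FunctionBigF\np{\IndexSubset}+\delta_{\FlatRR_{\IndexSubset}}$ closed convex), but $\inf_{\IndexSubset} g_{\IndexSubset} = \FunctionBigF\circ\SupportMapping$ is not convex, so the obvious candidate for a closed convex function whose composition with~$\normalized$ yields the infimum fails. Producing a \emph{different} closed convex function that works is precisely the content of the theorem and cannot be obtained from the decomposition alone.

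The paper's proof proceeds along an entirely different axis: it shows (Proposition~\ref{pr:nonempty_subdifferential}) that under orthant-strict monotonicity of both $\TripleNorm{\cdot}$ and $\TripleNormDual{\cdot}$, the \Capra-subdifferential $\partial_{\CouplingCapra}\np{\FunctionBigF\circ\SupportMapping}\np{\primal}$ is nonempty at every~$\primal$. Once a subgradient~$\dual$ exists, the definition~\eqref{eq:Capra-subdifferential_b} gives $\SFM{\np{\FunctionBigF\circ\SupportMapping}}{\CouplingCapra}\np{\dual}=\CouplingCapra\np{\primal,\dual}-\np{\FunctionBigF\circ\SupportMapping}\np{\primal}$, and substituting into the biconjugate yields $\SFMbi{\np{\FunctionBigF\circ\SupportMapping}}{\CouplingCapra}\np{\primal}\geq\np{\FunctionBigF\circ\SupportMapping}\np{\primal}$; the reverse inequality is~\eqref{eq:galois-cor}. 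The orthant-strict monotonicity of the dual norm is used not to expose faces in a biconjugation computation as you suggest, but to guarantee, for each $\primal\neq 0$ with $\Support{\primal}=L$, the existence of a dual vector~$\dual$ with $\Support{\dual}=L$ achieving equality in~\eqref{eq:norm_dual_norm_inequality}, and to ensure the strict inequality $\CoordinateNormDual{\TripleNorm{\dual}}{\IndexSubset}<\CoordinateNormDual{\TripleNorm{\dual}}{L}$ whenever $L\not\subset\IndexSubset$, which allows a scaling $\lambda\dual$ to land in the subdifferential.
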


The proof of Theorem~\ref{th:support_mapping_conjugate} is relegated
in~\S\ref{Proof_of_Theorem},
after we analyze functions of the support mapping by means of the
\Capra-conjugacy in Sect.~\ref{The_Capra_conjugacy_and_the_support_mapping}.

\section{The \Capra-conjugacy and functions of the support mapping (\FSM)}
\label{The_Capra_conjugacy_and_the_support_mapping}

In~\S\ref{Definition_of_local_coordinate-k_and_dual_coordinate-k_norms}, 
we introduce families of local norms --- local-coordinate-$\IndexSubset$ norms and their dual
norms, as well as generalized local-top-$\IndexSubset$ and
local-$\IndexSubset$-support dual~norms.
Then, we provide formulas 
for \Capra-conjugates of functions of the support mapping
in~\S\ref{CAPRAC_conjugates_related_to_the_pseudo_norm},
for \Capra-subdifferentials
in~\S\ref{Capra-subdifferentials_related_to_the_function_of_the_support_mapping},
and for \Capra-biconjugates
in~\S\ref{CAPRAC_biconjugates_related_to_the_pseudo_norm}.
Finally, we prove Theorem~\ref{th:support_mapping_conjugate}
in~\S\ref{Proof_of_Theorem}.

\subsection{Local-$\IndexSubset$ norms}
\label{Definition_of_local_coordinate-k_and_dual_coordinate-k_norms}

To analyze the support mapping by means of the \Capra\ conjugacy, 
we introduce families of local norms --- on the subspaces
$\FlatRR_\IndexSubset \subset \RR^d$ in~\eqref{eq:FlatRR}, 
for any subset \( \IndexSubset \subset \IndexSet \) --- as follows.

\subsubsubsection{Restriction norms}

We introduce the so-called restriction norms.

\begin{definition}
  For any norm~$\TripleNorm{\cdot}$ on~$\RR^d$
  and any subset \( K \subset\ic{1,d} \),
  we define 
  \begin{itemize}
  \item 
    the \emph{$K$-restriction norm} \( \TripleNorm{\cdot}_{K} \)
    on the subspace~\( \FlatRR_{K} \) of~\( \RR^d \),
    as defined in~\eqref{eq:FlatRR}, by
    \begin{equation}
      \TripleNorm{\primal}_{K} = \TripleNorm{\primal} 
      \eqsepv
      \forall \primal \in \FlatRR_{K} 
      \eqfinp 
      \label{eq:K_norm}
    \end{equation}
  \item 
    the \emph{$\SetStar{K}$-norm} \( \TripleNorm{\cdot}_{K,\star} \),
    on the subspace~\( \FlatRR_{K} \) of~\( \RR^d \), which is 
    the norm \( \bp{\TripleNorm{\cdot}_{K}}_{\star} \),
    given by the dual norm (on the subspace~\( \FlatRR_{K} \))
    of the restriction norm~\( \TripleNorm{\cdot}_{K} \) 
    to the subspace~\( \FlatRR_{K} \) (first restriction, then dual),
 \item 
    the $\StarSet{K}$-norm
    \( \TripleNorm{\cdot}_{\star,K} \) is 
    the norm \( \bp{\TripleNorm{\cdot}_{\star}}_{K} \),
    given by the restriction to the subspace~\( \FlatRR_{K} \) of
    the dual norm~$\TripleNormDual{\cdot}$ (first dual, then restriction).
  \end{itemize}
  \label{de:K_norm}
\end{definition}

We have that \cite[Equation~\eqref{OSM-eq:K_star=sigma}]{Chancelier-DeLara:2020_OSM}
\begin{equation}
  \TripleNorm{\dual}_{K,\star}
  =
  \sigma_{ \FlatRR_{K} \cap \TripleNormBall }\np{\dual}  
  = \sigma_{ \FlatRR_{K} \cap \TripleNormSphere }\np{\dual}  
  \eqsepv \forall \dual \in \FlatRR_{K} 
  \eqfinp
  \label{eq:K_star=sigma}
\end{equation}

\subsubsubsection{Local-coordinate-$\IndexSubset$ norms and their dual norms}

\begin{definition}
  For any subset \( \IndexSubset \subset \IndexSet \), 
we call \emph{local-coordinate-$\IndexSubset$ norm} 
  (associated with the source norm~$\TripleNorm{\cdot}$)
  the norm \( \CoordinateNorm{\TripleNorm{\cdot}}{\IndexSubset} \)
  (on the subspace $\FlatRR_\IndexSubset$ in~\eqref{eq:FlatRR}) 
given by 
\begin{equation}
  \CoordinateNorm{\TripleNorm{\cdot}}{\IndexSubset} 
= \bp{ \CoordinateNormDual{\TripleNorm{\cdot}}{\IndexSubset} }_\star 
\eqfinv
    \label{eq:coordinate_norm_definition}
\end{equation}
that is, whose dual norm (on the subspace $\FlatRR_\IndexSubset$) is the 
  \emph{dual local-coordinate-$\IndexSubset$ norm}, denoted by
  \( \CoordinateNormDual{\TripleNorm{\cdot}}{\IndexSubset} \), 
  with expression
  \begin{equation}
    \CoordinateNormDual{\TripleNorm{\dual}}{\IndexSubset}
    =
    \sup_{\IndexSubsetLess \subset \IndexSubset} \TripleNorm{\dual_{\IndexSubsetLess}}_{\IndexSubsetLess,\star} 
    \eqsepv \forall \dual \in \FlatRR_\IndexSubset
    \eqfinv
    \label{eq:dual_coordinate_norm_definition}
  \end{equation}
  where the $\SetStar{\IndexSubset}$-norm \( \TripleNorm{\cdot}_{\IndexSubset,\star} \) is given in
  Definition~\ref{de:K_norm}.

  \label{de:coordinate_norm}
\end{definition}
We adopt the convention \( \CoordinateNormDual{\TripleNorm{\cdot}}{\emptyset} = 0 \).
%
%
We denote the unit sphere and the unit ball
of the local-coordinate-$\IndexSubset$ 
norm~\( \CoordinateNorm{\TripleNorm{\cdot}}{\IndexSubset} \) 
in~\eqref{eq:coordinate_norm_definition} by
\begin{subequations}
  \begin{align}
    \CoordinateNorm{\TripleNormSphere}{\IndexSubset}
    &=
      \defset{\primal \in \FlatRR_\IndexSubset}{%
      \CoordinateNorm{\TripleNorm{\primal}}{\IndexSubset} = 1 } 
      \subset \FlatRR_\IndexSubset
      \eqsepv \forall \IndexSubset \subset\IndexSet
      \eqfinv
      \label{eq:coordinate_norm_unit_sphere}
    \\
    \CoordinateNorm{\TripleNormBall}{\IndexSubset}
    &=
      \defset{\primal \in \FlatRR_\IndexSubset}{%
      \CoordinateNorm{\TripleNorm{\primal}}{\IndexSubset} \leq 1 } 
      \subset \FlatRR_\IndexSubset
      \eqsepv \forall \IndexSubset \subset\IndexSet
      \eqfinp
      \label{eq:coordinate_norm_unit_ball}
  \end{align}
  \label{eq:coordinate_norm_unit_sphere_ball}
\end{subequations}
We denote the unit sphere and the unit ball 
of the dual local-coordinate-$\IndexSubset$ norm
\( \CoordinateNormDual{\TripleNorm{\cdot}}{\IndexSubset} \) 
in~\eqref{eq:dual_coordinate_norm_definition} 
by
\begin{subequations}
  \begin{align}
    \CoordinateNormDual{\TripleNormSphere}{\IndexSubset} 
    &  = 
      \bset{\dual \in \FlatRR_\IndexSubset}{\CoordinateNormDual{\TripleNorm{\dual}}{\IndexSubset} = 1} 
      \subset \FlatRR_\IndexSubset
      \eqsepv \forall \IndexSubset \subset\IndexSet
      \eqfinv
      \label{eq:dual_coordinate_norm_unit_sphere}
    \\
    \CoordinateNormDual{\TripleNormBall}{\IndexSubset} 
    &  = 
      \defset{\dual \in \FlatRR_\IndexSubset}{\CoordinateNormDual{\TripleNorm{\dual}}{\IndexSubset} \leq 1} 
      \subset \FlatRR_\IndexSubset
      \eqsepv \forall \IndexSubset \subset\IndexSet
      \eqfinp
      \label{eq:dual_coordinate_norm_unit_ball}
  \end{align}
\end{subequations}
%

\subsubsubsection{Generalized local-top-$\IndexSubset$ and
local-$\IndexSubset$-support dual~norms}


\begin{definition}
  For any subset \( \IndexSubset \subset \IndexSet \), 
  we call \emph{generalized local-top-$\IndexSubset$ dual~norm}
  (associated with the source norm~$\TripleNorm{\cdot}$)
  the local norm (on the subspace $\FlatRR_\IndexSubset$ in~\eqref{eq:FlatRR}) defined by 
  \begin{equation}
    \TopDualNorm{\TripleNorm{\dual}}{\IndexSubset}
    =
    \sup_{\IndexSubsetLess \subset \IndexSubset} \TripleNormDual{\dual_{\IndexSubsetLess}} 
    =
    \sup_{\IndexSubsetLess \subset \IndexSubset} \TripleNorm{\dual_{\IndexSubsetLess}}_{\star,\IndexSubsetLess}
    \eqsepv \forall \dual \in \FlatRR_\IndexSubset 
    \eqfinv
    \label{eq:top_dual_norm}
  \end{equation}
  where we adopt the convention \( \TopDualNorm{\TripleNorm{\cdot}}{\emptyset} = 0 \).
  We call \emph{generalized local-$\IndexSubset$-support dual norm}
  the local dual norm (on the subspace $\FlatRR_\IndexSubset$) 
of the generalized local-top-$\IndexSubset$ dual~norm, denoted by\footnote{%
    We use the symbol~$\star$ in the superscript to indicate that the generalized
    local-$\IndexSubset$-support dual~norm \( \SupportDualNorm{\TripleNorm{\cdot}}{\IndexSubset} \)
    is a dual norm.}
  \( \SupportDualNorm{\TripleNorm{\cdot}}{\IndexSubset} \): 
  \begin{equation}
    \SupportDualNorm{\TripleNorm{\cdot}}{\IndexSubset} 
    = \bp{ \TopDualNorm{\TripleNorm{\cdot}}{\IndexSubset} }_{\star}
    \eqfinp
    \label{eq:support_dual_norm}
  \end{equation}
  \label{de:top_dual_norm}
\end{definition}

\subsection{\Capra-conjugates of \FSM}
\label{CAPRAC_conjugates_related_to_the_pseudo_norm}

With the Fenchel conjugacy, we calculate that 
\( \LFM{ \delta_{ \SuppLevelSet{\SupportMapping}{\IndexSubset}  } }= 
\delta_{  \{0\} } \) --- 
where \( \IndexSubset \neq \emptyset \) and
\( \delta_{ \SuppLevelSet{\SupportMapping}{\IndexSubset}  } \) is the characteristic function
of the level sets~\eqref{eq:support_mapping_level_set}
as defined in~\eqref{eq:characteristic_function} --- 
and, more generally, that 
\( \LFM{ \np{\FunctionBigF\circ\SupportMapping } } =
\sup \ba{ -\FunctionBigF\np{\emptyset}, 
  \delta_{\{0\}} \LowPlus \bp{ -\inf_{\IndexSubset \neq
      \emptyset}\FunctionBigF\np{\IndexSubset} } } \),
for any set function~$\FunctionBigF$ as in~\eqref{eq:FunctionBigF}.
Hence, the Fenchel conjugacy is not suitable
to handle \FSM.
By contrast, we will now show that we obtain more interesting formulas with the \Capra-conjugacy.

\begin{proposition}
   Let $\TripleNorm{\cdot}$ be the source norm with 
associated coupling $\CouplingCapra$, as in Definition~\ref{de:coupling_CAPRA}.
Let \( \FunctionBigF : 2^\IndexSet \to \barRR \) be a set function.

We have
  \begin{equation}
    \SFM{\Bp{\FunctionBigF\circ\mathrm{supp}}}{\CouplingCapra}\np{\dual} 
    =  \sup_{\IndexSubset \subset \IndexSet }
    \Bp{ \CoordinateNormDual{\TripleNorm{\dual_\IndexSubset}}{\IndexSubset} -
      \FunctionBigF\np{\IndexSubset} }
    \eqsepv \forall \dual \in \RR^d 
    \eqfinv
    \label{eq:CAPRA_conjugate_function_of_the_Support}
  \end{equation}
  where the family 
  \( \sequence{\CoordinateNormDual{\TripleNorm{\cdot}}{\IndexSubset}}{\IndexSubset\subset \IndexSet} \) 
  of dual local-coordinate-$\IndexSubset$ norms is as in
  Definition~\ref{de:coordinate_norm}, and with the convention
  \(\CoordinateNormDual{\TripleNorm{\dual_\emptyset}}{\emptyset} =0 \).

  If, in addition, the norm $\TripleNorm{\cdot}$ is orthant-monotonic, then we
  have that 
  \begin{equation}
    \SFM{ \np{ \FunctionBigF \circ \SupportMapping } }{\CouplingCapra} 
    =
    \sup_{\IndexSubset\subset \IndexSet} \Bc{ \TopDualNorm{\TripleNorm{\cdot}}{\IndexSubset}-\FunctionBigF\np{\IndexSubset} }  
    \eqfinv
    \label{eq:conjugate_l0norm_TopDualNorm}        
  \end{equation}
  where the family 
 \( \sequence{\TopDualNorm{\TripleNorm{\cdot}}{\IndexSubset}}{\IndexSubset \subset \IndexSet} \)
  of generalized local-top-$\IndexSubset$ dual~norms is 
  as in Definition~\ref{de:top_dual_norm}, and
  with the convention that \( \TopDualNorm{\TripleNorm{\cdot}}{\emptyset} = 0
  \).

  \label{pr:support_mapping_conjugate}
\end{proposition}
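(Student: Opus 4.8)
The plan is to unfold the definition~\eqref{eq:Fenchel-Moreau_conjugate} of the $\CouplingCapra$-Fenchel-Moreau conjugate and to exploit the fact that \emph{both} the coupling~$\CouplingCapra$ (by construction, see~\eqref{eq:coupling_CAPRA}) and the function $\FunctionBigF\circ\SupportMapping$ (by~\eqref{eq:support_mapping_is_0-homogeneous}) are constant along primal rays. First I would split the supremum $\sup_{\primal\in\RR^d}$ into the contribution of $\primal=0$, which equals $-\FunctionBigF\np{\emptyset}$ since $\CouplingCapra\np{0,\dual}=0$, and the contribution of $\primal\neq0$. For $\primal\neq0$, writing $\primal=\TripleNorm{\primal}\,\normalized\np{\primal}$ with $\normalized\np{\primal}\in\TripleNormSphere$ (see~\eqref{eq:normalization_mapping}), formula~\eqref{eq:coupling_CAPRA} gives $\CouplingCapra\np{\primal,\dual}=\proscal{\normalized\np{\primal}}{\dual}$ while~\eqref{eq:support_mapping_is_0-homogeneous} gives $\SupportMapping\np{\primal}=\SupportMapping\np{\normalized\np{\primal}}$; hence that contribution reduces to $\sup_{\primal\in\TripleNormSphere}\bp{\proscal{\primal}{\dual}\LowPlus\np{-\FunctionBigF\np{\SupportMapping\np{\primal}}}}$. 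Since every point of $\TripleNormSphere$ has a nonempty support, slicing $\TripleNormSphere$ along the level curves of~$\SupportMapping$ rewrites this as $\sup_{\emptyset\neq\IndexSubset\subset\IndexSet}\bp{s_{\IndexSubset}\np{\dual}\LowPlus\np{-\FunctionBigF\np{\IndexSubset}}}$, where $s_{\IndexSubset}\np{\dual}=\sup\bset{\proscal{\primal}{\dual}}{\primal\in\TripleNormSphere,\ \SupportMapping\np{\primal}=\IndexSubset}$. As each $s_{\IndexSubset}\np{\dual}$ is a finite real number, each lower addition above is an ordinary extended sum, so the plain minus sign of~\eqref{eq:CAPRA_conjugate_function_of_the_Support} is unambiguous.

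The heart of the proof is then the identification $s_{\IndexSubset}\np{\dual}=\CoordinateNormDual{\TripleNorm{\dual_{\IndexSubset}}}{\IndexSubset}$ for every $\emptyset\neq\IndexSubset\subset\IndexSet$. Unfolding the definition~\eqref{eq:dual_coordinate_norm_definition} of the dual local-coordinate-$\IndexSubset$ norm, then expressing each $\SetStar{\IndexSubsetLess}$-norm $\TripleNorm{\dual_{\IndexSubsetLess}}_{\IndexSubsetLess,\star}$ as a support function by~\eqref{eq:K_star=sigma}, then using the self-duality~\eqref{eq:orthogonal_projection_self-dual} of the orthogonal projections (so that $\proscal{\primal}{\dual_{\IndexSubsetLess}}=\proscal{\primal}{\dual}$ whenever $\primal\in\FlatRR_{\IndexSubsetLess}$), and finally $\bigcup_{\IndexSubsetLess\subset\IndexSubset}\FlatRR_{\IndexSubsetLess}=\FlatRR_{\IndexSubset}$, I obtain $\CoordinateNormDual{\TripleNorm{\dual_{\IndexSubset}}}{\IndexSubset}=\sup\bset{\proscal{\primal}{\dual}}{\primal\in\FlatRR_{\IndexSubset}\cap\TripleNormSphere}$, the feasible set being $\bset{\primal\in\TripleNormSphere}{\SupportMapping\np{\primal}\subset\IndexSubset}$ by~\eqref{eq:support_mapping_level_set_FlatRR}. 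It then remains to replace ``$\subset\IndexSubset$'' by ``$=\IndexSubset$'', and this is the step I expect to be the main obstacle: the stratum $\bset{\primal\in\TripleNormSphere}{\SupportMapping\np{\primal}=\IndexSubset}$ is not closed, but it is \emph{dense} in the compact set $\FlatRR_{\IndexSubset}\cap\TripleNormSphere$ --- given a point of the latter, perturb the components lying in~$\IndexSubset$ but outside its support to small nonzero values and apply the normalization map~$\normalized$, which is continuous on $\RR^d\setminus\na{0}$ and preserves supports --- so the continuous linear form $\proscal{\cdot}{\dual}$ has the same supremum over the two sets. This proves $s_{\IndexSubset}\np{\dual}=\CoordinateNormDual{\TripleNorm{\dual_{\IndexSubset}}}{\IndexSubset}$; reinstating the $\IndexSubset=\emptyset$ term $-\FunctionBigF\np{\emptyset}$ under the convention $\CoordinateNormDual{\TripleNorm{\dual_{\emptyset}}}{\emptyset}=0$ then yields~\eqref{eq:CAPRA_conjugate_function_of_the_Support}.

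For the second formula I would only need the identity $\CoordinateNormDual{\TripleNorm{\cdot}}{\IndexSubset}=\TopDualNorm{\TripleNorm{\cdot}}{\IndexSubset}$ on $\FlatRR_{\IndexSubset}$, for every $\IndexSubset\subset\IndexSet$, under the extra hypothesis that $\TripleNorm{\cdot}$ is orthant-monotonic. Comparing~\eqref{eq:dual_coordinate_norm_definition} with~\eqref{eq:top_dual_norm}, the sole difference is that $\CoordinateNormDual{\TripleNorm{\cdot}}{\IndexSubset}$ is assembled from the $\SetStar{\IndexSubsetLess}$-norms $\TripleNorm{\cdot}_{\IndexSubsetLess,\star}$ (restrict, then dualize) whereas $\TopDualNorm{\TripleNorm{\cdot}}{\IndexSubset}$ is assembled from the $\StarSet{\IndexSubsetLess}$-norms $\TripleNorm{\cdot}_{\star,\IndexSubsetLess}$ (dualize, then restrict), in the sense of Definition~\ref{de:K_norm}. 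Under orthant-monotonicity of~$\TripleNorm{\cdot}$ these two norms coincide on each subspace $\FlatRR_{\IndexSubsetLess}$: the inequality $\TripleNorm{\cdot}_{\IndexSubsetLess,\star}\le\TripleNorm{\cdot}_{\star,\IndexSubsetLess}$ always holds, and the reverse one holds because, for $\eta\in\FlatRR_{\IndexSubsetLess}$ and $\TripleNorm{\primal}\le1$, one has $\proscal{\primal}{\eta}=\proscal{\pi_{\IndexSubsetLess}\np{\primal}}{\eta}$ by~\eqref{eq:orthogonal_projection_self-dual} with $\TripleNorm{\pi_{\IndexSubsetLess}\np{\primal}}\le\TripleNorm{\primal}\le1$ by orthant-monotonicity --- this is \cite[Proposition~\ref{OSM-pr:orthant-monotonic}]{Chancelier-DeLara:2020_OSM}. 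Substituting $\CoordinateNormDual{\TripleNorm{\cdot}}{\IndexSubset}=\TopDualNorm{\TripleNorm{\cdot}}{\IndexSubset}$ (with the two conventions agreeing for $\IndexSubset=\emptyset$) into~\eqref{eq:CAPRA_conjugate_function_of_the_Support} gives~\eqref{eq:conjugate_l0norm_TopDualNorm}, which completes the plan.
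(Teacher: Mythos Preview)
Your proposal is correct and follows essentially the same approach as the paper: reduce to the unit sphere by $0$-homogeneity, slice along the level curves of~$\SupportMapping$, identify $\sup\bset{\proscal{\primal}{\dual}}{\primal\in\TripleNormSphere,\ \SupportMapping\np{\primal}=\IndexSubset}$ with $\CoordinateNormDual{\TripleNorm{\dual_\IndexSubset}}{\IndexSubset}$ via the density of the level curve in $\FlatRR_{\IndexSubset}\cap\TripleNormSphere$, and then invoke $\TripleNorm{\cdot}_{\IndexSubsetLess,\star}=\TripleNorm{\cdot}_{\star,\IndexSubsetLess}$ under orthant-monotonicity. The only difference is packaging: the paper factors these steps through an auxiliary lemma (Lemma~\ref{le:capra_support_and_delta}, stated for a general constraint set~$\Gamma$) and appendix propositions (the density argument is in the proof of~\eqref{eq:dual_coordinate_norm}, and the norm equality in Proposition~\ref{pr:dual_coordinate-k_norm_=_generalized_top-k_norm}), whereas you carry them out inline.
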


\begin{proof}
  To prove~\eqref{eq:CAPRA_conjugate_function_of_the_Support}, we apply the postponed
  Lemma~\ref{le:capra_support_and_delta},
  as well as results about the dual local-coordinate-$\IndexSubset$ norms~\(
  \CoordinateNormDual{\TripleNorm{\cdot}}{\IndexSubset} \)
  gathered in Sect.~\ref{Coordinate-k_and_dual_coordinate-k_norms}. 
  For any $\dual \in \RR^d$, we have
  \begin{align*}
    \SFM{ \bp{\FunctionBigF\circ\SupportMapping} }{\CouplingCapra}\np{\dual}
    &=
      \sup_{\IndexSubset \subset \IndexSet } \Bp{ 
      \sigma_{ \TripleNormSphere \cap (\SuppLevelCurve{\SupportMapping}{\IndexSubset})} \np{\dual} 
      \LowPlus \bp{-\FunctionBigF\np{\IndexSubset}} }
      \tag{Lemma~\ref{le:capra_support_and_delta} with $\Gamma = \RR^d$}
    \\
    &=
      \sup_{\IndexSubset \subset \IndexSet }
      \Bp{ \CoordinateNormDual{\TripleNorm{\dual_\IndexSubset}}{\IndexSubset} - \FunctionBigF\np{\IndexSubset}}
      \tag{as \( \sigma_{\SuppLevelCurve{\SupportMapping}{\IndexSubset} \cap \TripleNormSphere}\np{\dual}  =
      \sigma_{\SuppLevelCurve{\SupportMapping}{\IndexSubset} \cap \TripleNormSphere}\np{\dual_\IndexSubset} =
      \CoordinateNormDual{\TripleNorm{\dual_\IndexSubset}}{\IndexSubset} \)
      by~\eqref{eq:dual_coordinate_norm} } 
     \eqfinp
  \end{align*}
If the norm~$\TripleNorm{\cdot}$ is orthant-monotonic,
we have that 
 \( \CoordinateNormDual{\TripleNorm{\cdot}}{\IndexSubset}
  =
  \TopDualNorm{\TripleNorm{\cdot}}{\IndexSubset} \)
by Proposition~\ref{pr:dual_coordinate-k_norm_=_generalized_top-k_norm}.

  This ends the proof.
\end{proof}

\begin{lemma}
  \label{le:capra_support_and_delta}
  Let $\TripleNorm{\cdot}$ be the source norm with 
associated coupling $\CouplingCapra$, as in Definition~\ref{de:coupling_CAPRA}.

  For any set function $\FunctionBigF: 2^\IndexSet \to \barRR$ and any $\Gamma \subset \RR^d$
  such that
  $0\in \Gamma$ and $\normalized\np{\Gamma \cap \SuppLevelCurve{\SupportMapping}{\IndexSubset}}
  =\Gamma \cap \normalized\np{\SuppLevelCurve{\SupportMapping}{\IndexSubset}}$
  for all $\IndexSubset\subset \IndexSet$,
  where the normalization mapping~$\normalized$ has been defined in~\eqref{eq:normalization_mapping},
  we have
  \begin{equation}
    \SFM{ \bp{\FunctionBigF\circ\SupportMapping \UppPlus \delta_{\Gamma}} }{\CouplingCapra}\np{\dual}
    =
    \sup_{\IndexSubset \subset \IndexSet }
    \Bp{ 
      \sigma_{ \TripleNormSphere\cap\Gamma \cap (\SuppLevelCurve{\SupportMapping}{\IndexSubset})} \np{\dual} 
      \LowPlus \bp{-\FunctionBigF\np{\IndexSubset}} }
    \eqsepv \forall \dual \in \RR^d 
    \eqfinp
    \label{eq:conjugate_support_plus_delta}
  \end{equation}
\end{lemma}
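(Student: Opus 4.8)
The plan is to compute $\SFM{(\FunctionBigF\circ\SupportMapping \UppPlus \delta_\Gamma)}{\CouplingCapra}$ directly from the definition~\eqref{eq:Fenchel-Moreau_conjugate}, grouping the supremum over $\primal\in\RR^d$ according to the support $\SupportMapping(\primal)$. First I would write
\[
\SFM{(\FunctionBigF\circ\SupportMapping \UppPlus \delta_\Gamma)}{\CouplingCapra}(\dual)
= \sup_{\primal\in\RR^d}\Bp{\CouplingCapra(\primal,\dual)\LowPlus\bp{-\FunctionBigF(\SupportMapping(\primal))\UppPlus\delta_\Gamma(\primal)}}
= \sup_{\IndexSubset\subset\IndexSet}\;\sup_{\primal\in\Gamma\cap\SuppLevelCurve{\SupportMapping}{\IndexSubset}}\Bp{\CouplingCapra(\primal,\dual)\LowPlus\bp{-\FunctionBigF(\IndexSubset)}},
\]
using that $\{\SuppLevelCurve{\SupportMapping}{\IndexSubset}\}_{\IndexSubset\subset\IndexSet}$ partitions $\RR^d$ and that $\delta_\Gamma$ restricts the inner supremum to $\Gamma$. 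The case $\IndexSubset=\emptyset$ contributes the single point $\primal=0$ (which lies in $\Gamma$ by hypothesis), giving the term $\CouplingCapra(0,\dual)\LowPlus(-\FunctionBigF(\emptyset)) = -\FunctionBigF(\emptyset)$; this is consistent with the claimed right-hand side since $\sigma_{\TripleNormSphere\cap\Gamma\cap\SuppLevelCurve{\SupportMapping}{\emptyset}}(\dual)=\sigma_{\{0\}}(\dual)=0$ (note $0\in\TripleNormSphere$ fails, but $\SuppLevelCurve{\SupportMapping}{\emptyset}=\{0\}$ forces the intersection to be $\{0\}$ only if $0\in\TripleNormSphere$; more carefully, the inner sup over $\primal\in\Gamma\cap\SuppLevelCurve{\SupportMapping}{\emptyset}=\{0\}$ directly equals $-\FunctionBigF(\emptyset)$, which I would record separately and check it matches the support-function convention used on the right).

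For $\IndexSubset\neq\emptyset$, the key move is to exploit that $\CouplingCapra$ is constant along primal rays: for $\primal\neq 0$, $\CouplingCapra(\primal,\dual)=\proscal{\normalized(\primal)}{\dual}$, and moreover $\normalized(\primal)\in\TripleNormSphere$ with $\SupportMapping(\normalized(\primal))=\SupportMapping(\primal)$ since $\normalized$ is a positive rescaling. Hence
\[
\sup_{\primal\in\Gamma\cap\SuppLevelCurve{\SupportMapping}{\IndexSubset}}\CouplingCapra(\primal,\dual)
= \sup_{\primal\in\Gamma\cap\SuppLevelCurve{\SupportMapping}{\IndexSubset}}\proscal{\normalized(\primal)}{\dual}
= \sup_{\primal'\in\normalized(\Gamma\cap\SuppLevelCurve{\SupportMapping}{\IndexSubset})}\proscal{\primal'}{\dual}
= \sigma_{\normalized(\Gamma\cap\SuppLevelCurve{\SupportMapping}{\IndexSubset})}(\dual).
\]
This is exactly where the hypothesis $\normalized(\Gamma\cap\SuppLevelCurve{\SupportMapping}{\IndexSubset})=\Gamma\cap\normalized(\SuppLevelCurve{\SupportMapping}{\IndexSubset})$ is needed: it lets me rewrite the image set. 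Then I would observe $\normalized(\SuppLevelCurve{\SupportMapping}{\IndexSubset})\subset\TripleNormSphere$ and in fact $\Gamma\cap\normalized(\SuppLevelCurve{\SupportMapping}{\IndexSubset})=\TripleNormSphere\cap\Gamma\cap\SuppLevelCurve{\SupportMapping}{\IndexSubset}$ for $\IndexSubset\neq\emptyset$ — because a vector $\primal'\in\TripleNormSphere$ already satisfies $\normalized(\primal')=\primal'$, so $\primal'\in\normalized(\SuppLevelCurve{\SupportMapping}{\IndexSubset})$ iff $\primal'\in\SuppLevelCurve{\SupportMapping}{\IndexSubset}$; combined with $\primal'\in\Gamma$ this gives the stated intersection. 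Thus the inner supremum equals $\sigma_{\TripleNormSphere\cap\Gamma\cap\SuppLevelCurve{\SupportMapping}{\IndexSubset}}(\dual)$, and reassembling over all $\IndexSubset\subset\IndexSet$ yields~\eqref{eq:conjugate_support_plus_delta}.

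The main obstacle is bookkeeping around the empty set and the interplay of the Moreau additions $\LowPlus$/$\UppPlus$ with the $\pm\infty$ values that $\FunctionBigF$ may take: I need $\delta_\Gamma$ to be added with $\UppPlus$ inside the conjugate so that the $-$ in front turns it into a $\LowPlus$ with $\delta_\Gamma$, correctly forcing $\primal\in\Gamma$ while respecting $\np{+\infty}\UppPlus\np{-\infty}=+\infty$; and when $\FunctionBigF(\IndexSubset)=+\infty$ the term $-\FunctionBigF(\IndexSubset)$ contributes $-\infty$ via $\LowPlus$ on both sides, so those $\IndexSubset$ drop out identically. I would also verify that the partition-of-$\RR^d$ step commutes with the supremum — this is just $\sup$ over a disjoint union — and handle the degenerate case $\Gamma\cap\SuppLevelCurve{\SupportMapping}{\IndexSubset}=\emptyset$ for some $\IndexSubset$, where both sides give $-\infty$ for that index (empty sup, and $\sigma_\emptyset\equiv-\infty$), so the equality persists. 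None of these steps is deep; the content is the ray-constancy of $\CouplingCapra$ plus the commutation hypothesis on $\normalized$, and the rest is careful case analysis.
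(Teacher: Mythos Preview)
Your approach is essentially the paper's: both partition $\RR^d$ by the level curves $\SuppLevelCurve{\SupportMapping}{\IndexSubset}$, reduce the inner supremum to a support function of $\normalized(\Gamma\cap\SuppLevelCurve{\SupportMapping}{\IndexSubset})$, invoke the commutation hypothesis on $\normalized$, and then identify the image set with $\TripleNormSphere\cap\Gamma\cap\SuppLevelCurve{\SupportMapping}{\IndexSubset}$. The paper packages the first two moves via the representation $\FunctionBigF\circ\SupportMapping=\inf_{\IndexSubset}\bp{\delta_{\SuppLevelCurve{\SupportMapping}{\IndexSubset}}\UppPlus\FunctionBigF(\IndexSubset)}$ together with the formula $\SFM{\delta_S}{\CouplingCapra}=\sigma_{\normalized(S)}$, but this is exactly the computation you sketch directly.

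The one place you leave open is the $\IndexSubset=\emptyset$ bookkeeping you flag, and that is also where the paper's argument does something you do not. The paper writes $\normalized(\SuppLevelCurve{\SupportMapping}{\IndexSubset})=\{0\}\cup\bp{\TripleNormSphere\cap\SuppLevelCurve{\SupportMapping}{\IndexSubset}}$ uniformly in~$\IndexSubset$, uses $0\in\Gamma$ to obtain $\sigma_{\{0\}\cup(\Gamma\cap\TripleNormSphere\cap\SuppLevelCurve{\SupportMapping}{\IndexSubset})}=\sup\bp{0,\sigma_{\Gamma\cap\TripleNormSphere\cap\SuppLevelCurve{\SupportMapping}{\IndexSubset}}}$, and then drops the outer $\sup(0,\cdot)$ on the grounds that the support function is nonnegative. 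Your direct identification $\Gamma\cap\normalized(\SuppLevelCurve{\SupportMapping}{\IndexSubset})=\TripleNormSphere\cap\Gamma\cap\SuppLevelCurve{\SupportMapping}{\IndexSubset}$ is clean for $\IndexSubset\neq\emptyset$, but for $\IndexSubset=\emptyset$ your left-hand contribution is $-\FunctionBigF(\emptyset)$ while the right-hand term is $\sigma_{\emptyset}\LowPlus\bp{-\FunctionBigF(\emptyset)}=-\infty$; closing the argument requires showing that $-\FunctionBigF(\emptyset)$ is absorbed by the supremum over $\IndexSubset\neq\emptyset$, which is precisely the role of the paper's $\sup(0,\cdot)$ device and the nonnegativity claim. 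So your plan is correct and matches the paper's route, but it is incomplete at exactly the point you suspected; you should supply that step rather than defer it.
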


\begin{proof}
  Using the level curves of the $\SupportMapping$ mapping
  in~\eqref{eq:support_mapping_level_curve},
  we obtain a representation of the function $\FunctionBigF\circ\SupportMapping$ as
  \begin{equation}
    \FunctionBigF\circ \SupportMapping =
    \inf_{\IndexSubset \subset \IndexSet } \bp{ \delta_{\SuppLevelCurve{\SupportMapping}{\IndexSubset}}  \UppPlus 
      \FunctionBigF\np{\IndexSubset}}
    \eqfinp 
    \label{eq:level_set_support}
  \end{equation}
  For all $\dual \in \RR^d$, we have
  \begin{align*}
    \SFM{ \Bp{\FunctionBigF
    \circ\SupportMapping \, \UppPlus \delta_{\Gamma}} }{\CouplingCapra}\np{\dual}
    &= 
      \SFM{ \Bp{ \inf_{ \IndexSubset \subset \IndexSet } 
      \bp{ \delta_{\np{\SuppLevelCurve{\SupportMapping}{\IndexSubset}}}
      \UppPlus \FunctionBigF\np{\IndexSubset} }
      \UppPlus \delta_{\Gamma}
      } }%
      {\CouplingCapra}\np{\dual} 
      \tag{using Equation~\eqref{eq:level_set_support}}
    \\
    &= 
      \SFM{ \Bp{ \inf_{ \IndexSubset \subset \IndexSet } 
      \bp{ \delta_{\Gamma} \UppPlus \delta_{\np{\SuppLevelCurve{\SupportMapping}{\IndexSubset}}}
      \UppPlus \FunctionBigF\np{\IndexSubset} }
      } }%
      {\CouplingCapra}\np{\dual} 
      \intertext{as $\inf_{K}\varphi(K) \UppPlus r = \inf_{K}\bp{\varphi(K)
      \UppPlus r}$, for any set function~$\varphi : 2^\IndexSet \to \barRR$ and any $r \in \barRR$ \cite{Moreau:1970}}
    &= 
      \SFM{ \Bp{ \inf_{ \IndexSubset \subset \IndexSet } 
      \bp{ \delta_{\Gamma\cap\np{\SuppLevelCurve{\SupportMapping}{\IndexSubset}}}
      \UppPlus \FunctionBigF\np{\IndexSubset} }
      } }%
      {\CouplingCapra}\np{\dual} 
      \tag{as \( \delta_{\Gamma} \UppPlus
      \delta_{\np{\SuppLevelCurve{\SupportMapping}{\IndexSubset}}}
      = \delta_{\Gamma\cap\np{\SuppLevelCurve{\SupportMapping}{\IndexSubset}}} \)}
    \\
    &= 
      \sup_{\IndexSubset \subset \IndexSet }
      \SFM{ \Bp{ \delta_{\Gamma\cap\np{\SuppLevelCurve{\SupportMapping}{\IndexSubset}}}
      \UppPlus \bp{-\FunctionBigF\np{\IndexSubset}}
      }}{\CouplingCapra}\np{\dual} 
 \intertext{as conjugacies, being dualities, turn infima into suprema}
    &= 
      \sup_{\IndexSubset \subset \IndexSet }
      \Bp{ 
      \SFM{ \delta_{\Gamma\cap\np{\SuppLevelCurve{\SupportMapping}{\IndexSubset}}} }{\CouplingCapra}\np{\dual} 
      \LowPlus \bp{-\FunctionBigF\np{\IndexSubset}} }  
      \tag{by property of conjugacies}
    \\
    &= 
      \sup_{\IndexSubset \subset \IndexSet }
      \Bp{ 
      \sigma_{ \normalized\np{\Gamma\cap\SuppLevelCurve{\SupportMapping}{\IndexSubset}}}\np{\dual} 
      \LowPlus \bp{-\FunctionBigF\np{\IndexSubset}} }  
      \tag{as \( \SFM{ \delta_{S} }{\CouplingCapra}
      = \sigma_{ \normalized\np{S}}  \) for any subset $S\subset \RR^d$,
      by~\eqref{eq:one-sided_linear_Fenchel-Moreau_characteristic}}
    \\
    &=
      \sup_{\IndexSubset \subset \IndexSet }
      \Bp{ 
      \sigma_{ \Gamma \cap \normalized\np{\SuppLevelCurve{\SupportMapping}{\IndexSubset}}}\np{\dual} 
      \LowPlus \bp{-\FunctionBigF\np{\IndexSubset}} }  
      \tag{because $\normalized\np{\Gamma \cap \SuppLevelCurve{\SupportMapping}{\IndexSubset}}
  =\Gamma \cap \normalized\np{\SuppLevelCurve{\SupportMapping}{\IndexSubset}}$ by assumption}
    \\
    &=
      \sup_{\IndexSubset \subset \IndexSet } \Bp{ 
      \sigma_{ \Gamma \cap \bp{\{0\} \cup \np{ \TripleNormSphere
      \cap\SuppLevelCurve{\SupportMapping}{\IndexSubset} } } }\np{\dual}  
      \LowPlus \bp{-\FunctionBigF\np{\IndexSubset}} }
      \tag{as \(
      \normalized\np{\SuppLevelCurve{\SupportMapping}{\IndexSubset}}=
      \{0\} \cup \bp{ \TripleNormSphere \cap \SuppLevelCurve{\SupportMapping}{\IndexSubset}} \)
      by~\eqref{eq:normalization_mapping}}
    \\
    &=
      \sup_{\IndexSubset \subset \IndexSet } \Bp{ 
      \sup\bp{0, \sigma_{ \Gamma \cap \TripleNormSphere \cap\SuppLevelCurve{\SupportMapping}{\IndexSubset} }}\np{\dual}  
      \LowPlus \bp{-\FunctionBigF\np{\IndexSubset}} }
      \intertext{as 
      \( \Gamma \cap \bp{\{0\} \cup \np{\TripleNormSphere
      \cap\SuppLevelCurve{\SupportMapping}{\IndexSubset} }}
      = \{0\} \cup \bp{ \Gamma \cap \TripleNormSphere \cap
      \SuppLevelCurve{\SupportMapping}{\IndexSubset}} \) since \( 0 \in \Gamma
      \) by assumption}
    &= 
      \sup_{\IndexSubset \subset \IndexSet } \Bp{ 
      \sigma_{ \TripleNormSphere\cap\Gamma \cap \SuppLevelCurve{\SupportMapping}{\IndexSubset}} \np{\dual} 
      \LowPlus \bp{-\FunctionBigF\np{\IndexSubset}} }
      \tag{as \( \sigma_{ \TripleNormSphere \cap\SuppLevelCurve{\SupportMapping}{\IndexSubset}} \geq 0 \) }
      \eqfinp
  \end{align*}
  This ends the proof.
\end{proof}

\subsection{\Capra-subdifferentials of \FSM}
\label{Capra-subdifferentials_related_to_the_function_of_the_support_mapping}

We now show that \FSM\ 
display \Capra-subdifferentials, as in~\eqref{eq:Capra-subdifferential_b}, that are related to 
the family of dual local-coordinate-$\IndexSubset$ norms, in Definition~\ref{de:coordinate_norm},
as follows. 
For this purpose, 
we recall that the \emph{normal cone}~$\NORMAL_{\Convex}(\primal)$ 
to the (nonempty) closed convex subset~${\Convex} \subset \RR^d $
at~$\primal \in \Convex$ is the closed convex cone defined by 
\cite[p.136]{Hiriart-Urruty-Lemarechal-I:1993}
\begin{equation}
  \NORMAL_{\Convex}(\primal) =
  \bset{ \dual \in \RR^d }%
  {
    \proscal{\primal'-\primal}{\dual} \leq 0 \eqsepv 
    \forall \primal' \in \Convex
  }
  \eqfinp
  \label{eq:normal_cone}
\end{equation}

\begin{proposition}
  \label{pr:support_mapping_subdifferential_varphi}
  Let $\TripleNorm{\cdot}$ be the source norm with 
associated coupling $\CouplingCapra$, as in Definition~\ref{de:coupling_CAPRA},
and with associated families
  \( \sequence{\CoordinateNorm{\TripleNorm{\cdot}}{\IndexSubset}}{\IndexSubset\subset \IndexSet} \)
  of local-coordinate-$\IndexSubset$ norms 
and 
  \( \sequence{\CoordinateNormDual{\TripleNorm{\cdot}}{\IndexSubset}}{\IndexSubset\subset \IndexSet} \)
  of dual local-coordinate-$\IndexSubset$ norms, as in Definition~\ref{de:coordinate_norm}. 
  
  We consider a set function \( \FunctionBigF : 2^{\IndexSet} \to \barRR \)
  and a vector $\primal \in \RR^d$.
  \begin{subequations}
    \begin{itemize}
    \item 
      The \Capra-subdifferential, as in~\eqref{eq:Capra-subdifferential_zero}, of the function
      \( \FunctionBigF \circ \SupportMapping \) at~$\primal=0$ is given by 
      \begin{equation}
        \partial_{\CouplingCapra}\np{ \FunctionBigF \circ \SupportMapping}\np{0}
        = \bigcap_{ \IndexSubset \subset \IndexSet } 
        \bc{ \FunctionBigF\np{\IndexSubset} \UppPlus \bp{-\FunctionBigF\np{\emptyset} } } 
        \CoordinateNormDual{\TripleNormBall}{\IndexSubset} 
        \eqfinv
        \label{eq:support_mapping_subdifferential_at0}
      \end{equation}
      where, by convention, 
      \( \lambda \CoordinateNormDual{\TripleNormBall}{\IndexSubset} =\emptyset \),
      for any \( \lambda \in [-\infty,0[ \), and 
      \( +\infty\CoordinateNormDual{\TripleNormBall}{\IndexSubset} =\RR^d \).
    \item 
      The \Capra-subdifferential, as in~\eqref{eq:Capra-subdifferential_neq_zero}, of the function
      \( \FunctionBigF \circ \SupportMapping \) at~$\primal\not=0$ is given by
      the following cases
      \begin{itemize}
      \item 
        if \( L=\SupportMapping\np{\primal} \not=\emptyset \) 
        and either \( \FunctionBigF\np{L}=-\infty \)
        or \( \FunctionBigF \equiv +\infty \), 
        then \( \partial_{\CouplingCapra}\np{ \FunctionBigF \circ \SupportMapping}\np{\primal} =\RR^d\),
      \item 
        if \( L=\SupportMapping\np{\primal} \not=\emptyset  \) 
        and \( \FunctionBigF\np{L}=+\infty \) and there exists 
        \( \IndexSubset \subset \IndexSet \) such that 
        \( \FunctionBigF\np{\IndexSubset} \not= +\infty \), then 
        \( \partial_{\CouplingCapra}\np{ \FunctionBigF \circ \SupportMapping}\np{\primal} =\emptyset \),
      \item 
        if \( L=\SupportMapping\np{\primal} \not=\emptyset \) and \( -\infty < \FunctionBigF\np{L} < +\infty \), then 
        \begin{equation}
          \dual \in \partial_{\CouplingCapra}\np{ \FunctionBigF \circ \SupportMapping}\np{\primal} 
          \iff 
          \begin{cases}
            \dual \in 
            \NORMAL_{ \CoordinateNorm{\TripleNormBall}{L} }
            \np{\frac{ \primal }{ \CoordinateNorm{ \TripleNorm{\primal} }{L} } }
               \mtext{ and }
            \\
            L \in \argmax_{\IndexSubset\subset \IndexSet} 
            \bc{
              \CoordinateNormDual{\TripleNorm{\dual}}{\IndexSubset}-\FunctionBigF\np{\IndexSubset}
            }
          \eqfinp 
          \end{cases}
          \label{eq:support_mapping_subdifferential}
        \end{equation}
        %
      \end{itemize}
    \end{itemize}
  \end{subequations}
\end{proposition}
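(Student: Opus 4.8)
The plan is to substitute the explicit \Capra-conjugate formula of Proposition~\ref{pr:support_mapping_conjugate} into the description of the \Capra-subdifferential recalled in~\eqref{eq:Capra-subdifferential_b}, specialized in~\eqref{eq:Capra-subdifferential_zero} to $\primal=0$ and in~\eqref{eq:Capra-subdifferential_neq_zero} to $\primal\ne0$. Write $f=\FunctionBigF\circ\SupportMapping$; by Definition~\ref{de:coupling_CAPRA} and~\eqref{eq:normalization_mapping} one has $\CouplingCapra\np{\primal,\dual}=\proscal{\normalized\np{\primal}}{\dual}$ for all $\primal,\dual\in\RR^d$, by~\eqref{eq:CAPRA_conjugate_function_of_the_Support} one has $\SFM{f}{\CouplingCapra}\np{\dual}=\sup_{\IndexSubset\subset\IndexSet}\bp{\CoordinateNormDual{\TripleNorm{\dual_\IndexSubset}}{\IndexSubset}-\FunctionBigF\np{\IndexSubset}}$, and the generalized Fenchel--Young equality says that $\dual\in\partial_{\CouplingCapra}f\np{\primal}$ if and only if $\primal$ attains the supremum defining $\SFM{f}{\CouplingCapra}\np{\dual}$, i.e.\ $\SFM{f}{\CouplingCapra}\np{\dual}\le\CouplingCapra\np{\primal,\dual}\LowPlus\bp{-f\np{\primal}}$. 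For $\primal=0$, where $\CouplingCapra\np{0,\dual}=0$ and $f\np{0}=\FunctionBigF\np{\emptyset}$, the membership reduces to $\CoordinateNormDual{\TripleNorm{\dual_\IndexSubset}}{\IndexSubset}\le\FunctionBigF\np{\IndexSubset}\UppPlus\bp{-\FunctionBigF\np{\emptyset}}$ for every $\IndexSubset\subset\IndexSet$; reading this inequality with the Moreau additions according to whether $\FunctionBigF\np{\emptyset}$ is finite, $+\infty$ or $-\infty$, and observing that, for each $\IndexSubset$, the $\dual$ satisfying it form precisely the set $\bp{\FunctionBigF\np{\IndexSubset}\UppPlus\bp{-\FunctionBigF\np{\emptyset}}}\CoordinateNormDual{\TripleNormBall}{\IndexSubset}$ under the conventions of the statement (empty when the coefficient is $<0$, since $\CoordinateNormDual{\TripleNorm{\cdot}}{\IndexSubset}\ge0$, and all of~$\RR^d$ when it is $+\infty$), intersecting over $\IndexSubset$ gives~\eqref{eq:support_mapping_subdifferential_at0}.

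For $\primal\ne0$, set $L=\SupportMapping\np{\primal}\ne\emptyset$, so $f\np{\primal}=\FunctionBigF\np{L}$, $\normalized\np{\primal}\in\FlatRR_L$ and $\CouplingCapra\np{\primal,\dual}=\proscal{\normalized\np{\primal}}{\dual_L}$, and the membership becomes $\sup_{\IndexSubset\subset\IndexSet}\bp{\CoordinateNormDual{\TripleNorm{\dual_\IndexSubset}}{\IndexSubset}-\FunctionBigF\np{\IndexSubset}}\le\proscal{\normalized\np{\primal}}{\dual_L}\LowPlus\bp{-\FunctionBigF\np{L}}$. If $\FunctionBigF\np{L}=-\infty$ the right-hand side is $+\infty$, and if $\FunctionBigF\equiv+\infty$ both sides are $-\infty$; in either case the inequality holds for all $\dual$, so $\partial_{\CouplingCapra}f\np{\primal}=\RR^d$. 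If $\FunctionBigF\np{L}=+\infty$ while $\FunctionBigF\np{\IndexSubset_0}\ne+\infty$ for some $\IndexSubset_0\subset\IndexSet$, the right-hand side is $-\infty$ whereas $\SFM{f}{\CouplingCapra}\np{\dual}\ge\CoordinateNormDual{\TripleNorm{\dual_{\IndexSubset_0}}}{\IndexSubset_0}-\FunctionBigF\np{\IndexSubset_0}>-\infty$, so $\partial_{\CouplingCapra}f\np{\primal}=\emptyset$; this accounts for the first three alternatives of the statement. In the remaining case $-\infty<\FunctionBigF\np{L}<+\infty$ the membership is the scalar inequality $\sup_{\IndexSubset\subset\IndexSet}\bp{\CoordinateNormDual{\TripleNorm{\dual_\IndexSubset}}{\IndexSubset}-\FunctionBigF\np{\IndexSubset}}\le\proscal{\normalized\np{\primal}}{\dual_L}-\FunctionBigF\np{L}$. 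Taking $\IndexSubset=L$ gives $\CoordinateNormDual{\TripleNorm{\dual_L}}{L}\le\proscal{\normalized\np{\primal}}{\dual_L}$, while the definition of the dual norm on $\FlatRR_L$ and the bound $\CoordinateNorm{\TripleNorm{\normalized\np{\primal}}}{L}\le1$ (from Sect.~\ref{Coordinate-k_and_dual_coordinate-k_norms}) give $\proscal{\normalized\np{\primal}}{\dual_L}\le\CoordinateNorm{\TripleNorm{\normalized\np{\primal}}}{L}\,\CoordinateNormDual{\TripleNorm{\dual_L}}{L}\le\CoordinateNormDual{\TripleNorm{\dual_L}}{L}$; equality throughout forces $\CoordinateNorm{\TripleNorm{\normalized\np{\primal}}}{L}=1$, whence $\normalized\np{\primal}=\primal/\CoordinateNorm{\TripleNorm{\primal}}{L}$ and $\proscal{\normalized\np{\primal}}{\dual}=\CoordinateNormDual{\TripleNorm{\dual_L}}{L}=\sup_{\primal'\in\CoordinateNorm{\TripleNormBall}{L}}\proscal{\primal'}{\dual}$, which by~\eqref{eq:normal_cone} is exactly $\dual\in\NORMAL_{\CoordinateNorm{\TripleNormBall}{L}}\np{\primal/\CoordinateNorm{\TripleNorm{\primal}}{L}}$; substituting this equality back, the scalar inequality reduces to $\CoordinateNormDual{\TripleNorm{\dual_\IndexSubset}}{\IndexSubset}-\FunctionBigF\np{\IndexSubset}\le\CoordinateNormDual{\TripleNorm{\dual_L}}{L}-\FunctionBigF\np{L}$ for all $\IndexSubset\subset\IndexSet$, i.e.\ $L\in\argmax_{\IndexSubset\subset\IndexSet}\bc{\CoordinateNormDual{\TripleNorm{\dual}}{\IndexSubset}-\FunctionBigF\np{\IndexSubset}}$, and conversely these two conditions imply the scalar inequality; this is~\eqref{eq:support_mapping_subdifferential}.

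The routine part is the case analysis on the sign pattern of $\FunctionBigF$ and the arithmetic of the Moreau lower and upper additions. The delicate step is the squeeze $\CoordinateNormDual{\TripleNorm{\dual_L}}{L}\le\proscal{\normalized\np{\primal}}{\dual_L}\le\CoordinateNorm{\TripleNorm{\normalized\np{\primal}}}{L}\,\CoordinateNormDual{\TripleNorm{\dual_L}}{L}\le\CoordinateNormDual{\TripleNorm{\dual_L}}{L}$ in the last case: it simultaneously yields the normal-cone condition and guarantees that the $\TripleNorm{\cdot}$-normalization and the $\CoordinateNorm{\TripleNorm{\cdot}}{L}$-normalization of $\primal$ coincide, which is what lets the normal cone in~\eqref{eq:support_mapping_subdifferential} be taken at $\primal/\CoordinateNorm{\TripleNorm{\primal}}{L}$; this rests on the comparison $\CoordinateNorm{\TripleNorm{\cdot}}{L}\le\TripleNorm{\cdot}_L$ between the local-coordinate-$L$ norm and the $L$-restriction norm gathered in Sect.~\ref{Coordinate-k_and_dual_coordinate-k_norms}.
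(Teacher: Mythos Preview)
Your proof follows the same plan as the paper's: plug the conjugate formula~\eqref{eq:CAPRA_conjugate_function_of_the_Support} into the subdifferential characterization~\eqref{eq:Capra-subdifferential_b}, then do the case analysis on~$\FunctionBigF\np{L}$ and a squeeze argument in the finite case. The only difference is in the squeeze: the paper routes through the restriction-then-dual norm~$\TripleNorm{\dual_L}_{L,\star}$ (using $\TripleNorm{\dual_L}_{L,\star}\le\CoordinateNormDual{\TripleNorm{\dual}}{L}$ from~\eqref{eq:dual_coordinate_norm_definition} and $\proscal{\primal}{\dual}/\TripleNorm{\primal}\le\TripleNorm{\dual_L}_{L,\star}$ from~\eqref{eq:norm_dual_norm_inequality}), whereas you work directly with the dual pair $\bp{\CoordinateNorm{\TripleNorm{\cdot}}{L},\CoordinateNormDual{\TripleNorm{\cdot}}{L}}$. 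Both routes land on the same normal-cone condition.

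One imprecision to fix: your closing comparison ``$\CoordinateNorm{\TripleNorm{\cdot}}{L}\le\TripleNorm{\cdot}_L$'' has the wrong sign. Sect.~\ref{Coordinate-k_and_dual_coordinate-k_norms} gives the opposite inequality in~\eqref{eq:coordinate_norm_inequalities}, and in fact the \emph{equality} $\CoordinateNorm{\TripleNorm{\primal}}{L}=\TripleNorm{\primal}$ for $\primal\in\FlatRR_L$ in~\eqref{eq:coordinate_norm_graded_b}. You should invoke~\eqref{eq:coordinate_norm_graded_b} directly to obtain $\CoordinateNorm{\TripleNorm{\normalized\np{\primal}}}{L}=1$ and hence $\normalized\np{\primal}=\primal/\CoordinateNorm{\TripleNorm{\primal}}{L}$, rather than trying to extract this from the squeeze: when $\dual_L=0$ the chain $0\le 0\le\CoordinateNorm{\TripleNorm{\normalized\np{\primal}}}{L}\cdot 0\le 0$ does not force anything. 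With that one-line correction your argument is complete and matches the paper's.
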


\begin{proof}
  We have
  \begin{align*}
    \dual \in 
    \partial_{\CouplingCapra}\np{ \FunctionBigF \circ \SupportMapping}\np{\primal} 
    &\iff \SFM{ \np{ \FunctionBigF \circ \SupportMapping} }{\CouplingCapra}\np{\dual} 
      = \CouplingCapra\np{\primal, \dual} 
      \LowPlus \bp{ -\np{ \FunctionBigF \circ \SupportMapping}\np{\primal} }
      \tag{by definition~\eqref{eq:Capra-subdifferential_b} of the 
      \Capra-subdifferential }
    \\
    &\iff 
      \sup_{\IndexSubset\subset \IndexSet} \Bc{ \CoordinateNormDual{\TripleNorm{\dual}}{\IndexSubset} -\FunctionBigF\np{\IndexSubset} }  
      = \CouplingCapra\np{\primal, \dual} 
      \LowPlus \bp{ -\np{ \FunctionBigF \circ \SupportMapping}\np{\primal} }
      \tag{as 
      \( \SFM{ \np{ \FunctionBigF \circ \SupportMapping} }{\CouplingCapra}\np{\dual} 
      = \sup_{\IndexSubset\subset \IndexSet} \Bc{ \CoordinateNormDual{\TripleNorm{\dual}}{\IndexSubset} -\FunctionBigF\np{\IndexSubset} } \) 
      by~\eqref{eq:CAPRA_conjugate_function_of_the_Support}
      }
    \\
    &\iff 
      \Bp{ \primal=0 \mtext{ and } 
      \sup_{\IndexSubset\subset \IndexSet} \Bc{
      \CoordinateNormDual{\TripleNorm{\dual}}{\IndexSubset}
      -\FunctionBigF\np{\IndexSubset} } =-\FunctionBigF\np{\emptyset} }
     \tag{by definition~\eqref{eq:coupling_CAPRA} of \(
      \CouplingCapra\np{\primal, \dual} \) when \( \primal=0 \) }
    \\
    & \mtext{ or } 
      \Bp{ \primal \neq 0 \mtext{ and } 
      \sup_{\IndexSubset\subset \IndexSet} \Bc{ \CoordinateNormDual{\TripleNorm{\dual}}{\IndexSubset} -\FunctionBigF\np{\IndexSubset} }
      = \frac{ \proscal{\primal}{\dual} }{ \TripleNorm{\primal} } 
      - \FunctionBigF\bp{\SupportMapping\np{\primal}} 
      } 
      \tag{by definition~\eqref{eq:coupling_CAPRA} of \(
      \CouplingCapra\np{\primal, \dual} \) when \( \primal \not=0 \) }
\eqfinp 
  \end{align*}
    Therefore, on the one hand, we obtain that
    \begin{align*}
      \dual \in \partial_{\CouplingCapra}\np{ \FunctionBigF \circ \SupportMapping}\np{0} 
      &\iff 
        \CoordinateNormDual{\TripleNorm{\dual}}{\IndexSubset}
        -\FunctionBigF\np{\IndexSubset} \leq -\FunctionBigF\np{\emptyset} \eqsepv
        \forall \IndexSubset\subset \IndexSet
        \tag{as \( \CoordinateNormDual{\TripleNorm{\dual}}{\emptyset}=0 \) 
        by convention} 
      \\
      &\iff 
        \CoordinateNormDual{\TripleNorm{\dual}}{\IndexSubset}
        \leq \FunctionBigF\np{\IndexSubset} \UppPlus \bp{-\FunctionBigF\np{\emptyset} } \eqsepv
        \forall \IndexSubset \subset \IndexSet
        \tag{because \( u \LowPlus (-v) \leq w \iff u \leq v \UppPlus w \) for
        all $u$, $v$, $w$ in $\barRR$ \cite{Moreau:1970}}
      \\
      &\iff 
        \dual \in \bigcap_{ \IndexSubset \subset \IndexSet} 
        \bc{ \FunctionBigF\np{\IndexSubset} \UppPlus \bp{-\FunctionBigF\np{\emptyset} } } 
        \CoordinateNormDual{\TripleNormBall}{\IndexSubset} 
        \eqfinv
    \end{align*}
    where, by convention 
    \( \lambda \CoordinateNormDual{\TripleNormBall}{\IndexSubset} =\emptyset \),
    for any \( \lambda \in [-\infty,0[ \), and 
    \( +\infty\CoordinateNormDual{\TripleNormBall}{\IndexSubset} =\RR^d \).

    On the other hand, when \( \primal \neq 0 \), we get
    \begin{equation}
      \dual \in \partial_{\CouplingCapra}\np{ \FunctionBigF \circ \SupportMapping}\np{\primal} 
      \iff 
      \sup_{\IndexSubset\subset \IndexSet} \bc{ \CoordinateNormDual{\TripleNorm{\dual}}{\IndexSubset}-\FunctionBigF\np{\IndexSubset} }=
      \frac{ \proscal{\primal}{\dual} }{ \TripleNorm{\primal} } 
      - \FunctionBigF\bp{\SupportMapping\np{\primal}} 
      \eqfinp
      \label{eq:support_mapping_subdifferential_proof_neq_0}
    \end{equation}
  We now establish necessary and sufficient conditions for \( \dual \) to belong to
  \( \partial_{\CouplingCapra}\np{ \FunctionBigF \circ \SupportMapping}\np{\primal} \) when \( \primal \neq 0 \).
  For this purpose, we consider \( \primal \in \RR^d\backslash\{0\} \), and we denote
  \( L = \Support{\primal} \).
  We have
  \begin{align*}
    \dual
    &\in 
      \partial_{\CouplingCapra}\np{ \FunctionBigF \circ \SupportMapping}\np{\primal} 
    \\
    &\iff
      \sup_{\IndexSubset\subset \IndexSet} \bc{ \CoordinateNormDual{\TripleNorm{\dual}}{\IndexSubset}-\FunctionBigF\np{\IndexSubset} }=
      \frac{ \proscal{\primal}{\dual} }{ \TripleNorm{\primal} } -\FunctionBigF\np{L}
      \tag{by~\eqref{eq:support_mapping_subdifferential_proof_neq_0} with \( \SupportMapping\np{\primal}=L \)}
    \\
    &\iff 
      \CoordinateNormDual{\TripleNorm{\dual}}{L} -\FunctionBigF\np{L} \leq
      \sup_{\IndexSubset\subset \IndexSet} \bc{ \CoordinateNormDual{\TripleNorm{\dual}}{\IndexSubset}-\FunctionBigF\np{\IndexSubset} }=
      \frac{ \proscal{\primal}{\dual} }{ \TripleNorm{\primal} } -\FunctionBigF\np{L} 
    \\
    &\iff 
      \TripleNorm{\dual_L}_{L,\star} -\FunctionBigF\np{L} \leq 
      \CoordinateNormDual{\TripleNorm{\dual}}{L} -\FunctionBigF\np{L} \leq
      \sup_{\IndexSubset\subset \IndexSet} \bc{ \CoordinateNormDual{\TripleNorm{\dual}}{\IndexSubset}-\FunctionBigF\np{\IndexSubset} }=
      \frac{ \proscal{\primal}{\dual} }{ \TripleNorm{\primal} } -\FunctionBigF\np{L} 
      \intertext{ as \( \TripleNorm{\dual_L}_{L,\star} \leq
      \CoordinateNormDual{\TripleNorm{\dual}}{L} \) 
      by the expression~\eqref{eq:dual_coordinate_norm_definition} of the dual local-coordinate-$L$ norm
      \( \CoordinateNormDual{\TripleNorm{\dual}}{L} \)
      }
    &\iff 
      \TripleNorm{\dual_L}_{L,\star} -\FunctionBigF\np{L} \leq 
      \CoordinateNormDual{\TripleNorm{\dual}}{l} -\FunctionBigF\np{L} \leq
      \sup_{\IndexSubset\subset \IndexSet} \bc{ \CoordinateNormDual{\TripleNorm{\dual}}{\IndexSubset}-\FunctionBigF\np{\IndexSubset} }=
      \frac{ \proscal{\primal}{\dual} }{ \TripleNorm{\primal} } -\FunctionBigF\np{L} \leq
      \TripleNorm{\dual_L}_{L,\star} -\FunctionBigF\np{L} 
      \intertext{as we have \( \frac{ \proscal{\primal}{\dual} }{ \TripleNorm{\primal} } 
      = \frac{ \proscal{\primal_L}{\dual_L} }{ \TripleNorm{\primal_L} } 
      = \frac{ \proscal{\primal_L}{\dual_L} }{ \TripleNorm{\primal_L}_L } 
      \leq \TripleNorm{\dual_L}_{L,\star} \) since \( \primal=\primal_L \) and 
      by property~\eqref{eq:norm_dual_norm_inequality} of the dual norm} 
    &\iff 
      \TripleNorm{\dual_L}_{L,\star} -\FunctionBigF\np{L} = 
      \CoordinateNormDual{\TripleNorm{\dual}}{L} -\FunctionBigF\np{L} =
      \sup_{\IndexSubset\subset \IndexSet} \bc{ \CoordinateNormDual{\TripleNorm{\dual}}{\IndexSubset}-\FunctionBigF\np{\IndexSubset} }=
      \frac{ \proscal{\primal}{\dual} }{ \TripleNorm{\primal} } -\FunctionBigF\np{L} 
      \tag{as all terms in the inequalities are necessarily equal }
    \\
    &\iff
      \begin{cases}
        \text{either } \FunctionBigF\np{L}=-\infty 
        \\
        \text{or } \bp{ \FunctionBigF\np{L}=+\infty \text{ and }
          \FunctionBigF\np{\IndexSubset}=+\infty \eqsepv \forall \IndexSubset\subset \IndexSet }
        \\[2mm]
        \text{or }
        \Big( -\infty < \FunctionBigF\np{L} < +\infty \mtext{ and } 
        \\ \qquad 
        \TripleNorm{\dual_L}_{L,\star} = 
        \CoordinateNormDual{\TripleNorm{\dual}}{L} =
        \frac{ \proscal{\primal}{\dual} }{ \TripleNorm{\primal} }
        \mtext{ and } \CoordinateNormDual{\TripleNorm{\dual}}{L}-\FunctionBigF\np{L} = 
        \sup_{\IndexSubset\subset \IndexSet} \bc{ \CoordinateNormDual{\TripleNorm{\dual}}{\IndexSubset}-\FunctionBigF\np{\IndexSubset} }
        \Big) \eqfinp
      \end{cases}
  \end{align*}
  Let us make a brief insert and notice that 
  \begin{align*}
    \primal=\primal_L &\eqsepv \SupportMapping\np{\primal}=L\not=\emptyset \eqsepv
                        \proscal{\primal}{\dual} =
                        \TripleNorm{\primal} \times
                        \CoordinateNormDual{\TripleNorm{\dual}}{L} 
    \\
    \implies & \quad
                  \SupportMapping\np{\primal}=L \not=\emptyset \eqsepv
                  \proscal{\primal_L}{\dual_L} =
                  \TripleNorm{\primal_L}_L \times
                  \CoordinateNormDual{\TripleNorm{\dual}}{L} 
                  \tag{by~\eqref{eq:orthogonal_projection_self-dual} }
    \\
    \implies & \quad
                  \SupportMapping\np{\primal}=L \not=\emptyset \eqsepv
                  \TripleNorm{\primal_L}_L \times
                  \CoordinateNormDual{\TripleNorm{\dual}}{L} 
                  \leq
                  \TripleNorm{\primal_L}_L \times
                  \TripleNorm{\dual_L}_{L,\star} 
                  \tag{by~\eqref{eq:norm_dual_norm_inequality}}
    \\
    \implies & \quad
                  \CoordinateNormDual{\TripleNorm{\dual}}{L} 
                  \leq
                  \TripleNorm{\dual_L}_{L,\star} 
                  \tag{since $\TripleNorm{\primal_L}_L=\TripleNorm{\primal_L} \neq 0$ because 
\( \SupportMapping\np{\primal}=L \not=\emptyset \)}
    \\
    \implies & \quad
                  \CoordinateNormDual{\TripleNorm{\dual}}{L} 
                  = \TripleNorm{\dual_L}_{L,\star}
  \end{align*}              
  as \( \TripleNorm{\dual_L}_{L,\star} \leq
  \CoordinateNormDual{\TripleNorm{\dual}}{L} \) 
      by the expression~\eqref{eq:dual_coordinate_norm_definition} of the dual local-coordinate-$L$ norm
      \( \CoordinateNormDual{\TripleNorm{\dual}}{L} \). 
\smallskip

  Now, let us go back to the equivalences regarding
  \(  \dual \in \partial_{\CouplingCapra}
  \np{ \FunctionBigF \circ \SupportMapping}\np{\primal} \).
  Focusing on the case where \( \SupportMapping\np{\primal}=L \not=\emptyset \)
and \( -\infty < \FunctionBigF\np{L} < +\infty \), 
  we have 
  \begin{align*}
    \dual \in \partial_{\CouplingCapra}
    &\np{ \FunctionBigF \circ \SupportMapping}\np{\primal}
      \Leftrightarrow 
      \TripleNorm{\dual_L}_{L,\star} = 
      \CoordinateNormDual{\TripleNorm{\dual}}{L} =
      \frac{ \proscal{\primal}{\dual} }{ \TripleNorm{\primal} }
      \mtext{ and } 
      L \in \argmax_{\IndexSubset\subset \IndexSet} 
\bc{ \CoordinateNormDual{\TripleNorm{\dual}}{\IndexSubset}-\FunctionBigF\np{\IndexSubset} }
    \\ 
    &\Leftrightarrow 
      \TripleNorm{\dual_L}_{L,\star} = 
      \CoordinateNormDual{\TripleNorm{\dual}}{L} 
      \mtext{ and } 
      \proscal{\primal}{\dual} =
      \TripleNorm{\primal} \times
      \CoordinateNormDual{\TripleNorm{\dual}}{L} 
      \mtext{ and } 
      L \in \argmax_{\IndexSubset\subset \IndexSet} \bc{ \CoordinateNormDual{\TripleNorm{\dual}}{\IndexSubset}-\FunctionBigF\np{\IndexSubset} }
    \\ 
    &\Leftrightarrow 
      \proscal{\primal}{\dual} =
      \TripleNorm{\primal} \times
      \CoordinateNormDual{\TripleNorm{\dual}}{L} 
      \mtext{ and } 
      L \in \argmax_{\IndexSubset\subset \IndexSet} \bc{ \CoordinateNormDual{\TripleNorm{\dual}}{\IndexSubset}-\FunctionBigF\np{\IndexSubset} }
      \intertext{as just established in the insert right above}
    &\Leftrightarrow 
      \proscal{\primal}{\dual} =
      \CoordinateNorm{\TripleNorm{\primal}}{L} \times
      \CoordinateNormDual{\TripleNorm{\dual}}{L} 
      \mtext{ and } 
      L \in \argmax_{\IndexSubset\subset \IndexSet} \bc{ \CoordinateNormDual{\TripleNorm{\dual}}{\IndexSubset}-\FunctionBigF\np{\IndexSubset} }
      \tag{as \( \SupportMapping\np{\primal}=L \implies \primal \in \FlatRR_L
     \implies \TripleNorm{\primal}=
      \CoordinateNorm{\TripleNorm{\primal}}{L} \)
      by~\eqref{eq:coordinate_norm_graded_b} }
    \\
    &\Leftrightarrow 
      \dual \in \NORMAL_{ \CoordinateNorm{\TripleNormBall}{L} }
      \np{ \frac{ \primal }{ \CoordinateNorm{\TripleNorm{\primal}}{L} } }
      \mtext{ and } 
      L \in \argmax_{\IndexSubset\subset \IndexSet} \bc{ \CoordinateNormDual{\TripleNorm{\dual}}{\IndexSubset}-\FunctionBigF\np{\IndexSubset} }
      %
  \end{align*}
    by the equivalence
  \( \proscal{\primal}{\dual} =
      \CoordinateNorm{\TripleNorm{\primal}}{l} \times
      \CoordinateNormDual{\TripleNorm{\dual}}{l} \iff
      \dual \in \NORMAL_{ \CoordinateNorm{\TripleNormBall}{l} }
      \np{ \frac{ \primal }{ \CoordinateNorm{\TripleNorm{\primal}}{l} } } \).
      \medskip
      
  This ends the proof. 
\end{proof}

We now show that, when both the source norm and its dual norm are orthant-strictly
monotonic, the \Capra-subdifferential of a nondecreasing finite-valued \FSM\ is nonempty.

\begin{proposition}
  \label{pr:nonempty_subdifferential}
  Let $\TripleNorm{\cdot}$ be the source norm with 
associated coupling $\CouplingCapra$, as in Definition~\ref{de:coupling_CAPRA},
and with associated families
  \( \sequence{\CoordinateNorm{\TripleNorm{\cdot}}{\IndexSubset}}{\IndexSubset\subset \IndexSet} \)
  of local-coordinate-$\IndexSubset$ norms 
and 
  \( \sequence{\CoordinateNormDual{\TripleNorm{\cdot}}{\IndexSubset}}{\IndexSubset\subset \IndexSet} \)
  of dual local-coordinate-$\IndexSubset$ norms, as in Definition~\ref{de:coordinate_norm}. 

Suppose that both the norm $\TripleNorm{\cdot}$ and the dual norm $\TripleNormDual{\cdot}$
  are orthant-strictly monotonic.
  Let \( \FunctionBigF : 2^{\IndexSet} \to \RR \) be a nondecreasing
  finite-valued set function.
  Then, we have 
  \begin{equation*}
    \partial_{\CouplingCapra}\np{ \FunctionBigF \circ \SupportMapping }\np{\primal} \neq \emptyset
    \eqsepv \forall \primal \in \RR^d 
    \eqfinp 
  \end{equation*}
  More precisely, 
  \( \partial_{\CouplingCapra}\np{ \FunctionBigF \circ \SupportMapping }\np{0}
  = \bigcap_{ \IndexSubset \subset \IndexSet } \bc{ \FunctionBigF\np{\IndexSubset} \UppPlus \bp{-\FunctionBigF\np{\emptyset}}}
  \CoordinateNormDual{\TripleNormBall}{\IndexSubset} \neq \emptyset \)
  and, when $\primal\not=0$, for any \( \dual \in \RR^d \) such that
  \( \Support{\dual} = \Support{\primal} \),
  and that \( \proscal{\primal}{\dual} =
  \TripleNorm{\primal} \times \TripleNormDual{\dual} \),
  we have that \( \lambda \dual \in \partial_{\CouplingCapra}\np{ \FunctionBigF \circ \SupportMapping }\np{\primal} \)
  for $\lambda >0$ large enough.
\end{proposition}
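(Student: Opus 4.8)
The plan is to treat the two cases $\primal = 0$ and $\primal \neq 0$ separately, using the explicit description of the \Capra-subdifferential furnished by Proposition~\ref{pr:support_mapping_subdifferential_varphi}. In both cases the crucial point is a nonemptiness statement for an intersection (or a single membership) of balls of dual local-coordinate-$\IndexSubset$ norms, and the hypothesis that both $\TripleNorm{\cdot}$ and $\TripleNormDual{\cdot}$ are orthant-strictly monotonic is what makes these objects well-behaved --- in particular, it is under this hypothesis that the local-coordinate-$\IndexSubset$ norms coincide with generalized local-top-$\IndexSubset$ norms (Proposition~\ref{pr:dual_coordinate-k_norm_=_generalized_top-k_norm}, used in the proof of Proposition~\ref{pr:support_mapping_conjugate}), and that the dual-coordinate balls are monotone in~$\IndexSubset$.

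\textbf{Case $\primal = 0$.} By~\eqref{eq:support_mapping_subdifferential_at0}, we must show that
\[
  \bigcap_{ \IndexSubset \subset \IndexSet }
  \bc{ \FunctionBigF\np{\IndexSubset} \UppPlus \bp{-\FunctionBigF\np{\emptyset}} }
  \CoordinateNormDual{\TripleNormBall}{\IndexSubset} \neq \emptyset .
\]
Since $\FunctionBigF$ is finite-valued and nondecreasing, $\FunctionBigF\np{\IndexSubset} - \FunctionBigF\np{\emptyset} \geq 0$ for every $\IndexSubset$, so each scalar multiple of a ball is a genuine (nonempty) ball containing~$0$; hence $0$ belongs to the intersection and the intersection is nonempty. (If one wants a nonzero point one exploits monotonicity of the family of dual-coordinate balls in~$\IndexSubset$, but for nonemptiness the origin already suffices.) This case is essentially immediate.

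\textbf{Case $\primal \neq 0$.} Write $L = \Support{\primal} \neq \emptyset$; since $\FunctionBigF$ is finite-valued we are in the third sub-case of Proposition~\ref{pr:support_mapping_subdifferential_varphi}, so I must produce a $\dual$ satisfying the two conditions in~\eqref{eq:support_mapping_subdifferential}: that $\dual$ lies in the normal cone $\NORMAL_{ \CoordinateNorm{\TripleNormBall}{L} }\bp{\primal / \CoordinateNorm{\TripleNorm{\primal}}{L}}$, and that $L \in \argmax_{\IndexSubset\subset\IndexSet}\bc{ \CoordinateNormDual{\TripleNorm{\dual}}{\IndexSubset} - \FunctionBigF\np{\IndexSubset}}$. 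I would start by picking a dual vector aligning with $\primal$ for the source norm: choose $\dual_0 \in \RR^d$ with $\Support{\dual_0} = L$ and $\proscal{\primal}{\dual_0} = \TripleNorm{\primal}\,\TripleNormDual{\dual_0}$ (such $\dual_0$ exists, and $\Support{\dual_0}=\Support{\primal}$ can be arranged because $\TripleNormDual{\cdot}$ is orthant-strictly monotonic, so the supporting functional of $\TripleNorm{\cdot}$ at $\primal$ has the same support as $\primal$ --- this is exactly the ``SBO/exposed'' type consequence of orthant-strict monotonicity recorded in the companion paper and invoked elsewhere). Then set $\dual = \lambda \dual_0$ for a scaling parameter $\lambda > 0$. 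The alignment $\proscal{\primal}{\dual} = \TripleNorm{\primal} \TripleNormDual{\dual}$ together with $\Support{\dual}=L$ gives, via~\eqref{eq:orthogonal_projection_self-dual} and $\primal = \primal_L$, the chain $\proscal{\primal}{\dual} = \proscal{\primal_L}{\dual_L} = \TripleNorm{\primal_L}_L \cdot \TripleNorm{\dual_L}_{L,\star}$; combined with the sandwich $\TripleNorm{\dual_L}_{L,\star} \le \CoordinateNormDual{\TripleNorm{\dual}}{L}$ from~\eqref{eq:dual_coordinate_norm_definition} and the Cauchy--Schwarz-type bound~\eqref{eq:norm_dual_norm_inequality}, this forces $\CoordinateNormDual{\TripleNorm{\dual}}{L} = \TripleNorm{\dual_L}_{L,\star}$ and $\proscal{\primal}{\dual} = \CoordinateNorm{\TripleNorm{\primal}}{L}\cdot\CoordinateNormDual{\TripleNorm{\dual}}{L}$, which is precisely the normal-cone condition --- and, importantly, this holds for every $\lambda > 0$, since all three quantities scale linearly in $\lambda$.

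\textbf{The main obstacle} is the $\argmax$ condition $L \in \argmax_{\IndexSubset}\bc{ \CoordinateNormDual{\TripleNorm{\dual}}{\IndexSubset} - \FunctionBigF\np{\IndexSubset}}$, and this is where ``$\lambda$ large enough'' enters. For $\IndexSubset \subset L$, monotonicity of $\FunctionBigF$ gives $\FunctionBigF\np{\IndexSubset} \le \FunctionBigF\np{L}$ while monotonicity of the dual-coordinate balls gives $\CoordinateNormDual{\TripleNorm{\dual}}{\IndexSubset} \le \CoordinateNormDual{\TripleNorm{\dual}}{L}$, so those $\IndexSubset$ never beat $L$; the problem is $\IndexSubset \not\subset L$, where $\FunctionBigF\np{\IndexSubset}$ could be smaller than $\FunctionBigF\np{L}$ in a way that the norm term must overcome. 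The key estimate is that $\CoordinateNormDual{\TripleNorm{\dual}}{L}$ grows strictly faster in $\lambda$ than $\CoordinateNormDual{\TripleNorm{\dual}}{\IndexSubset}$ whenever $\IndexSubset$ does not contain $L$ --- because $\dual$ (being a positive multiple of $\dual_0$) is supported exactly on $L$, so $\dual_{\IndexSubset}$ discards a genuinely nonzero block of $\dual$ when $\IndexSubset \not\supset L$, and orthant-strict monotonicity of $\TripleNormDual{\cdot}$ (hence of its restrictions, hence of the local-top / local-coordinate dual norms built from it) turns ``discarding a nonzero block'' into a strict inequality $\CoordinateNormDual{\TripleNorm{\dual_0}}{\IndexSubset} < \CoordinateNormDual{\TripleNorm{\dual_0}}{L}$. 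Since $\CoordinateNormDual{\TripleNorm{\lambda\dual_0}}{\IndexSubset} = \lambda\,\CoordinateNormDual{\TripleNorm{\dual_0}}{\IndexSubset}$ by homogeneity, the difference
\[
  \bp{ \CoordinateNormDual{\TripleNorm{\dual}}{L} - \FunctionBigF\np{L} }
  - \bp{ \CoordinateNormDual{\TripleNorm{\dual}}{\IndexSubset} - \FunctionBigF\np{\IndexSubset} }
  = \lambda\,\bp{ \CoordinateNormDual{\TripleNorm{\dual_0}}{L} - \CoordinateNormDual{\TripleNorm{\dual_0}}{\IndexSubset} }
    + \bp{ \FunctionBigF\np{\IndexSubset} - \FunctionBigF\np{L} }
\]
has strictly positive $\lambda$-coefficient for every $\IndexSubset \not\supset L$ (and is $\ge 0$ already for $\IndexSubset \subset L$), and as there are only finitely many subsets $\IndexSubset$, choosing $\lambda$ larger than $\max_{\IndexSubset \not\supset L}\bp{\FunctionBigF\np{L} - \FunctionBigF\np{\IndexSubset}}\big/\bp{\CoordinateNormDual{\TripleNorm{\dual_0}}{L} - \CoordinateNormDual{\TripleNorm{\dual_0}}{\IndexSubset}}$ makes $L$ the unique maximizer. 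Both conditions of~\eqref{eq:support_mapping_subdifferential} then hold for $\dual = \lambda\dual_0$, so $\lambda\dual_0 \in \partial_{\CouplingCapra}\np{\FunctionBigF\circ\SupportMapping}\np{\primal}$, which is the claim.
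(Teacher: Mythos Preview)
Your proof is correct and follows essentially the same route as the paper's: the case $\primal=0$ is handled identically, and for $\primal\neq 0$ you pick a dual vector $\dual_0$ aligned with $\primal$ and sharing its support (existence is the ``SDC'' characterization of orthant-strict monotonicity of $\TripleNorm{\cdot}$, Proposition~\ref{pr:orthant-strictly_monotonic} --- note it is OSM of the \emph{primal} norm that gives this, not of $\TripleNormDual{\cdot}$), check the normal-cone condition, and then use a scaling argument in $\lambda$ together with the strict inequality $\CoordinateNormDual{\TripleNorm{\dual_0}}{\IndexSubset} < \CoordinateNormDual{\TripleNorm{\dual_0}}{L}$ for $L\not\subset\IndexSubset$ (this is exactly Proposition~\ref{pr:properties-of-dual-ccord-IndexSubset-under-orth-mon}) to force $L$ into the argmax.

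Two small slips to clean up. First, your sentence ``For $\IndexSubset \subset L$, \ldots so those $\IndexSubset$ never beat $L$'' is not justified by the reasoning you give: both $\CoordinateNormDual{\TripleNorm{\dual}}{\IndexSubset}$ and $\FunctionBigF(\IndexSubset)$ decrease, so the difference can go either way for small~$\lambda$. This is harmless, since any $\IndexSubset\subsetneq L$ satisfies $L\not\subset\IndexSubset$ and is therefore already covered by your later ``strictly positive $\lambda$-coefficient'' argument. Second, in that later argument the parenthetical ``(and is $\geq 0$ already for $\IndexSubset \subset L$)'' should read $\IndexSubset \supset L$: it is for $L\subset\IndexSubset$ that the $\lambda$-coefficient vanishes (by~\eqref{eq:dual_tn-L-included-in-IndexSubset}) and the constant term $\FunctionBigF(\IndexSubset)-\FunctionBigF(L)\geq 0$ by monotonicity of~$\FunctionBigF$. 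With these corrections the case split is exactly the paper's ($L\subset\IndexSubset$ versus $L\not\subset\IndexSubset$), and the argument is complete.
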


\begin{proof}
  %
  When \( \primal=0 \), we have,
  by~\eqref{eq:support_mapping_subdifferential_at0}, that 
  \( \partial_{\CouplingCapra}\np{ \FunctionBigF \circ \SupportMapping }\np{0}
  = \bigcap_{ \IndexSubset \subset \IndexSet } \bc{ \FunctionBigF\np{\IndexSubset} \UppPlus \bp{-\FunctionBigF\np{\emptyset} }}
  \CoordinateNormDual{\TripleNormBall}{\IndexSubset} \)
  because \( \FunctionBigF\np{\IndexSubset} \UppPlus \bp{-\FunctionBigF\np{\emptyset}} 
  = \FunctionBigF\np{\IndexSubset} -\FunctionBigF\np{\emptyset} \) since the function~$\FunctionBigF$
  takes finite values. 
  The set \( \bigcap_{ \IndexSubset \subset \IndexSet } \bc{ \FunctionBigF\np{\IndexSubset} \UppPlus \bp{-\FunctionBigF\np{\emptyset} }}
  \CoordinateNormDual{\TripleNormBall}{\IndexSubset} \) is nonempty
  (it contains~$0$), because
  \( \FunctionBigF\np{\IndexSubset} -\FunctionBigF\np{\emptyset} \geq 0 \)
  for \( \IndexSubset \subset \IndexSet \)
  since \( \FunctionBigF : 2^{\IndexSet} \to \RR \) is nondecreasing.
  \medskip

  From now on, we consider $\primal \in \RR^d\backslash\{0\} $ such that 
  $\Support{\primal}=L \subset \IndexSet$, where $L\not=\emptyset$ since $x\not=0$.
  We will use the
  characterization~\eqref{eq:support_mapping_subdifferential}
  of the subdifferential~\( \partial_{\CouplingCapra}\np{ \FunctionBigF \circ \SupportMapping }\np{\primal} \).

  Since the norm $\TripleNorm{\cdot}$ is orthant-strictly monotonic,
  by Proposition~\ref{pr:orthant-strictly_monotonic}
  (equivalence between Item~\ref{it:OSM} and Item~\ref{it:SDC}),
  there exists a vector
  \( \dual \in \RR^d \) such that 
  \begin{subequations}
    \begin{align}
      L= &\Support{\primal} = \Support{\dual} \not= \emptyset
           \eqfinv 
           \label{eq:conjugate_point_proof_a}
      \\
         &\text{ and }
           \proscal{\primal}{\dual}
           = 
           \TripleNorm{\primal} \times \TripleNormDual{\dual} 
           \eqfinp
           \label{eq:conjugate_point_proof_b}
    \end{align}
    \label{eq:conjugate_point_proof}
  \end{subequations}
  Since both the norm $\TripleNorm{\cdot}$ and the dual norm $\TripleNormDual{\cdot}$
  are orthant-strictly monotonic,
  using Proposition~\ref{pr:properties-of-dual-ccord-IndexSubset-under-orth-mon} we have that\footnote{%
    $\IndexSubset \subsetneq L$ stands for $\IndexSubset\subset L$ and $\IndexSubset\not=L$.}
  \begin{equation}
\IndexSubset\subsetneq L \implies
    \CoordinateNormDual{\TripleNorm{\dual}}{\IndexSubset}  < 
    \CoordinateNormDual{\TripleNorm{\dual}}{L} =
    \CoordinateNormDual{\TripleNorm{\dual}}{\IndexSet} = \TripleNormDual{\dual}
    \eqfinp
    \label{eq:level_curve_l0_characterization}
  \end{equation}
  \medskip

  \noindent $\bullet$ First, we are going to establish that we have
  \( \dual \in 
  \NORMAL_{ \CoordinateNorm{\TripleNormBall}{L} }
  \np{\frac{ \primal }{ \CoordinateNorm{ \TripleNorm{\primal} }{L} } } \), that
  is, the first of the two conditions in the characterization~\eqref{eq:support_mapping_subdifferential}
  of the subdifferential~\( \partial_{\CouplingCapra}\np{ \FunctionBigF \circ \SupportMapping }\np{\primal} \). 

  On the one hand, by~\eqref{eq:level_curve_l0_characterization},
  we have that \( \TripleNormDual{\dual} =
  \CoordinateNormDual{\TripleNorm{\dual}}{L} \). 
  On the other hand, we have 
  \( \TripleNorm{\primal} = \CoordinateNorm{\TripleNorm{\primal}}{L} \)
  by~\eqref{eq:coordinate_norm_graded_b} since $\Support{\primal}=L$.
  Hence, from~\eqref{eq:conjugate_point_proof_b},
  we get that \( \proscal{\primal}{\dual} =
  \CoordinateNorm{\TripleNorm{\primal}}{L} 
  \times \CoordinateNormDual{\TripleNorm{\dual}}{L} \),
  from which we obtain 
  \( \dual \in \NORMAL_{ \CoordinateNorm{\TripleNormBall}{L} }
  \np{\frac{ \primal }{ \CoordinateNorm{ \TripleNorm{\primal} }{L} } } \)
    by the equivalence
    \( \proscal{\primal}{\dual} =
  \CoordinateNorm{\TripleNorm{\primal}}{L} 
  \times \CoordinateNormDual{\TripleNorm{\dual}}{L}
  \iff
      \dual \in \NORMAL_{ \CoordinateNorm{\TripleNormBall}{L} }
      \np{\frac{ \primal }{ \CoordinateNorm{ \TripleNorm{\primal} }{L} } } \)
  as \( \primal \neq 0 \).
  %
  To close this part, notice that, for all $\lambda > 0 $,
  we have that 
  \(  \lambda\dual \in 
  \NORMAL_{ \CoordinateNorm{\TripleNormBall}{L} }
  \np{\frac{ \primal }{ \CoordinateNorm{ \TripleNorm{\primal} }{L} } } \), 
  because this last set is a (normal) cone. 
  \medskip

  \noindent $\bullet$ Second, we turn to proving the second of the two conditions
  in the characterization~\eqref{eq:support_mapping_subdifferential}
  of the subdifferential~\( \partial_{\CouplingCapra}\np{ \FunctionBigF \circ \SupportMapping }\np{\primal} \).
  More precisely, we are going to show that, for $\lambda$ large enough,
  \( \CoordinateNormDual{\TripleNorm{\lambda\dual}}{L}-\FunctionBigF\np{L} = 
  \sup_{\IndexSubset\subset \IndexSet} \bc{ \CoordinateNormDual{\TripleNorm{\lambda\dual}}{\IndexSubset}-\FunctionBigF\np{\IndexSubset} }
  \).
  For this purpose, we consider the mapping $\psi: ]0,+\infty[ \to \RR$ defined by
  \begin{equation*}
    \psi(\lambda) =
    \CoordinateNormDual{\TripleNorm{\lambda\dual}}{L}-\FunctionBigF\np{L} -
    \sup_{\IndexSubset\subset \IndexSet} \bc{ \CoordinateNormDual{\TripleNorm{\lambda\dual}}{\IndexSubset}-\FunctionBigF\np{\IndexSubset} }
    \eqsepv \forall \lambda > 0 
    \eqfinv
  \end{equation*}
  and we show that \( \psi(\lambda) =0 \) for $\lambda$ large enough.
  We have
  \begin{align*}
    \psi(\lambda) 
    &=  \inf_{\IndexSubset\subset \IndexSet} \phi_\IndexSubset(\lambda)
      \text{ with }
      \phi_\IndexSubset(\lambda) = \Bp{ \lambda \bp{ \CoordinateNormDual{\TripleNorm{\dual}}{L} -
      \CoordinateNormDual{\TripleNorm{\dual}}{\IndexSubset} } +
      \FunctionBigF\np{\IndexSubset}-\FunctionBigF\np{L} } 
      \eqfinp
  \end{align*}
  Using
  Proposition~\ref{pr:properties-of-dual-ccord-IndexSubset-under-orth-mon},
  we get the following.
  \begin{itemize}
   \item 
    If $\IndexSubset\cap L = L$ (which is equivalent to $L \subset \IndexSubset$), we get that
    $\bp{ \CoordinateNormDual{\TripleNorm{\dual}}{L} -
      \CoordinateNormDual{\TripleNorm{\dual}}{\IndexSubset} } =0$,
    which implies that 
    $\phi_\IndexSubset(\lambda) =
    \FunctionBigF\np{\IndexSubset}-\FunctionBigF\np{L} \geq 0$ 
    since $\FunctionBigF$ is assumed nondecreasing.
  \item 
    If $\IndexSubset\cap L \subsetneq L$ (which  is equivalent to $L \not\subset \IndexSubset$), 
we get that $\bp{ \CoordinateNormDual{\TripleNorm{\dual}}{L} -
      \CoordinateNormDual{\TripleNorm{\dual}}{\IndexSubset} } >0$, 
    which implies that $\phi_\IndexSubset(\lambda)$ goes to infinity with $\lambda$.
 \end{itemize}

  Wrapping up the above results, we have shown that, 
  for any vector \( \dual \in \RR^d \) such that 
  \( \Support{\dual} = \Support{\primal} \),
  and that \( \proscal{\primal}{\dual} =
  \TripleNorm{\primal} \times \TripleNormDual{\dual} \),
  then, for $\lambda >0$ large enough,
  \( \lambda\dual \) satisfies the two conditions 
  in the characterization~\eqref{eq:support_mapping_subdifferential}
  of the subdifferential~\( 
  \partial_{\CouplingCapra}\np{ \FunctionBigF \circ \SupportMapping }\np{\primal} \).
  \medskip

  This ends the proof. 
\end{proof}

\subsection{\Capra-biconjugates of \FSM} 
\label{CAPRAC_biconjugates_related_to_the_pseudo_norm}

With the Fenchel conjugacy, we calculate that 
\( \LFMbi{ \delta_{ \SuppLevelSet{\SupportMapping}{\IndexSubset}  } }
= 0 \) --- 
where \( \IndexSubset \neq \emptyset \), and
\( \delta_{ \SuppLevelSet{\SupportMapping}{\IndexSubset}  } \) is the characteristic function
of the level sets~\eqref{eq:support_mapping_level_set}
as defined in~\eqref{eq:characteristic_function} --- 
and, more generally, that 
\( \LFMbi{ \np{\FunctionBigF\circ\SupportMapping } }= 
\inf \ba{ \delta_{\{0\}} \LowPlus \FunctionBigF\np{\emptyset}, 
  \inf_{\IndexSubset \neq
    \emptyset}\FunctionBigF\np{\IndexSubset} } \),
with any set function~$\FunctionBigF$ as in~\eqref{eq:FunctionBigF}.
Hence, the Fenchel conjugacy is not suitable
to handle  \FSM.

By contrast, we will now show that \FSM\ are related to 
the families of local-coordinate-$\IndexSubset$ norms and dual 
local-coordinate-$\IndexSubset$ norms, in Definition~\ref{de:coordinate_norm},
by the following \Capra-biconjugacy formulas.

In the sequel we will use the notation~$\RR_{\IndexSet}$ to refer to 
the subset $\prod_{\IndexSubset\subset \IndexSet} \FlatRR_{\IndexSubset}$ of
\( \np{\RR^d}^{2^\IndexSet} \),
and the notation~\( \Delta_{\IndexSet} \) to refer to the simplex of~$\RR_{\IndexSet}$
(which can be identified with the simplex of $\RR^v$, where $v=\cardinal{2^\IndexSet}$).
Hence, an element $z\in \RR_{\IndexSet}$ is a family
$\sequence{z_{(\IndexSubset)}}{\IndexSubset \subset \IndexSet}$
where $z_{(\IndexSubset)}\in \FlatRR_{\IndexSubset}$ for
$\IndexSubset\subset \IndexSet$
(with the convention $z_{(\emptyset)}=0$), 
and an element $\lambda \in \Delta_{\IndexSet}$ is a family
$\sequence{\lambda_\IndexSubset}{\IndexSubset \subset \IndexSet}$ of nonnegative
real numbers such that $\sum_{\IndexSubset\subset \IndexSet} \lambda_\IndexSubset=1$.
We will also use $\RR_{\IndexSet}^{\backslash\emptyset}$ to denote the projection
of~$\RR_{\IndexSet}$ when we get rid of the element $\lambda_{\emptyset}$, and we define {$\SimplexWithoutEmptySet$}
in the same way, that is, 
\begin{equation}
  \SimplexWithoutEmptySet = 
  \Bset{ \sequence{\lambda_\IndexSubset { \in \RR_{\IndexSet}^{\backslash\emptyset} } }{%
      \IndexSubset \subset \IndexSet, \IndexSubset\not=\emptyset}}
  {\lambda_\IndexSubset\ge 0 \text{ and } \sum_{\IndexSubset\subset
      \IndexSet,\IndexSubset\not=\emptyset} \lambda_\IndexSubset \le 1}
  \eqfinp
\end{equation}
We denote by~$\CoordinateNorm{\TripleNormBall}{\cdot}$ (resp. $\CoordinateNorm{\TripleNormSphere}{\cdot}$)
the family $\na{\CoordinateNorm{\TripleNormBall}{\IndexSubset}}_{{\IndexSubset
    \subset \IndexSet }}$
of unit local balls in~\eqref{eq:coordinate_norm_unit_ball}
(resp. $\na{\CoordinateNorm{\TripleNormSphere}{\IndexSubset}}_{{\IndexSubset
    \subset \IndexSet}}$ of unit local spheres in~\eqref{eq:coordinate_norm_unit_sphere}) with the convention
\( \CoordinateNorm{\TripleNormBall}{\emptyset} =\CoordinateNorm{\TripleNormSphere}{\emptyset}= \{0\}\).
Then, $z\in \CoordinateNorm{\TripleNormBall}{\cdot}$ will denote a family
$\na{z_{(\IndexSubset)}}_{{\IndexSubset \subset \IndexSet,
    \IndexSubset\not=\emptyset}}$, 
such that $z_{(\IndexSubset)} \in \CoordinateNorm{\TripleNormBall}{\IndexSubset}$ for all
$\IndexSubset\subset \IndexSet$, with $\IndexSubset\not=\emptyset$.

\begin{proposition}
  \label{pr:Support_mapping_biconjugate}
  Let $\TripleNorm{\cdot}$ be the source norm with 
associated coupling $\CouplingCapra$, as in Definition~\ref{de:coupling_CAPRA},
and with associated families
  \( \sequence{\CoordinateNorm{\TripleNorm{\cdot}}{\IndexSubset}}{\IndexSubset\subset \IndexSet} \)
  of local-coordinate-$\IndexSubset$ norms 
and 
  \( \sequence{\CoordinateNormDual{\TripleNorm{\cdot}}{\IndexSubset}}{\IndexSubset\subset \IndexSet} \)
  of dual local-coordinate-$\IndexSubset$ norms, as in Definition~\ref{de:coordinate_norm}. 

  Let $\sequence{\Gamma_\IndexSubset}{\IndexSubset \subset \IndexSet}$ be a family of subsets of $\RR^d$
  such that the closed convex hull
  $\closedconvexhull{\np{\Gamma_\IndexSubset}}= \CoordinateNorm{\TripleNormBall}{\IndexSubset}$
  for all $\IndexSubset\subset \IndexSet$ (with the convention that \( \Gamma_{\emptyset}=\{0\} \)).

  \begin{subequations}
    \begin{itemize}
    \item 
      For any set function \( \FunctionBigF: 2^\IndexSet \to \barRR \), we have
      \begin{align}
        \SFMbi{ \bp{\FunctionBigF\circ\SupportMapping} }{\CouplingCapra}\np{\primal}
        &=
          \LFMr{ \bp{ \SFM{ \bp{\FunctionBigF\circ\SupportMapping}}{\CouplingCapra} } }
          \np{ \frac{\primal}{\TripleNorm{\primal}} }
          \eqsepv \forall \primal \in \RR^d\backslash\{0\}
          \eqfinv 
         \label{eq:Biconjugate_normalization_mapping}
           \intertext{where the function 
          \( \LFMr{ \bp{ \SFM{ \bp{\FunctionBigF\circ\SupportMapping}}{\CouplingCapra} } } \) 
          is closed convex and has the following expression as a Fenchel conjugate}
          \LFMr{ \bp{ \SFM{ \bp{\FunctionBigF\circ\SupportMapping}}{\CouplingCapra} } }
        &=
          \LFMr{ \bgp{ 
          \sup_{\IndexSubset \subset \IndexSet} \Bp{
          \CoordinateNormDual{\TripleNorm{\pi_{\IndexSubset}\np{\cdot}}}{\IndexSubset} - \FunctionBigF\np{\IndexSubset} } } }
          \eqfinv 
          \label{eq:Fenchel_of_CAPRA_conjugate_function_of_the_Support_as_conjugate_of_supremum}
          \intertext{and also has the following two equivalent expressions as a Fenchel biconjugate}
        &=
          \LFMbi{ \Bp{ \inf_{\IndexSubset \subset \IndexSet}
          \bp{ 
          \delta_{  \Gamma_{\IndexSubset}} \UppPlus \FunctionBigF\np{\IndexSubset}}}}
          \eqfinv 
          \label{eq:Biconjugate_of_min_Gamma_ind}
        \\
        &=
          \LFMbi{
          \Bp{ \primal \mapsto \inf \bset{\FunctionBigF\np{\IndexSubset}}{\primal \in \Gamma_\IndexSubset}}}
          \eqfinp
          \label{eq:Biconjugate_of_min_Gamma_ind_bis}
      \end{align}
    \item 
      For any finite-valued set function \( \FunctionBigF: 2^\IndexSet \to \RR \), 
      the function 
      \( \LFMr{ \bp{ \SFM{ \bp{\FunctionBigF\circ\SupportMapping}}{\CouplingCapra} } } \) 
      is proper convex lsc and has the following variational expression
      \begin{align}
        \LFMr{ \bp{ \SFM{ \bp{\FunctionBigF\circ\SupportMapping}}{\CouplingCapra} } }\np{\primal}
        &= 
          \min_{
          \substack{%
          \lambda \in \Delta_{\IndexSet} \\
        \primal \in \sum_{\IndexSubset\subset \IndexSet} \lambda_{\IndexSubset} \Gamma_\IndexSubset }}
        \sum_{\IndexSubset \subset \IndexSet} \lambda_{\IndexSubset} \FunctionBigF\np{\IndexSubset}
        \eqsepv \forall  \primal \in \RR^d 
        \eqfinv 
        \label{eq:biconjugate_with_Gamma}
      \end{align}
    \item
      For any finite-valued set function \( \FunctionBigF: 2^\IndexSet \to \RR
      \) such that $\FunctionBigF(\emptyset)=0$, the function 
      \( \LFMr{ \bp{ \SFM{ \bp{\FunctionBigF\circ\SupportMapping}}{\CouplingCapra} } } \) 
      is  proper convex lsc and has the following variational expression
      \begin{align}
        \LFMr{ \bp{ \SFM{ \bp{\FunctionBigF\circ\SupportMapping}}{\CouplingCapra} } }\np{\primal}
        &= 
          \min_{ \substack{%
          z \in \RR_{\IndexSet}^{\backslash\emptyset}
        \\
        \sum_{\AllSubsetsExceptEmptyset} \CoordinateNorm{\TripleNorm{z_{(\IndexSubset)}}}{\IndexSubset} \leq 1
        \\
        \sum_{\AllSubsetsExceptEmptyset} z_{(\IndexSubset)} = \primal } }
        \sum_{\AllSubsetsExceptEmptyset} \CoordinateNorm{\TripleNorm{z_{(\IndexSubset)}}}{\IndexSubset} \FunctionBigF\np{\IndexSubset}
        \eqsepv \forall \primal \in \RR^d
        \eqfinv
        \label{eq:Fenchel_of_CAPRA_conjugate_function_of_the_Support_variational_expression}
      \end{align}
      \label{eq:Fenchel_of_CAPRA_conjugate_function_of_the_Support}
      and the $\CouplingCapra$-biconjugate function \( \SFMbi{  \bp{\FunctionBigF\circ\SupportMapping} }{\CouplingCapra} \)
      has the following variational expression
    \end{itemize}
     \begin{equation}
        \SFMbi{  \bp{\FunctionBigF\circ\SupportMapping} }{\CouplingCapra}\np{\primal}
        =
        \frac{ 1 }{ \TripleNorm{\primal} } 
        \min_{
          \substack{
            z \in \RR_{\IndexSet}^{\backslash\emptyset}
            \\
            \sum_{\AllSubsetsExceptEmptyset} \CoordinateNorm{\TripleNorm{z_{(\IndexSubset)}}}{\IndexSubset} \leq \TripleNorm{\primal}
            \\
            \sum_{\AllSubsetsExceptEmptyset} z_{(\IndexSubset)} = \primal }
        }
        \sum_{\AllSubsetsExceptEmptyset} \CoordinateNorm{\TripleNorm{z_{(\IndexSubset)}}}{\IndexSubset} \FunctionBigF\np{\IndexSubset}
        \eqsepv \forall \primal \in \RR^d\backslash \na{0}
        \eqfinp
        \label{eq:CAPRA_biconjugate_function_of_the_Support_variational_expression}
      \end{equation}
   \end{subequations}
\end{proposition}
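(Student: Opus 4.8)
The plan is to read off every formula of the statement from the $\CouplingCapra$-conjugate already computed in Proposition~\ref{pr:support_mapping_conjugate}, by unfolding the definition of the $\CouplingCapra$-biconjugate and then transferring the computation to the Fenchel-conjugacy side on~$\RR^d$. First I would establish~\eqref{eq:Biconjugate_normalization_mapping} and~\eqref{eq:Fenchel_of_CAPRA_conjugate_function_of_the_Support_as_conjugate_of_supremum}. For $\primal\neq 0$ one has $\CouplingCapra\np{\primal,\dual}=\proscal{\normalized\np{\primal}}{\dual}$ by Definition~\ref{de:coupling_CAPRA} and~\eqref{eq:normalization_mapping}, so unfolding the $\CouplingCapra$-biconjugate~\eqref{eq:Fenchel-Moreau_biconjugate} gives $\SFMbi{\np{\FunctionBigF\circ\SupportMapping}}{\CouplingCapra}\np{\primal}=\sup_{\dual\in\RR^d}\Bp{\proscal{\normalized\np{\primal}}{\dual}\LowPlus\bp{-\SFM{\np{\FunctionBigF\circ\SupportMapping}}{\CouplingCapra}\np{\dual}}}=\LFMr{\Bp{\SFM{\np{\FunctionBigF\circ\SupportMapping}}{\CouplingCapra}}}\np{\normalized\np{\primal}}$, which is~\eqref{eq:Biconjugate_normalization_mapping}. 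By~\eqref{eq:CAPRA_conjugate_function_of_the_Support}, that conjugate equals $\sup_{\IndexSubset\subset\IndexSet}\bp{\CoordinateNormDual{\TripleNorm{\pi_{\IndexSubset}\np{\cdot}}}{\IndexSubset}-\FunctionBigF\np{\IndexSubset}}$, a supremum of functions each of which is a seminorm on~$\RR^d$ minus a constant --- hence proper convex lsc, or identically $-\infty$ when $\FunctionBigF\np{\IndexSubset}=+\infty$ --- so its Fenchel conjugate is closed convex (and proper convex lsc as soon as $\FunctionBigF$ is finite-valued), which is~\eqref{eq:Fenchel_of_CAPRA_conjugate_function_of_the_Support_as_conjugate_of_supremum}.

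Next I would apply the classical formula for the Fenchel conjugate of a supremum of proper convex lsc functions (see~\cite[Equation~\eqref{CAPRA-eq:Fenchel_conjugate_of_the_supremum_of_proper_convex_functions}]{Chancelier-DeLara:2020_CAPRA_OPTIMIZATION}), namely $\LFMr{\bp{\sup_{\IndexSubset}g_{\IndexSubset}}}=\LFMbi{\bp{\inf_{\IndexSubset}\LFMr{g_{\IndexSubset}}}}$. The computation at the heart of this step is that the Fenchel conjugate of the seminorm $\dual\mapsto\CoordinateNormDual{\TripleNorm{\pi_{\IndexSubset}\np{\dual}}}{\IndexSubset}$ equals $\delta_{\CoordinateNorm{\TripleNormBall}{\IndexSubset}}$: the condition $\proscal{\primal}{\dual}\leq\CoordinateNormDual{\TripleNorm{\pi_{\IndexSubset}\np{\dual}}}{\IndexSubset}$ for all $\dual\in\RR^d$, tested against the kernel~$\FlatRR_{-\IndexSubset}$ of the seminorm, forces $\primal\in\FlatRR_{\IndexSubset}$, and then, by self-duality~\eqref{eq:orthogonal_projection_self-dual} of $\pi_{\IndexSubset}$ and the duality~\eqref{eq:coordinate_norm_definition} defining $\CoordinateNorm{\TripleNorm{\cdot}}{\IndexSubset}$, it is equivalent to $\CoordinateNorm{\TripleNorm{\primal}}{\IndexSubset}\leq 1$. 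Restoring the constants $\FunctionBigF\np{\IndexSubset}$ (with the appropriate Moreau addition) turns $\inf_{\IndexSubset}\LFMr{g_{\IndexSubset}}$ into $\inf_{\IndexSubset}\bp{\delta_{\CoordinateNorm{\TripleNormBall}{\IndexSubset}}\UppPlus\FunctionBigF\np{\IndexSubset}}$; since a Fenchel biconjugate depends only on the closed convex hull of the epigraph and $\closedconvexhull{\np{\Gamma_{\IndexSubset}}}=\CoordinateNorm{\TripleNormBall}{\IndexSubset}$, I may replace each ball by $\Gamma_{\IndexSubset}$, which gives~\eqref{eq:Biconjugate_of_min_Gamma_ind}, while~\eqref{eq:Biconjugate_of_min_Gamma_ind_bis} is the very same function rewritten as $\primal\mapsto\inf\bset{\FunctionBigF\np{\IndexSubset}}{\primal\in\Gamma_{\IndexSubset}}$.

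For the variational identities, assume $\FunctionBigF$ finite-valued. Describing the Fenchel biconjugate of $\inf_{\IndexSubset}\bp{\delta_{\CoordinateNorm{\TripleNormBall}{\IndexSubset}}\UppPlus\FunctionBigF\np{\IndexSubset}}$ as (the lsc hull of) the convex hull of the infimum yields $\primal\mapsto\inf\bset{\sum_{\IndexSubset\subset\IndexSet}\lambda_{\IndexSubset}\FunctionBigF\np{\IndexSubset}}{\lambda\in\Delta_{\IndexSet},\ \primal\in\sum_{\IndexSubset\subset\IndexSet}\lambda_{\IndexSubset}\CoordinateNorm{\TripleNormBall}{\IndexSubset}}$; compactness of $\Delta_{\IndexSet}$ and of the unit balls $\CoordinateNorm{\TripleNormBall}{\IndexSubset}$ makes this a proper convex lsc function and turns the infimum into a minimum, and $\closedconvexhull{\np{\Gamma_{\IndexSubset}}}=\CoordinateNorm{\TripleNormBall}{\IndexSubset}$ lets me re-express the constraint with the~$\Gamma_{\IndexSubset}$, which is~\eqref{eq:biconjugate_with_Gamma}. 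When moreover $\FunctionBigF\np{\emptyset}=0$, the slot $\IndexSubset=\emptyset$ contributes $0$ to the objective and, since $\CoordinateNorm{\TripleNormBall}{\emptyset}=\na{0}$, nothing to $\primal$, so it acts as pure slack and the constraint $\sum_{\IndexSubset\subset\IndexSet}\lambda_{\IndexSubset}=1$ relaxes to $\sum_{\AllSubsetsExceptEmptyset}\lambda_{\IndexSubset}\leq 1$; setting $z_{(\IndexSubset)}=\lambda_{\IndexSubset}y_{\IndexSubset}\in\FlatRR_{\IndexSubset}$ with $y_{\IndexSubset}\in\CoordinateNorm{\TripleNormBall}{\IndexSubset}$, so that $\CoordinateNorm{\TripleNorm{z_{(\IndexSubset)}}}{\IndexSubset}=\lambda_{\IndexSubset}\CoordinateNorm{\TripleNorm{y_{\IndexSubset}}}{\IndexSubset}\leq\lambda_{\IndexSubset}$, and conversely recovering $\lambda_{\IndexSubset}=\CoordinateNorm{\TripleNorm{z_{(\IndexSubset)}}}{\IndexSubset}$ from a given~$z$, one identifies the minimum of~\eqref{eq:biconjugate_with_Gamma} with that in~\eqref{eq:Fenchel_of_CAPRA_conjugate_function_of_the_Support_variational_expression}; finally~\eqref{eq:CAPRA_biconjugate_function_of_the_Support_variational_expression} follows by inserting $\normalized\np{\primal}=\primal/\TripleNorm{\primal}$ into~\eqref{eq:Fenchel_of_CAPRA_conjugate_function_of_the_Support_variational_expression} through~\eqref{eq:Biconjugate_normalization_mapping} and rescaling each $z_{(\IndexSubset)}$ by~$\TripleNorm{\primal}$.

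The step I expect to be the main obstacle is the passage from the abstract biconjugacy identity to the explicit \emph{minima} in~\eqref{eq:biconjugate_with_Gamma}--\eqref{eq:CAPRA_biconjugate_function_of_the_Support_variational_expression}: one must check that, under finiteness of $\FunctionBigF$ and compactness of the balls $\CoordinateNorm{\TripleNormBall}{\IndexSubset}$, the biconjugate of the infimum is exactly the convex-combination infimum (not merely its lsc hull) and that this infimum is attained, that substituting $\Gamma_{\IndexSubset}$ for $\CoordinateNorm{\TripleNormBall}{\IndexSubset}$ does not alter the biconjugate, and that the reparametrization $\lambda_{\IndexSubset}\leftrightarrow\CoordinateNorm{\TripleNorm{z_{(\IndexSubset)}}}{\IndexSubset}$ preserves the optimal value --- the last point being where the normalization $\FunctionBigF\np{\emptyset}=0$ (and, in the intended application, monotonicity of~$\FunctionBigF$, which forces $\FunctionBigF\geq 0$) enters.
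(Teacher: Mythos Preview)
Your overall strategy is the paper's: take the Fenchel conjugate of the $\CouplingCapra$-conjugate computed in Proposition~\ref{pr:support_mapping_conjugate}, rewrite it via the duality between support functions $\sigma_{\Gamma_\IndexSubset}=\sigma_{\CoordinateNorm{\TripleNormBall}{\IndexSubset}}$ and indicators, and pass to the variational form. The derivations of~\eqref{eq:Biconjugate_normalization_mapping}--\eqref{eq:Biconjugate_of_min_Gamma_ind_bis} are essentially identical to the paper's (the paper reaches~\eqref{eq:Biconjugate_of_min_Gamma_ind} by writing $\sigma_{\Gamma_\IndexSubset}=\LFM{\delta_{\Gamma_\IndexSubset}}$ and using that conjugacies swap infima and suprema, which is the same computation as your ``conjugate of a supremum'' formula). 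For~\eqref{eq:biconjugate_with_Gamma} the paper invokes~\cite[Corollary~2.8.11]{Zalinescu:2002} for the conjugate of a finite supremum of proper convex functions, which directly yields a $\min$ over~$\Delta_{\IndexSet}$; your convex-hull-plus-compactness argument reaches the same place.

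The substantive difference is the passage from~\eqref{eq:biconjugate_with_Gamma} to~\eqref{eq:Fenchel_of_CAPRA_conjugate_function_of_the_Support_variational_expression}. You apply~\eqref{eq:biconjugate_with_Gamma} with $\Gamma_\IndexSubset=\CoordinateNorm{\TripleNormBall}{\IndexSubset}$ and put $z_{(\IndexSubset)}=\lambda_\IndexSubset y_\IndexSubset$ with $y_\IndexSubset$ in the \emph{ball}; this gives only $\CoordinateNorm{\TripleNorm{z_{(\IndexSubset)}}}{\IndexSubset}\le\lambda_\IndexSubset$, so the objectives $\sum_\IndexSubset\lambda_\IndexSubset\FunctionBigF(\IndexSubset)$ and $\sum_\IndexSubset\CoordinateNorm{\TripleNorm{z_{(\IndexSubset)}}}{\IndexSubset}\FunctionBigF(\IndexSubset)$ need not agree, and you are forced to invoke $\FunctionBigF\ge 0$ --- which is not assumed. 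The paper instead specializes~\eqref{eq:biconjugate_with_Gamma} to $\Gamma_\IndexSubset=\CoordinateNorm{\TripleNormSphere}{\IndexSubset}$ (licit since $\closedconvexhull\np{\CoordinateNorm{\TripleNormSphere}{\IndexSubset}}=\CoordinateNorm{\TripleNormBall}{\IndexSubset}$): with $z'_{(\IndexSubset)}$ on the unit \emph{sphere} one has $\CoordinateNorm{\TripleNorm{\lambda_\IndexSubset z'_{(\IndexSubset)}}}{\IndexSubset}=\lambda_\IndexSubset$ exactly, and the substitution $z_{(\IndexSubset)}=\lambda_\IndexSubset z'_{(\IndexSubset)}$ becomes a bijection between the two minimization problems, no sign condition on~$\FunctionBigF$ required. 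So the obstacle you flag is resolved in the paper not by adding hypotheses but by the choice of spheres over balls when invoking~\eqref{eq:biconjugate_with_Gamma}.
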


\begin{proof} 

   \noindent $\bullet$ 
  We consider the set function \( \FunctionBigF: 2^\IndexSet \to \barRR \).
By~\eqref{eq:CAPRA_Fenchel-Moreau_biconjugate} and the definition~\eqref{eq:normalization_mapping},
of the normalization mapping~$\normalized$, we immediately
get~\eqref{eq:Biconjugate_normalization_mapping}.
\medskip

 \noindent $\bullet$ 
  We consider the set function \( \FunctionBigF: 2^\IndexSet \to \barRR \). 
  We have
  \begin{align*}
    \LFMr{ \bp{ \SFM{ \bp{\FunctionBigF\circ\SupportMapping}}{\CouplingCapra} } }
    &= 
      \LFMr{ \Bp{ \sup_{\IndexSubset \subset \IndexSet} 
      \bp{ \CoordinateNormDual{\TripleNorm{\pi_\IndexSubset\np{\cdot}}}{\IndexSubset} -\FunctionBigF\np{\IndexSubset} } } }
      \tag{by~\eqref{eq:CAPRA_conjugate_function_of_the_Support} }
      \intertext{so that we have proved~\eqref{eq:Fenchel_of_CAPRA_conjugate_function_of_the_Support_as_conjugate_of_supremum} }
    &=
      \LFMr{ \Bp{ \sup_{\IndexSubset \subset \IndexSet} \bp{ \sigma_{ \CoordinateNorm{\TripleNormBall}{\IndexSubset} } -\FunctionBigF\np{\IndexSubset} } } }
      \tag{by~\eqref{eq:norm_dual_norm} as \( \CoordinateNorm{\TripleNormBall}{\IndexSubset}
      \) is the unit ball of the norm~\( \CoordinateNorm{\TripleNorm{\cdot}}{\IndexSubset} \)
      by~\eqref{eq:coordinate_norm_unit_ball}, and with the convention 
      \( \CoordinateNorm{\TripleNormBall}{\emptyset} =\{0\} \)}
    \\
    &=
      \LFMr{ \Bp{ \sup_{\IndexSubset \subset \IndexSet} \bp{ 
      \LFM{ \delta_{ \Gamma_\IndexSubset }} - \FunctionBigF\np{\IndexSubset} } } }
      \intertext{because 
      \( \LFM{ \delta_{  \Gamma_\IndexSubset }} =
      \sigma_{ \Gamma_\IndexSubset} = \sigma_{ \closedconvexhull\np{\Gamma_\IndexSubset}}=
      \sigma_{ \CoordinateNorm{\TripleNormBall}{\IndexSubset} } \), where the
      last equality comes from the assumption \( \closedconvexhull\np{\Gamma_\IndexSubset}
      =  \CoordinateNorm{\TripleNormBall}{\IndexSubset} \)}
    &=
      \LFMr{ \Bp{ \sup_{\IndexSubset \subset \IndexSet} \bp{ 
      \LFM{ \bp{ \delta_{  \Gamma_\IndexSubset } \UppPlus \FunctionBigF\np{\IndexSubset} } } } } }
      \tag{by property of conjugacies}
    \\
    &=
      \LFMr{ \Bp{ \LFM{ \Bp{ \inf_{\IndexSubset \subset \IndexSet} \bp{ 
      \delta_{\Gamma_\IndexSubset } \UppPlus \FunctionBigF\np{\IndexSubset} } } } } }
      \intertext{as conjugacies, being dualities, turn infima into suprema}
    &=
      \LFMbi{ \Bp{ \inf_{\IndexSubset \subset \IndexSet} \bp{ 
      \delta_{\Gamma_\IndexSubset } \UppPlus \FunctionBigF\np{\IndexSubset} } } }
      \tag{by definition~\eqref{eq:Fenchel_biconjugate} of the Fenchel biconjugate}
      \eqfinp
  \end{align*}
  Thus, we have obtained~\eqref{eq:Biconjugate_of_min_Gamma_ind}
  and \eqref{eq:Biconjugate_of_min_Gamma_ind_bis}.
  
  \noindent $\bullet$ 
  We consider the set function \( \FunctionBigF: 2^\IndexSet \to \RR \). 
  For the remaining expressions for
  \( \LFMr{ \bp{ \SFM{ \bp{\FunctionBigF\circ\SupportMapping}}{\CouplingCapra} } } \),
  we use a general formula 
  \cite[Corollary~2.8.11]{Zalinescu:2002}
  for the Fenchel conjugate of the supremum of proper convex functions
  \(\fonctionprimal_{\IndexSubset}: \RR^d \to \barRR \), $\IndexSubset \subset \IndexSet$:
  \begin{equation}
    \label{eq:Fenchel_conjugate_of_the_supremum_of_proper_convex_functions}
    \bigcap_{\IndexSubset \subset \IndexSet} \dom\fonctionprimal_{\IndexSubset} \neq \emptyset
    \implies
    \LFM{ \bp{ \sup_{\IndexSubset \subset \IndexSet} \fonctionprimal_{\IndexSubset} } }
    =
    \min_{\np{\lambda_\IndexSubset}_{\IndexSubset\subset \IndexSet}\in \Delta_{\IndexSet}} 
    \LFM{ \Bp{ \sum_{\IndexSubset\subset \IndexSet} \lambda_{\IndexSubset} \fonctionprimal_{\IndexSubset} } }
    \eqfinp
  \end{equation}
  Thus, we obtain 
  \begin{align*}
    \LFMr{ \Bp{ \SFM{ \bp{\FunctionBigF\circ\SupportMapping}}{\CouplingCapra} } }
    &= 
      \LFMr{ \Bp{ 
      \sup_{\IndexSubset \subset \IndexSet} \bp{ \CoordinateNormDual{\TripleNorm{\cdot}}{\IndexSubset} -\FunctionBigF\np{\IndexSubset} } } }
      \tag{by~\eqref{eq:CAPRA_conjugate_function_of_the_Support} }
    \\
    &=
      \LFMr{ \Bp{ \sup_{\IndexSubset \subset \IndexSet} 
      \bp{ \sigma_{ \CoordinateNorm{\TripleNormBall}{\IndexSubset} } -\FunctionBigF\np{\IndexSubset} } } }
      \tag{by~\eqref{eq:norm_dual_norm} as \( \CoordinateNorm{\TripleNormBall}{\IndexSubset}
      \) is the unit ball of the norm~\( \CoordinateNorm{\TripleNorm{\cdot}}{\IndexSubset} \)
      by~\eqref{eq:coordinate_norm_unit_ball}, and with the convention 
      \( \CoordinateNorm{\TripleNormBall}{\emptyset} =\{0\} \)}
    \\
    &=
      \LFMr{ \Bp{ \sup_{\IndexSubset \subset \IndexSet} 
      \bp{ \sigma_{ \Gamma_\IndexSubset} -\FunctionBigF\np{\IndexSubset} } } }
      \tag{as \( \closedconvexhull\np{\Gamma_\IndexSubset}=
      \CoordinateNorm{\TripleNormBall}{\IndexSubset} \) by assumption}
    \\
    &=
      \min_{\np{\lambda_\IndexSubset}_{\IndexSubset\subset \IndexSet}\in \Delta_{\IndexSet}} 
      \LFMr{ \Bp{ \sum_{ \IndexSubset\subset \IndexSet} \lambda_{\IndexSubset} 
      \bp{ \sigma_{ \Gamma_\IndexSubset } -\FunctionBigF\np{\IndexSubset} } } } 
      \tag{
      by~\eqref{eq:Fenchel_conjugate_of_the_supremum_of_proper_convex_functions} }
      \intertext{as the functions
      \(\fonctionprimal_{\IndexSubset} = \sigma_{ \Gamma_\IndexSubset } -\FunctionBigF\np{\IndexSubset} \)
      are proper convex (they even take finite values) for $\IndexSubset \subset \IndexSet$}
    &=
      \min_{\np{\lambda_\IndexSubset}_{\IndexSubset\subset \IndexSet}\in \Delta_{\IndexSet}} 
      \LFMr{ \Bp{ { \sigma_{ \sum_{\IndexSubset\subset \IndexSet} \lambda_{\IndexSubset} \Gamma_\IndexSubset }
      - \sum_{\IndexSubset\subset\IndexSet} \lambda_{\IndexSubset} \FunctionBigF\np{\IndexSubset} 
      } } } 
      \intertext{as, for all $\IndexSubset\subset \IndexSet$, 
      \( \lambda_{\IndexSubset} \sigma_{ \Gamma_\IndexSubset } = 
      \sigma_{ \lambda_{\IndexSubset} \Gamma_\IndexSubset } \) since \( \lambda_{\IndexSubset} \geq 0 \), 
      and then using the well-known property that the support function of 
      a Minkowski sum of subsets is the sum of the support functions of 
      the individual subsets \cite[p.~226]{Hiriart-Urruty-Lemarechal-I:1993}}
    &=
      \min_{\np{\lambda_\IndexSubset}_{\IndexSubset\subset \IndexSet}\in \Delta_{\IndexSet}} 
      \Bp{ \LFMr{ \bp{ \sigma_{ \sum_{\IndexSubset\subset \IndexSet} \lambda_{\IndexSubset}
      \Gamma_\IndexSubset } } }
      + \sum_{\IndexSubset\subset \IndexSet} \lambda_{\IndexSubset} \FunctionBigF\np{\IndexSubset} 
      } 
      \tag{by property of conjugacies} 
    \\
    &=
      \min_{\np{\lambda_\IndexSubset}_{\IndexSubset\subset \IndexSet}\in \Delta_{\IndexSet}}
      \Bp{ \delta_{ \sum_{{\IndexSubset\subset \IndexSet}} \lambda_{\IndexSubset} \Gamma_\IndexSubset } 
      +
      \sum_{\IndexSubset\subset \IndexSet} \lambda_{\IndexSubset}
      \FunctionBigF\np{\IndexSubset} }
      \eqfinp
      \tag{because \( \sum_{\IndexSubset\subset \IndexSet} \lambda_{\IndexSubset}
      \Gamma_\IndexSubset \)
      is a closed convex set }
      \intertext{Therefore, for all \( \primal \in \RR^d \), we have }
      \LFMr{ \bp{ \SFM{ \bp{\FunctionBigF\circ\SupportMapping}}{\CouplingCapra} } }\np{\primal}
    &= 
      \min_{
      \substack{
      \np{\lambda_\IndexSubset}_{\IndexSubset\subset \IndexSet}\in \Delta_{\IndexSet} \\
    \primal \in \sum_{\IndexSubset\subset \IndexSet}\lambda_{\IndexSubset} \Gamma_\IndexSubset}}
    \sum_{\IndexSubset\subset \IndexSet} \lambda_{\IndexSubset} \FunctionBigF\np{\IndexSubset} 
    \eqfinv
  \end{align*}
  which is~\eqref{eq:biconjugate_with_Gamma}.
  \medskip
  
  \noindent $\bullet$ 
  We consider the set function \( \FunctionBigF: 2^\IndexSet \to \RR \), 
such that $\FunctionBigF(\emptyset)=0$. 
We are going to
  prove~\eqref{eq:Fenchel_of_CAPRA_conjugate_function_of_the_Support_variational_expression}.
  For this purpose, we start from the already proven
  Equation~\eqref{eq:biconjugate_with_Gamma} where we choose 
  $\Gamma_\IndexSubset = \CoordinateNorm{\TripleNormSphere}{\IndexSubset}$ for all $\IndexSubset \subset \IndexSet$
  (except $K=\emptyset$ for which $\Gamma_\IndexSubset=\na{0}$),
  which is licit since $\closedconvexhull\np{\CoordinateNorm{\TripleNormSphere}{\IndexSubset}}= \CoordinateNorm{\TripleNormBall}{\IndexSubset}$. We obtain
  \begin{align*}
    \LFMr{ \bp{ \SFM{ \bp{\FunctionBigF\circ\SupportMapping}}{\CouplingCapra} } }\np{\primal} 
    &=
      \min_{
      \substack{
      \lambda\in \Delta_{\IndexSet} \\
    \primal \in \sum_{\IndexSubset\subset \IndexSet}\lambda_{\IndexSubset} \Gamma_\IndexSubset}}
    \sum_{\IndexSubset\subset \IndexSet} \lambda_{\IndexSubset} \FunctionBigF\np{\IndexSubset}
    \tag{by \eqref{eq:biconjugate_with_Gamma}}
    \\
    &=
      \min_{
      \substack{
      \lambda\in \SimplexWithoutEmptySet \\
    \primal \in \sum_{\AllSubsetsExceptEmptyset}\lambda_{\IndexSubset} \Gamma_\IndexSubset}}
    \sum_{\AllSubsetsExceptEmptyset} \lambda_{\IndexSubset} \FunctionBigF\np{\IndexSubset}
    \tag{since $\FunctionBigF(\emptyset)=0$ and $\Gamma_{\emptyset}=\{0\}$}
    \\
    &=
      \min_{
      \substack{
      \lambda\in \SimplexWithoutEmptySet
    \\
    \primal \in \sum_{\AllSubsetsExceptEmptyset}\lambda_{\IndexSubset} \CoordinateNorm{\TripleNormSphere}{\IndexSubset}}}
    \sum_{\AllSubsetsExceptEmptyset} \lambda_{\IndexSubset} \FunctionBigF\np{\IndexSubset}
    \tag{using $\Gamma_\IndexSubset = \CoordinateNorm{\TripleNormSphere}{\IndexSubset}$}
    \\
    &= 
      \min_{ \substack{%
      z' \in \CoordinateNorm{\TripleNormSphere}{\cdot},
      \lambda \in \SimplexWithoutEmptySet
    \\
    \sum_{\AllSubsetsExceptEmptyset} \lambda_{\IndexSubset} z'_{({\IndexSubset})} = \primal
    } }  
    \sum_{\AllSubsetsExceptEmptyset} \lambda_{\IndexSubset} \FunctionBigF\np{\IndexSubset}
    \tag{getting rid of $\lambda_{\emptyset}$ since $\FunctionBigF(\emptyset)=0$}
    \\
    &=
      \min_{ \substack{%
      z \in \RR_{\IndexSet}^{\backslash\emptyset}
    \\
    \sum_{\AllSubsetsExceptEmptyset} \CoordinateNorm{\TripleNorm{z_{({\IndexSubset})}}}{\IndexSubset} \leq 1 
    \\
    \sum_{\AllSubsetsExceptEmptyset} z_{({\IndexSubset})} = \primal
    } }
    \sum_{\AllSubsetsExceptEmptyset}\CoordinateNorm{\TripleNorm{z_{({\IndexSubset})}}}{\IndexSubset} \FunctionBigF\np{\IndexSubset} 
    \eqfinv
  \end{align*}
  by putting \( z_{({\IndexSubset})} = \lambda_{\IndexSubset} z'_{({\IndexSubset})}\),
  for all $\emptyset \subsetneq\IndexSubset\subset \IndexSet$.
  Thus, we have obtained~\eqref{eq:Fenchel_of_CAPRA_conjugate_function_of_the_Support_variational_expression}.

  Finally, from \( \SFMbi{ \bp{\FunctionBigF\circ\SupportMapping}}{\CouplingCapra}
  =\LFMr{ \bp{ \SFM{ \bp{\FunctionBigF\circ\SupportMapping}}{\CouplingCapra} } }
  \circ \normalized \), by~\eqref{eq:CAPRA_Fenchel-Moreau_biconjugate},
  we get that 
  \begin{equation*}
    \SFMbi{ \bp{\FunctionBigF\circ\SupportMapping} }{\CouplingCapra}\np{\primal} 
    = \frac{ 1 }{ \TripleNorm{\primal} } 
    \min_{ \substack{%
        z \in \RR_{\IndexSet}^{\backslash\emptyset}
        \\
        \sum_{\AllSubsetsExceptEmptyset}
        \CoordinateNorm{\TripleNorm{z_{(\IndexSubset)}}}{\IndexSubset} \leq \TripleNorm{\primal}
        \\
        \sum_{\AllSubsetsExceptEmptyset} z_{(\IndexSubset)} = \primal  } }
    \sum_{\AllSubsetsExceptEmptyset}
    \CoordinateNorm{\TripleNorm{z_{(\IndexSubset)}}}{\IndexSubset} \FunctionBigF\np{\IndexSubset} 
    \eqsepv \forall \primal \in \RR^d\backslash\{0\}
    \eqfinp 
  \end{equation*}
  Therefore, we have proved~\eqref{eq:CAPRA_biconjugate_function_of_the_Support_variational_expression}.
  \medskip

  This ends the proof.
\end{proof}

\subsection{Proof of Theorem~\ref{th:support_mapping_conjugate} }
\label{Proof_of_Theorem}


\begin{proof}
  %
  By~\eqref{eq:CAPRA_conjugate_function_of_the_Support}, we have
  that \( \SFM{ \np{ \FunctionBigF \circ \SupportMapping } }{\CouplingCapra} =
  \sup_{\IndexSubset\subset \IndexSet} 
  \Bp{
    \CoordinateNormDual{\TripleNorm{\cdot}}{\IndexSubset}-\FunctionBigF\np{\IndexSubset}
  } \), where \( \CoordinateNormDual{\TripleNorm{\cdot}}{\IndexSubset}
  =
  \TopDualNorm{\TripleNorm{\cdot}}{\IndexSubset} \)
by Equation~\eqref{eq:dual_coordinate-k_norm_=_generalized_top-k_norm} 
in Proposition~\ref{pr:dual_coordinate-k_norm_=_generalized_top-k_norm}
since the norm $\TripleNorm{\cdot}$ is orthant-monotonic.
Hence, we obtain~\eqref{eq:conjugate_l0norm_TopDualNorm}. 

  As, by assumption, both the norm $\TripleNorm{\cdot}$ and the dual norm $\TripleNormDual{\cdot}$
  are orthant-strictly monotonic, 
  Proposition~\ref{pr:nonempty_subdifferential} applies.
  Therefore, for any vector~\( \primal \in \RR^d \)
  and any \( \dual \in \partial_{\CouplingCapra}\np{ \FunctionBigF \circ \SupportMapping }\np{\primal}  \neq \emptyset \),
  we obtain
  \begin{align*}
    \SFMbi{\np{ \FunctionBigF \circ \SupportMapping }}{\CouplingCapra}\np{\primal} 
    & \geq 
      \CouplingCapra\np{\primal,\dual} \LowPlus
      \bp{ - \SFM{\np{ \FunctionBigF \circ \SupportMapping }}{\CouplingCapra}\np{\dual} }
      \tag{by definition~\eqref{eq:Fenchel-Moreau_biconjugate} of the biconjugate}
    \\
    &=
      \CouplingCapra\np{\primal,\dual} 
      - \SFM{\np{ \FunctionBigF \circ \SupportMapping }}{\CouplingCapra}\np{\dual}
      \tag{because \( -\infty < \CouplingCapra\np{\primal,\dual} < +\infty \) by~\eqref{eq:coupling_CAPRA} }
    \\
    &=
      \CouplingCapra\np{\primal,\dual} 
      - \bp{ \CouplingCapra\np{\primal,\dual} - \np{ \FunctionBigF \circ \SupportMapping }\np{\primal} }
      \tag{by definition~\eqref{eq:Capra-subdifferential_b} 
      of the \Capra-subdifferential \( \partial_{\CouplingCapra}\np{ \FunctionBigF \circ \SupportMapping }\np{\primal} \)}
    \\
    &=
      \np{ \FunctionBigF \circ \SupportMapping }\np{\primal} 
      \eqfinp
  \end{align*}
  On the other hand, we have that 
  \( \SFMbi{\np{ \FunctionBigF \circ \SupportMapping }}{\CouplingCapra}\np{\primal} \leq \np{ \FunctionBigF \circ \SupportMapping }\np{\primal} \) 
  by~\eqref{eq:galois-cor}.
  We conclude that \( \SFMbi{\np{ \FunctionBigF \circ \SupportMapping }}{\CouplingCapra}\np{\primal} =
  \np{ \FunctionBigF \circ \SupportMapping }\np{\primal} \), which is~\eqref{eq:biconjugate_l0norm_TopDualNorm}.  
  \medskip

  This ends the proof.
\end{proof}

\section{Hidden convexity and variational formulations for nondecreasing \FSM}
\label{Hidden_convexity_and_variational_formulation_for_the_pseudo_norm}

In this
Sect.~\ref{Hidden_convexity_and_variational_formulation_for_the_pseudo_norm},
we suppose that both the source norm~$\TripleNorm{\cdot}$ and the dual norm~$\TripleNormDual{\cdot}$
are orthant-strictly monotonic.
From our main result obtained
in Sect.~\ref{The_Capra_conjugacy_under_orthant-strict_monotonicity}
--- namely, Theorem~\ref{th:support_mapping_conjugate} which provides
conditions under which a function of the support mapping (\FSM) is a \Capra-convex function ---
we will derive the following results.
In~\S\ref{Hidden_convexity_in_the_support_mapping}, 
we show that any nondecreasing finite-valued \FSM\
coincides, on the unit sphere~$\TripleNormSphere =
\bset{\primal \in \RR^d}{\TripleNorm{\primal} = 1} $, 
with a proper convex lsc function on~$\RR^d$.
In~\S\ref{Variational_formulation_for_the_pseudo_norm}, 
we deduce that a nondecreasing finite-valued \FSM\
taking the value~$0$ on the null vector
admits a variational formula 
which involves the whole family of generalized local-$\IndexSubset$-support
dual~norms in Definition~\ref{de:top_dual_norm}.
Then, in~\S\ref{Upper_and_lower_bounds_for_the_support_mapping_as_norm_ratios},
we show that the variational formulation obtained 
yields a family of lower and upper bounds for suitable nondecreasing
\FSM, as a ratio between two norms.




\subsection{Hidden convexity in nondecreasing \FSM }
\label{Hidden_convexity_in_the_support_mapping}

We now present a (rather unexpected) consequence of the just established
Theorem~\ref{th:support_mapping_conjugate}.

\begin{proposition}
  \label{pr:calL0}

 Let $\TripleNorm{\cdot}$ be the source norm with 
associated coupling $\CouplingCapra$, as in Definition~\ref{de:coupling_CAPRA},
and with associated families 
 \( \sequence{\TopDualNorm{\TripleNorm{\cdot}}{\IndexSubset}}{\IndexSubset \subset \IndexSet} \)
  of generalized local-top-$\IndexSubset$ dual~norms
and
 \( \sequence{\SupportDualNorm{\TripleNorm{\cdot}}{\IndexSubset}}{\IndexSubset \subset \IndexSet} \) 
of generalized local-$\IndexSubset$-support dual~norms,
  as in Definition~\ref{de:top_dual_norm}. 

Suppose that both the norm $\TripleNorm{\cdot}$ and the dual norm $\TripleNormDual{\cdot}$
  are orthant-strictly monotonic.
  \begin{subequations}
    For any nondecreasing finite-valued set function \( \FunctionBigF :
    2^\IndexSet \to \RR \), the following statements hold true.
    \begin{description}
    \item[$(i)$\label{pr:calL0-first}]
      The following function~${\cal L}_{0}^{\FunctionBigF} : \RR^d \to \barRR $, defined by
      \begin{equation}
        {\cal L}_{0}^{\FunctionBigF} = \LFMr{ \bp{ \SFM{\np{ \FunctionBigF \circ \SupportMapping }}{\CouplingCapra} } }
        \eqfinv
        \label{eq:definition_calL0}
      \end{equation}
      is proper convex lsc.
    \item[$(ii)$]\label{pr:calL0-second}
      The function~$\FunctionBigF \circ \SupportMapping$ coincides, on the unit sphere~$\TripleNormSphere =
      \bset{\primal \in \RR^d}{\TripleNorm{\primal} = 1} $, 
with the function~${\cal L}_{0}^{\FunctionBigF}$ in~\eqref{eq:definition_calL0}:
      \begin{equation}
        \np{ \FunctionBigF \circ \SupportMapping }\np{\primal} = {\cal L}_{0}^{\FunctionBigF}\np{\primal} 
        \eqsepv 
        \forall \primal \in \TripleNormSphere 
        \eqfinp
        \label{eq:support_mapping_convex_sphere}
      \end{equation}
    \item[$(iii)$]\label{pr:calL0-third}
      The function~$\FunctionBigF \circ \SupportMapping$ can be expressed as the 
      composition of the proper convex lsc function~${\cal L}_{0}^{\FunctionBigF}$
      in~\eqref{eq:definition_calL0}
      with the 
      normalization mapping~$\normalized$ in~\eqref{eq:normalization_mapping}:
      \begin{equation}
        \np{ \FunctionBigF \circ \SupportMapping }\np{\primal} = 
        {\cal L}_{0}^{\FunctionBigF} \np{ \frac{ \primal }{ \TripleNorm{\primal} } } 
        \eqsepv 
        \forall \primal \in \RR^d\backslash\{0\}
        \eqfinp 
        \label{eq:support_mapping_convex_normalization_mapping}
      \end{equation}
    \item[$(iv)$]\label{pr:calL0-four}
      The proper convex lsc function~${\cal L}_{0}^{\FunctionBigF}$
      in~\eqref{eq:definition_calL0} is given by
      \begin{equation}
        {\cal L}_{0}^{\FunctionBigF}= 
        \LFMr{ \Bp{ \sup_{\IndexSubset\subset \IndexSet} 
            \Bc{ \TopDualNorm{\TripleNorm{\cdot}}{\IndexSubset}-\FunctionBigF\np{\IndexSubset} } } }
        \eqfinp
        \label{eq:calL_0_definition}
      \end{equation}
    \item[$(v)$]\label{pr:calL0-five}
      The function~${\cal L}_{0}^{\FunctionBigF}$ in~\eqref{eq:definition_calL0}
is the largest proper convex lsc function below the function 
      \begin{equation}
        \inf_{\IndexSubset\subset \IndexSet} \Bc{ \delta_{
            \SupportDualNorm{\TripleNormBall}{\IndexSubset} } \UppPlus \FunctionBigF\np{\IndexSubset} }
        \eqfinv
        \label{eq:support_mapping_largest_convex_balls}
      \end{equation}
      {      that is, below the function
        \( \primal \in \RR^d \mapsto \inf \bset{ \FunctionBigF\np{\IndexSubset} }%
        { \IndexSubset \subset \IndexSet, \, \primal \in
          \SupportDualNorm{\TripleNormBall}{\IndexSubset} } \),
        with the convention that \( \SupportDualNorm{\TripleNormBall}{\emptyset}=\{0\} \)
        and that \( \inf \emptyset = +\infty \).}
    \item[$(vi)$]\label{pr:calL0-six} 
      The function~${\cal L}_{0}^{\FunctionBigF}$ in~\eqref{eq:definition_calL0}
is the largest proper convex lsc function below the function 
      \begin{equation}
        \inf_{\IndexSubset \subset \IndexSet} \Bc{ \delta_{
            \SupportDualNorm{\TripleNormSphere}{\IndexSubset} } \UppPlus
          \FunctionBigF\np{\IndexSubset} }
        \eqfinv
        \label{eq:support_mapping_largest_convex_spheres}
      \end{equation}
      that is, below the function
      \( \primal \in \RR^d \mapsto \inf \bset{ \FunctionBigF\np{\IndexSubset} }%
      { \IndexSubset \subset \IndexSet, \, \primal \in
        \SupportDualNorm{\TripleNormSphere}{\IndexSubset} } \),
      with the convention that \( \SupportDualNorm{\TripleNormSphere}{\emptyset}=\{0\} \)
      and that \( \inf \emptyset = +\infty \).
    \item[$(vii)$]\label{pr:calL0-seven}  
The function~${\cal L}_{0}^{\FunctionBigF}$ in~\eqref{eq:definition_calL0}
also has the following variational expressions
      \begin{align}
        {\cal L}_{0}^{\FunctionBigF}\np{\primal} 
        &= 
          \min_{
          \substack{%
          \lambda \in \Delta_{\IndexSet} \\
        \primal \in \sum_{\IndexSubset\subset \IndexSet} \lambda_{\IndexSubset}
        \SupportDualNorm{\TripleNormBall}{\IndexSubset} } }
        \sum_{\IndexSubset \subset \IndexSet} \lambda_{\IndexSubset} \FunctionBigF\np{\IndexSubset}
        \eqsepv \forall  \primal \in \RR^d 
        \label{eq:biconjugate_with_balls}
        \\
        &= 
          \min_{
          \substack{%
          \lambda \in \Delta_{\IndexSet} \\
        \primal \in \sum_{\IndexSubset\subset \IndexSet} \lambda_{\IndexSubset}
        \SupportDualNorm{\TripleNormSphere}{\IndexSubset} } }
        \sum_{\IndexSubset \subset \IndexSet} \lambda_{\IndexSubset} \FunctionBigF\np{\IndexSubset}
        \eqsepv \forall  \primal \in \RR^d 
        \label{eq:biconjugate_with_spheres}
        \intertext{and, if $\FunctionBigF\np{\emptyset}=0$, }
        {\cal L}_{0}^{\FunctionBigF}\np{\primal} 
        &= 
          \min_{ \substack{%
          z \in \RR_{\IndexSet}^{\backslash\emptyset}
        \\
        \sum_{\emptyset \subsetneq\IndexSubset\subset \IndexSet} 
        \SupportDualNorm{\TripleNorm{z_{(\IndexSubset)}}}{\IndexSubset} \leq 1
        \\
        \sum_{\emptyset \subsetneq\IndexSubset\subset \IndexSet} z_{(\IndexSubset)} = \primal } }
        \sum_{\emptyset \subsetneq \IndexSubset\subset \IndexSet}
        \SupportDualNorm{\TripleNorm{z_{(\IndexSubset)}}}{\IndexSubset}
        \FunctionBigF\np{\IndexSubset}
        \eqsepv \forall \primal \in \RR^d
        \eqfinp
        \label{eq:support_mapping_convex_minimum}
      \end{align}
    \end{description}
  \end{subequations}
\end{proposition}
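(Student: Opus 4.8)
The plan is to derive the seven items, with almost no new computation, from two results already at our disposal: the \Capra-biconjugacy identity $\SFMbi{\np{\FunctionBigF\circ\SupportMapping}}{\CouplingCapra}=\FunctionBigF\circ\SupportMapping$ of Theorem~\ref{th:support_mapping_conjugate}, and the detailed description of the function ${\cal L}_{0}^{\FunctionBigF}=\LFMr{\bp{\SFM{\np{\FunctionBigF\circ\SupportMapping}}{\CouplingCapra}}}$ provided by Proposition~\ref{pr:Support_mapping_biconjugate}. Apart from what Theorem~\ref{th:support_mapping_conjugate} already consumes, the only extra consequence of orthant-strict monotonicity we use is the norm identity $\CoordinateNorm{\TripleNorm{\cdot}}{\IndexSubset}=\SupportDualNorm{\TripleNorm{\cdot}}{\IndexSubset}$ for every $\IndexSubset\subset\IndexSet$, which in fact holds whenever the source norm is merely orthant-monotonic. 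I would establish it first: by Definitions~\ref{de:coordinate_norm} and~\ref{de:top_dual_norm}, $\CoordinateNorm{\TripleNorm{\cdot}}{\IndexSubset}$ and $\SupportDualNorm{\TripleNorm{\cdot}}{\IndexSubset}$ are the dual norms, taken on the subspace $\FlatRR_\IndexSubset$, of $\CoordinateNormDual{\TripleNorm{\cdot}}{\IndexSubset}$ and of $\TopDualNorm{\TripleNorm{\cdot}}{\IndexSubset}$ respectively; the latter two coincide by Proposition~\ref{pr:dual_coordinate-k_norm_=_generalized_top-k_norm} (orthant-monotonicity of $\TripleNorm{\cdot}$), and dualizing on $\FlatRR_\IndexSubset$ gives the claim, hence also $\CoordinateNorm{\TripleNormBall}{\IndexSubset}=\SupportDualNorm{\TripleNormBall}{\IndexSubset}$ and $\CoordinateNorm{\TripleNormSphere}{\IndexSubset}=\SupportDualNorm{\TripleNormSphere}{\IndexSubset}$, with the same $\emptyset$-conventions on both sides.

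I would then dispatch $(i)$--$(iv)$. Item $(i)$ is exactly the ``proper convex lsc'' assertion of Proposition~\ref{pr:Support_mapping_biconjugate} for a finite-valued $\FunctionBigF$. For $(iii)$, I feed Equation~\eqref{eq:Biconjugate_normalization_mapping}, which states $\SFMbi{\np{\FunctionBigF\circ\SupportMapping}}{\CouplingCapra}\np{\primal}={\cal L}_{0}^{\FunctionBigF}\bp{\primal/\TripleNorm{\primal}}$ for $\primal\neq0$, into Theorem~\ref{th:support_mapping_conjugate}; this is~\eqref{eq:support_mapping_convex_normalization_mapping}. Item $(ii)$, i.e.~\eqref{eq:support_mapping_convex_sphere}, is its specialization to $\primal\in\TripleNormSphere$, where $\primal/\TripleNorm{\primal}=\primal$. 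Item $(iv)$, i.e.~\eqref{eq:calL_0_definition}, is the definition~\eqref{eq:definition_calL0} of ${\cal L}_{0}^{\FunctionBigF}$ combined with the expression~\eqref{eq:conjugate_l0norm_TopDualNorm} of $\SFM{\np{\FunctionBigF\circ\SupportMapping}}{\CouplingCapra}$ from Proposition~\ref{pr:support_mapping_conjugate}, which applies because an orthant-strictly monotonic norm is orthant-monotonic.

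For $(v)$ and $(vi)$, I apply Proposition~\ref{pr:Support_mapping_biconjugate} with the choices $\Gamma_\IndexSubset=\SupportDualNorm{\TripleNormBall}{\IndexSubset}$ and $\Gamma_\IndexSubset=\SupportDualNorm{\TripleNormSphere}{\IndexSubset}$ respectively; the admissibility requirement $\closedconvexhull{\np{\Gamma_\IndexSubset}}=\CoordinateNorm{\TripleNormBall}{\IndexSubset}$ holds because, by the norm identity, $\CoordinateNorm{\TripleNormBall}{\IndexSubset}=\SupportDualNorm{\TripleNormBall}{\IndexSubset}$ is already closed and convex, while in finite dimension the (closed) convex hull of the unit sphere of a norm is its unit ball. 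Then~\eqref{eq:Biconjugate_of_min_Gamma_ind} reads ${\cal L}_{0}^{\FunctionBigF}=\LFMbi{\bp{\inf_{\IndexSubset\subset\IndexSet}\bp{\delta_{\Gamma_\IndexSubset}\UppPlus\FunctionBigF\np{\IndexSubset}}}}$; since a Fenchel biconjugate is the largest closed convex minorant of its argument and ${\cal L}_{0}^{\FunctionBigF}$ is proper by $(i)$, it is the largest proper convex lsc function below $\inf_{\IndexSubset}\bp{\delta_{\Gamma_\IndexSubset}\UppPlus\FunctionBigF\np{\IndexSubset}}$, which by~\eqref{eq:Biconjugate_of_min_Gamma_ind_bis} is the function $\primal\mapsto\inf\bset{\FunctionBigF\np{\IndexSubset}}{\IndexSubset\subset\IndexSet,\,\primal\in\Gamma_\IndexSubset}$; this gives~\eqref{eq:support_mapping_largest_convex_balls} and~\eqref{eq:support_mapping_largest_convex_spheres}. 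Item $(vii)$ then just transcribes Equation~\eqref{eq:biconjugate_with_Gamma} of Proposition~\ref{pr:Support_mapping_biconjugate} under those two choices of $\Gamma_\IndexSubset$, producing~\eqref{eq:biconjugate_with_balls} and~\eqref{eq:biconjugate_with_spheres}, and Equation~\eqref{eq:Fenchel_of_CAPRA_conjugate_function_of_the_Support_variational_expression} with $\Gamma_\IndexSubset=\CoordinateNorm{\TripleNormSphere}{\IndexSubset}$, in each case rewriting $\CoordinateNorm{\TripleNorm{\cdot}}{\IndexSubset}$ as $\SupportDualNorm{\TripleNorm{\cdot}}{\IndexSubset}$; the last one,~\eqref{eq:support_mapping_convex_minimum}, additionally uses $\FunctionBigF\np{\emptyset}=0$.

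The argument is genuinely routine once Theorem~\ref{th:support_mapping_conjugate} and Proposition~\ref{pr:Support_mapping_biconjugate} are granted, so the main obstacle is only bookkeeping: keeping the ``local'' balls, spheres, support functions and convex hulls all inside the right subspace $\FlatRR_\IndexSubset$; checking that the $\emptyset$-conventions $\CoordinateNorm{\TripleNormBall}{\emptyset}=\SupportDualNorm{\TripleNormBall}{\emptyset}=\Gamma_\emptyset=\na{0}$ are mutually compatible; and invoking Proposition~\ref{pr:Support_mapping_biconjugate} in the appropriate regime, since its formulas~\eqref{eq:Biconjugate_of_min_Gamma_ind}--\eqref{eq:Biconjugate_of_min_Gamma_ind_bis} hold for every set function whereas properness and the variational formula~\eqref{eq:Fenchel_of_CAPRA_conjugate_function_of_the_Support_variational_expression} require $\FunctionBigF$ finite-valued (and the latter also $\FunctionBigF\np{\emptyset}=0$).
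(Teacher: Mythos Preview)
Your proposal is correct and follows essentially the same approach as the paper: first invoke Proposition~\ref{pr:dual_coordinate-k_norm_=_generalized_top-k_norm} to identify the coordinate and support-dual norm families under orthant-monotonicity, then read off each item from Theorem~\ref{th:support_mapping_conjugate} and the various formulas of Proposition~\ref{pr:Support_mapping_biconjugate}. The only cosmetic differences are that the paper proves $(ii)$ before $(iii)$ (deriving $(iii)$ from $0$-homogeneity rather than as the general case), and that for $(i)$ the paper re-argues properness from ``closed convex $+$ finite on the sphere'' whereas you simply cite the properness clause already contained in Proposition~\ref{pr:Support_mapping_biconjugate}.
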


\begin{proof}
  Since the norm $\TripleNorm{\cdot}$ is orthant-strictly monotonic,
  it is orthant-monotonic, so that we have 
  \( \CoordinateNorm{\TripleNorm{\cdot}}{\IndexSubset} 
  =
  \SupportDualNorm{\TripleNorm{\cdot}}{\IndexSubset} \) 
  and
  \(      \CoordinateNormDual{\TripleNorm{\cdot}}{\IndexSubset}
  =
  \TopDualNorm{\TripleNorm{\cdot}}{\IndexSubset} \),
  for any \( \IndexSubset\subset \IndexSet \) 
  by Equation~\eqref{eq:dual_coordinate-k_norm_=_generalized_top-k_norm} 
in Proposition~\ref{pr:dual_coordinate-k_norm_=_generalized_top-k_norm}
  (with the proper conventions that they are all zero in the case \( \IndexSubset=\emptyset \)).
\smallskip

  \noindent $\bullet\, (i)$
  The function~${\cal L}_{0}^{\FunctionBigF}$ in~\eqref{eq:definition_calL0}
is a Fenchel conjugate, hence is a closed convex function
since the Fenchel conjugacy induces a one-to-one correspondence
  between the closed convex functions on~$\RR^d$ and themselves 
  \cite[Theorem~5]{Rockafellar:1974}.
  By~\eqref{eq:support_mapping_convex_sphere}, proven below, the
  function~${\cal L}_{0}^{\FunctionBigF}$ takes finite values on the unit sphere.
  Thanks to Footnote~\ref{ft:closed_convex_function}, we conclude that 
  the function~${\cal L}_{0}^{\FunctionBigF}$ is proper convex lsc.
\smallskip

  \noindent $\bullet\, (ii)$
  The assumptions make it possible to conclude that 
  \( \SFMbi{ \np{ \FunctionBigF \circ \SupportMapping } }{\CouplingCapra} = \FunctionBigF \circ \SupportMapping \), 
  thanks to Theorem~\ref{th:support_mapping_conjugate}.
  We deduce from~\eqref{eq:Biconjugate_normalization_mapping} 
that 
\[
\np{\FunctionBigF \circ \SupportMapping }\np{\primal}
=
\SFMbi{ \bp{\FunctionBigF\circ\SupportMapping} }{\CouplingCapra}\np{\primal}
= 
\LFMr{ \bp{ \SFM{ \bp{\FunctionBigF\circ\SupportMapping}}{\CouplingCapra} } }
          \np{ \frac{\primal}{\TripleNorm{\primal}} }
          \eqsepv \forall \primal \in \RR^d\backslash\{0\}
\eqfinp
\]
Thus, the function~$\FunctionBigF \circ \SupportMapping$ coincides, 
on the unit sphere~$\TripleNormSphere$, 
  with the closed convex function ${\cal L}_{0}^{\FunctionBigF} : \RR^d \to \barRR $
  given by~\eqref{eq:CAPRA_Fenchel-Moreau_biconjugate}, 
  namely 
  \( {\cal L}_{0}^{\FunctionBigF} = \LFMr{ \bp{ \SFM{\np{ \FunctionBigF \circ \SupportMapping
        }}{\CouplingCapra} } } \).
  Thus, we have proved~\eqref{eq:support_mapping_convex_sphere}.
\smallskip

  \noindent $\bullet\, (iii)$
  The equality~\eqref{eq:support_mapping_convex_normalization_mapping}
  is an easy consequence of the
  property~\eqref{eq:support_mapping_is_0-homogeneous}, 
  implying that the function~$\FunctionBigF \circ \SupportMapping$ is invariant along
  any open ray of~$\RR^d$.
\smallskip

  \noindent $\bullet\, (iv)$ 
  As \( {\cal L}_{0}^{\FunctionBigF} = \LFMr{ \bp{ \SFM{\np{ \FunctionBigF \circ \SupportMapping }}{\CouplingCapra} } } \)
  by definition~\eqref{eq:definition_calL0},
  and as \( \LFMr{ \bp{ \SFM{\np{ \FunctionBigF \circ \SupportMapping }}{\CouplingCapra} } }
  = \LFMr{ \Bp{ \sup_{\IndexSubset\subset \IndexSet} \Bc{ \TopDualNorm{\TripleNorm{\cdot}}{\IndexSubset}-\FunctionBigF\np{\IndexSubset}
      } } } \) 
  by~\eqref{eq:conjugate_l0norm_TopDualNorm},
  we get \eqref{eq:calL_0_definition}.
  \smallskip

  \noindent $\bullet\, (v)$
  We use~\eqref{eq:Biconjugate_of_min_Gamma_ind} with 
  \( \Gamma_{\IndexSubset} =
  \SupportDualNorm{\TripleNormBall}{\IndexSubset} \)
  to obtain~\eqref{eq:support_mapping_largest_convex_balls}. Indeed, since
  $\SupportDualNorm{\TripleNormBall}{\IndexSubset} =
  \CoordinateNorm{\TripleNormBall}{\IndexSubset}$ 
  (because \( \CoordinateNorm{\TripleNorm{\cdot}}{\IndexSubset} 
  =
  \SupportDualNorm{\TripleNorm{\cdot}}{\IndexSubset} \)),
  we have that
  $\closedconvexhull{\np{\Gamma_\IndexSubset}}=\SupportDualNorm{\TripleNormBall}{\IndexSubset}=
  \CoordinateNorm{\TripleNormBall}{\IndexSubset}$ and thus
  $\SupportDualNorm{\TripleNormBall}{\IndexSubset}$ can be used as $\Gamma_{\IndexSubset}$.
\smallskip

  \noindent $\bullet\, (vi)$ 
  In the same way, we use~\eqref{eq:Biconjugate_of_min_Gamma_ind} with 
  \( \Gamma_{\IndexSubset} = \SupportDualNorm{\TripleNormSphere}{\IndexSubset} \)
  to obtain~\eqref{eq:support_mapping_largest_convex_spheres}.

  \noindent $\bullet\, (vii)$ We use~\eqref{eq:biconjugate_with_Gamma} with
  \( \Gamma_{\IndexSubset} = \SupportDualNorm{\TripleNormBall}{\IndexSubset} \)
  to obtain~\eqref{eq:biconjugate_with_balls}. In the same way, we also
  use~\eqref{eq:biconjugate_with_Gamma} with
  \( \Gamma_{\IndexSubset} = \SupportDualNorm{\TripleNormSphere}{\IndexSubset}
  \) to obtain~\eqref{eq:biconjugate_with_spheres}.  We
  use~\eqref{eq:Fenchel_of_CAPRA_conjugate_function_of_the_Support_variational_expression}
with  \( \CoordinateNorm{\TripleNorm{\cdot}}{\IndexSubset} 
  =
  \SupportDualNorm{\TripleNorm{\cdot}}{\IndexSubset} \)
  to obtain~\eqref{eq:support_mapping_convex_minimum}.  
\smallskip
  
  This ends the proof.
\end{proof}

\subsection{Variational formulation for nondecreasing \FSM}
\label{Variational_formulation_for_the_pseudo_norm}

As a straightforward application of Proposition~\ref{pr:calL0},
we obtain our second main result, namely a variational formulation for suitable nondecreasing \FSM.

\begin{theorem}
 Let $\TripleNorm{\cdot}$ be the source norm with associated family 
 \( \sequence{\SupportDualNorm{\TripleNorm{\cdot}}{\IndexSubset}}{\IndexSubset \subset \IndexSet} \) 
of generalized local-$\IndexSubset$-support dual~norms,
  as in Definition~\ref{de:top_dual_norm}. 

Suppose that both the norm $\TripleNorm{\cdot}$ and the dual norm $\TripleNormDual{\cdot}$
  are orthant-strictly monotonic. Then,
    for any nondecreasing finite-valued set function \( \FunctionBigF :
    2^\IndexSet \to \RR \) such that $\FunctionBigF\np{\emptyset}=0$, we have the equality
  \begin{equation}
    \np{ \FunctionBigF \circ \SupportMapping }\np{\primal} 
    = 
    \frac{ 1 }{ \TripleNorm{\primal} } 
    \min_{ \substack{%
        z \in \RR_{\IndexSet}^{\backslash\emptyset}
        \\
        \sum_{\emptyset \subsetneq\IndexSubset\subset \IndexSet} 
        \SupportDualNorm{\TripleNorm{z_{(\IndexSubset)}}}{\IndexSubset} \leq \TripleNorm{\primal} 
        \\
        \sum_{\emptyset \subsetneq\IndexSubset\subset \IndexSet} z_{(\IndexSubset)} = \primal } }
    \sum_{\emptyset \subsetneq\IndexSubset\subset \IndexSet}
    \SupportDualNorm{\TripleNorm{z_{(\IndexSubset)}}}{\IndexSubset}
    \FunctionBigF\np{\IndexSubset}
    \eqsepv \forall \primal \in \RR^d\backslash\{0\}
    \eqfinp
    \label{eq:Variational_formulation_for_the_pseudo_norm}  
  \end{equation}
  When \( \SupportMapping\np{\primal} = L  \neq \emptyset \), 
  the minimum in~\eqref{eq:Variational_formulation_for_the_pseudo_norm} 
  is achieved at \( z \in \RR_{\IndexSet}^{\backslash\emptyset}\) such that 
  \( z_{(\IndexSubset)}=0 \) for \( \IndexSubset \neq L \)
  and \( z_{(L)}=\primal \). 
  \label{th:Variational_formulation_for_the_pseudo_norm}
\end{theorem}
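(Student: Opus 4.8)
The plan is to read off the statement from two results already established: the \Capra-convexity of~$\FunctionBigF\circ\SupportMapping$ (Theorem~\ref{th:support_mapping_conjugate}) and the variational expression~\eqref{eq:CAPRA_biconjugate_function_of_the_Support_variational_expression} of the \Capra-biconjugate in Proposition~\ref{pr:Support_mapping_biconjugate}. Since both the source norm and its dual norm are orthant-strictly monotonic, the hypotheses of Theorem~\ref{th:support_mapping_conjugate} hold, so $\SFMbi{\np{\FunctionBigF\circ\SupportMapping}}{\CouplingCapra}=\FunctionBigF\circ\SupportMapping$. On the other hand, $\FunctionBigF$ is finite-valued with $\FunctionBigF\np{\emptyset}=0$, so Equation~\eqref{eq:CAPRA_biconjugate_function_of_the_Support_variational_expression} applies and writes $\SFMbi{\np{\FunctionBigF\circ\SupportMapping}}{\CouplingCapra}\np{\primal}$, for $\primal\neq 0$, as $\frac1{\TripleNorm{\primal}}$ times a minimum, over families $z\in\RR_{\IndexSet}^{\backslash\emptyset}$ with $\sum_{\emptyset\subsetneq\IndexSubset\subset\IndexSet}z_{(\IndexSubset)}=\primal$ and $\sum_{\emptyset\subsetneq\IndexSubset\subset\IndexSet}\CoordinateNorm{\TripleNorm{z_{(\IndexSubset)}}}{\IndexSubset}\le\TripleNorm{\primal}$, of $\sum_{\emptyset\subsetneq\IndexSubset\subset\IndexSet}\CoordinateNorm{\TripleNorm{z_{(\IndexSubset)}}}{\IndexSubset}\FunctionBigF\np{\IndexSubset}$. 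The only discrepancy with~\eqref{eq:Variational_formulation_for_the_pseudo_norm} is that the latter is phrased with the generalized local-$\IndexSubset$-support dual~norms~$\SupportDualNorm{\TripleNorm{\cdot}}{\IndexSubset}$ instead of the local-coordinate-$\IndexSubset$ norms~$\CoordinateNorm{\TripleNorm{\cdot}}{\IndexSubset}$; but orthant-strict monotonicity implies orthant-monotonicity, hence $\CoordinateNorm{\TripleNorm{\cdot}}{\IndexSubset}=\SupportDualNorm{\TripleNorm{\cdot}}{\IndexSubset}$ for every $\IndexSubset\subset\IndexSet$ by Proposition~\ref{pr:dual_coordinate-k_norm_=_generalized_top-k_norm}, and substituting this identity into~\eqref{eq:CAPRA_biconjugate_function_of_the_Support_variational_expression} gives exactly~\eqref{eq:Variational_formulation_for_the_pseudo_norm}. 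Equivalently, one may combine items~$(iii)$ and~$(vii)$ of Proposition~\ref{pr:calL0}: plug~\eqref{eq:support_mapping_convex_normalization_mapping} into~\eqref{eq:support_mapping_convex_minimum} and rescale by $z\mapsto\TripleNorm{\primal}\,z$, using the $1$-homogeneity of each~$\SupportDualNorm{\TripleNorm{\cdot}}{\IndexSubset}$.

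It remains to identify a minimizer in~\eqref{eq:Variational_formulation_for_the_pseudo_norm} when $L=\SupportMapping\np{\primal}\neq\emptyset$. I would take the candidate family $z\in\RR_{\IndexSet}^{\backslash\emptyset}$ defined by $z_{(L)}=\primal$ and $z_{(\IndexSubset)}=0$ for $\IndexSubset\neq L$, and check feasibility and optimality. Feasibility: $\SupportMapping\np{\primal}=L$ gives $\primal\in\FlatRR_{L}$, so $z_{(L)}$ lies in the correct subspace; the equality constraint $\sum_{\emptyset\subsetneq\IndexSubset\subset\IndexSet}z_{(\IndexSubset)}=\primal$ is immediate; and the inequality constraint reduces to $\SupportDualNorm{\TripleNorm{\primal}}{L}\le\TripleNorm{\primal}$, which in fact holds with equality because $\SupportDualNorm{\TripleNorm{\primal}}{L}=\CoordinateNorm{\TripleNorm{\primal}}{L}=\TripleNorm{\primal}$ for $\primal$ with $\Support{\primal}=L$ (again orthant-monotonicity and~\eqref{eq:coordinate_norm_graded_b}). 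Optimality: at this $z$ the objective in~\eqref{eq:Variational_formulation_for_the_pseudo_norm} equals $\frac1{\TripleNorm{\primal}}\SupportDualNorm{\TripleNorm{\primal}}{L}\FunctionBigF\np{L}=\FunctionBigF\np{L}=\np{\FunctionBigF\circ\SupportMapping}\np{\primal}$, which is precisely the value of the left-hand side of~\eqref{eq:Variational_formulation_for_the_pseudo_norm}; since the two sides of~\eqref{eq:Variational_formulation_for_the_pseudo_norm} are equal, this common value is the minimum, and the candidate $z$ attains it.

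I do not anticipate any real difficulty: the statement is essentially a repackaging of Theorem~\ref{th:support_mapping_conjugate} and Proposition~\ref{pr:Support_mapping_biconjugate} (or of Proposition~\ref{pr:calL0}). The only points requiring a little care are the $\emptyset$-conventions across the families of local norms and the identity $\SupportDualNorm{\TripleNorm{\primal}}{L}=\TripleNorm{\primal}$ on $\FlatRR_{L}$, which is the one place where orthant-monotonicity enters the verification of the explicit minimizer.
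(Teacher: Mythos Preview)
Your proposal is correct and matches the paper's approach: the paper derives~\eqref{eq:Variational_formulation_for_the_pseudo_norm} precisely from~\eqref{eq:support_mapping_convex_normalization_mapping} and~\eqref{eq:support_mapping_convex_minimum} (your ``equivalently'' route via Proposition~\ref{pr:calL0}), and verifies the explicit minimizer exactly as you do, using $\SupportDualNorm{\TripleNorm{\primal}}{L}=\CoordinateNorm{\TripleNorm{\primal}}{L}=\TripleNorm{\primal}$ from orthant-monotonicity and~\eqref{eq:coordinate_norm_graded_b}. Your first route through Theorem~\ref{th:support_mapping_conjugate} and~\eqref{eq:CAPRA_biconjugate_function_of_the_Support_variational_expression} is an equally valid repackaging of the same ingredients.
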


\begin{proof}
  Equation~\eqref{eq:Variational_formulation_for_the_pseudo_norm}  
  derives from~\eqref{eq:support_mapping_convex_normalization_mapping}
  and~\eqref{eq:support_mapping_convex_minimum}.

  When \( \SupportMapping\np{\primal} = L \neq \emptyset \), we have 
  \( \TripleNorm{\primal} =  \CoordinateNorm{\TripleNorm{\primal}}{L} \)
  by~\eqref{eq:coordinate_norm_graded_b}.
  Moreover,  \( \CoordinateNorm{\TripleNorm{\cdot}}{\IndexSubset} 
  =
  \SupportDualNorm{\TripleNorm{\cdot}}{\IndexSubset} \) 
  by Equation~\eqref{eq:dual_coordinate-k_norm_=_generalized_top-k_norm} 
in Proposition~\ref{pr:dual_coordinate-k_norm_=_generalized_top-k_norm},
  since the norm $\TripleNorm{\cdot}$ is orthant-strictly monotonic,
  hence is orthant-monotonic.
  As a consequence, we have that 
  \( \TripleNorm{\primal} =
  \SupportDualNorm{\TripleNorm{\primal}}{L} \).
  Therefore, \( z \in \RR_{\IndexSet}^{\backslash\emptyset} \) such that 
  \( z_{(L)}=x \) and \( z_{(K)}=0 \) for \( K \not=L \)
  is admissible for the minimization
  problem~\eqref{eq:Variational_formulation_for_the_pseudo_norm}.
  We deduce that 
  \( \FunctionBigF\np{L} =    \np{ \FunctionBigF \circ \SupportMapping }\np{\primal} 
  \leq 
  \frac{ 1 }{ \TripleNorm{\primal} } \FunctionBigF\np{L} 
  \SupportDualNorm{\TripleNorm{\primal}}{L}
  =  \FunctionBigF\np{L}\).
  \medskip

  This ends the proof. 
\end{proof}

As an illustration, Theorem~\ref{th:Variational_formulation_for_the_pseudo_norm} applies 
when the norm~$\TripleNorm{\cdot}$ is any of the 
$\ell_p$-norms~$\Norm{\cdot}_{p}$ on the space~\( \RR^d \), 
for $p\in ]1,\infty[$, giving: 
\begin{equation}
  \np{ \FunctionBigF \circ \SupportMapping }\np{\primal} 
  = 
  \frac{ 1 }{ \Norm{\primal}_{p} } 
  \min_{ \substack{%
      z \in \RR_{\IndexSet}^{\backslash\emptyset}
      \\
      \sum_{\emptyset \subsetneq\IndexSubset\subset \IndexSet} 
\Norm{z_{(\IndexSubset)}}_{p,\star,(\IndexSubset)}^{\star \mathrm{sn}} \leq \Norm{\primal}_{p} 
      \\
      \sum_{\emptyset \subsetneq\IndexSubset\subset \IndexSet} z_{(\IndexSubset)} = \primal } }
  \sum_{\emptyset \subsetneq\IndexSubset\subset \IndexSet}
  \Norm{z_{(\IndexSubset)}}_{p,\star,(\IndexSubset)}^{\star \mathrm{sn}} 
  \FunctionBigF\np{\IndexSubset}
  \eqsepv \forall \primal \in \RR^d\backslash\{0\}
  \eqfinp
\end{equation}
Indeed, when $p\in ]1,\infty[$, 
the $\ell_p$-norm $\TripleNorm{\cdot}=\Norm{\cdot}_{p}$
is orthant-strictly monotonic, and so is its dual norm
$\TripleNormDual{\cdot}=\Norm{\cdot}_q$ where \( 1/p + 1/q = 1 \)
as easily seen. 
      %
When $p=\infty$, the $\ell_{\infty}$-norm $\TripleNorm{\cdot}=\Norm{\cdot}_{\infty}$
is not orthant-strictly monotonic.
When $p=1$, the $\ell_1$-norm $\TripleNorm{\cdot}=\Norm{\cdot}_{1}$
is orthant-strictly monotonic, 
but the dual norm $\TripleNorm{\cdot}=\Norm{\cdot}_{\infty}$ is not.


\subsubsubsection{Applications to sparse optimization}

Supposing that the assumptions of Theorem~\ref{th:Variational_formulation_for_the_pseudo_norm}
are satisfied, we get the two following 
reformulations for exact sparse optimization problems. 
  Let \( \FunctionBigF : 2^\IndexSet \to \RR_+ \) be a nondecreasing finite-valued set function
  such that \( \FunctionBigF\np{\emptyset}=0 \). 





  Let $\Convex \subset \RR^d$ be such that \( 0 \not\in \Convex \)
(to avoid a division by zero, and also because the minimum would be achieved at zero).
  Then, we have that 
    \begin{align}
      \min_{ \primal \in \Convex } \FunctionBigF\bp{ \SupportMapping\np{\primal} }
      &= 
        \min_{ \primal \in \Convex}
        \frac{ 1 }{ \TripleNorm{\primal} } 
        \underbrace{%
        \min_{ \substack{%
        z \in \RR_{\IndexSet}^{\backslash\emptyset}
      \\
      \sum_{\emptyset \subsetneq\IndexSubset\subset \IndexSet} 
      \SupportDualNorm{\TripleNorm{z_{(\IndexSubset)}}}{\IndexSubset} \leq \TripleNorm{\primal} 
      \\
      \sum_{\emptyset \subsetneq\IndexSubset\subset \IndexSet} z_{(\IndexSubset)} = \primal } }
      \sum_{\emptyset \subsetneq\IndexSubset\subset \IndexSet}
      \SupportDualNorm{\TripleNorm{z_{(\IndexSubset)}}}{\IndexSubset}
      \FunctionBigF\np{\IndexSubset}
      }_{\textrm{convex optimization problem}}
      \eqfinp
    \end{align}




For any $\alpha \in \RR$, we have that 
  \begin{subequations}
    \begin{align}
      \min_{ \np{ \FunctionBigF \circ \SupportMapping }\np{\primal} \leq \alpha } \fonctionprimal\np{\primal} 
      &= 
        \min_{ \substack{%
        \primal \in \RR^d, 
        z \in  \RR_{\IndexSet}^{\backslash\emptyset}
      \\
      \sum_{\emptyset \subsetneq\IndexSubset\subset \IndexSet}
      \SupportDualNorm{\TripleNorm{z_{(\IndexSubset)}}}{\IndexSubset} \leq \TripleNorm{\primal}
      \\
      \sum_{\emptyset \subsetneq\IndexSubset\subset \IndexSet} z_{(\IndexSubset)} = \primal  
      \\
      \sum_{\emptyset \subsetneq\IndexSubset\subset \IndexSet} \FunctionBigF\np{\IndexSubset}
      \SupportDualNorm{\TripleNorm{z_{(\IndexSubset)}}}{\IndexSubset} \leq 
      \alpha \TripleNorm{\primal}
      } }
      \fonctionprimal\np{\primal} 
      \eqfinv
      \\
      &= 
        \min_{
        \substack{
        z \in  \RR_{\IndexSet}^{\backslash\emptyset} 
      \\
      \sum_{\emptyset \subsetneq\IndexSubset\subset \IndexSet}
      \SupportDualNorm{\TripleNorm{z_{(\IndexSubset)}}}{\IndexSubset} \leq
      \TripleNorm{\sum_{\emptyset \subsetneq\IndexSubset\subset \IndexSet} z_{(\IndexSubset)}}
      \\
      \sum_{\emptyset \subsetneq\IndexSubset\subset \IndexSet} \FunctionBigF\np{\IndexSubset}
      \SupportDualNorm{\TripleNorm{z_{(\IndexSubset)}}}{\IndexSubset} \leq 
      \alpha
      \TripleNorm{\sum_{\emptyset \subsetneq\IndexSubset\subset \IndexSet} z_{(\IndexSubset)}}
      }
      }
      \fonctionprimal\Bp{\sum_{\emptyset \subsetneq\IndexSubset\subset \IndexSet} z_{(\IndexSubset)} }
      \eqfinp
    \end{align}
  \end{subequations}

\subsection{Upper and lower bounds for nondecreasing \FSM\ as norm ratios}
\label{Upper_and_lower_bounds_for_the_support_mapping_as_norm_ratios}

We now show that the variational formulation obtained
in~\S\ref{Variational_formulation_for_the_pseudo_norm}
yields a family of lower and upper bounds for suitable nondecreasing
\FSM\, as a ratio between two norms --- the denominator norm being any
orthant-strictly monotonic norm with orthant-strictly monotonic dual norm.

\begin{proposition}
  Let $\TripleNorm{\cdot}$ be the source norm with associated families 
 \( \sequence{\TopDualNorm{\TripleNorm{\cdot}}{\IndexSubset}}{\IndexSubset \subset \IndexSet} \)
  of generalized local-top-$\IndexSubset$ dual~norms
and
 \( \sequence{\SupportDualNorm{\TripleNorm{\cdot}}{\IndexSubset}}{\IndexSubset \subset \IndexSet} \) 
of generalized local-$\IndexSubset$-support dual~norms,
  as in Definition~\ref{de:top_dual_norm}. 

Suppose that both the norm $\TripleNorm{\cdot}$ and the dual norm $\TripleNormDual{\cdot}$
  are orthant-strictly monotonic.
  Let \( \FunctionBigF : 2^\IndexSet \to \RR_+ \) be a nondecreasing nonnegative
  set function,
  such that \( \FunctionBigF\np{\IndexSubset} > \FunctionBigF\np{\emptyset}=0 \) for all
  $\emptyset \subsetneq \IndexSubset\subset \IndexSet$.
  Then, we have the inequalities
  \begin{equation}
    \frac{ \SupportDualNormF{\TripleNorm{\primal}}{\FunctionBigF} }{ \TripleNorm{\primal} } 
    \leq 
    \FunctionBigF\bp{ \SupportMapping \np{\primal} }
    \leq 
    \min_{\IndexSubset \subset \IndexSet}
    \frac{ \FunctionBigF\np{\IndexSubset} \SupportDualNorm{\TripleNorm{\primal}}{\IndexSubset} }
    { \TripleNorm{\primal} } 
    \eqsepv \forall \primal \in \RR^d\backslash\{0\}
    \eqfinv
    \label{eq:Variational_formulation_for_the_pseudo_norm_inequality_varphi}  
  \end{equation}
  where the norm \( \SupportDualNormF{\TripleNorm{\cdot}}{\FunctionBigF} \)
  is characterized
  \begin{subequations}
    \label{four-eq:TopDual_norm_varphi}
    \begin{itemize}
    \item 
      either by its dual norm \(
      \TopDualNormF{\TripleNorm{\cdot}}{\FunctionBigF}
      = \bp{ \SupportDualNormF{\TripleNorm{\cdot}}{\FunctionBigF} }^{\star} \) 
      which has unit ball
      \( \bigcap_{ \IndexSubset \subset \IndexSet } \FunctionBigF\np{\IndexSubset} \TopDualNorm{\TripleNormBall}{\IndexSubset} \),
      that is,
      \begin{align}
        \SupportDualNormF{\TripleNorm{\cdot}}{\FunctionBigF}
        &= 
          \bp{ \TopDualNormF{\TripleNorm{\cdot}}{\FunctionBigF} }^{\star} =
          \sigma_{ \TopDualNormF{\TripleNormBall}{\FunctionBigF} }
          \mtext{ with } \, 
          \TopDualNormF{\TripleNormBall}{\FunctionBigF}=
          \bigcap_{ \IndexSubset \subset \IndexSet } \FunctionBigF\np{\IndexSubset}
          \TopDualNorm{\TripleNormBall}{\IndexSubset}
          \eqfinv
          \label{eq:TopDual_norm_varphi}
          \intertext{or, equivalently, by the formula} 
        \bp{ \SupportDualNormF{\TripleNorm{\cdot}}{\FunctionBigF} }^{\star}\np{\dual} 
        &=
          \TopDualNormF{\TripleNorm{\dual}}{\FunctionBigF} 
          =
          \sup_{ \IndexSubset \subset \IndexSet }
          \frac{\TopDualNorm{\TripleNorm{\dual}}{\IndexSubset}}%
          { \FunctionBigF\np{\IndexSubset} }
          \eqsepv \forall \dual \in \RR^d 
          \eqfinv
          \label{eq:TopDual_norm_varphi_as_a_supremum}
      \end{align}
    \item 
      or by the inf-convolution
      \begin{align}
        \SupportDualNormF{\TripleNorm{\cdot}}{\FunctionBigF}
        &=
          \bigbox_{ \IndexSubset \subset \IndexSet }
          \Bp{ \FunctionBigF\np{\IndexSubset} 
          \SupportDualNorm{\TripleNorm{\cdot}}{\IndexSubset} }
          \eqfinv
          \\
        \text{that is,}\quad
        \SupportDualNormF{\TripleNorm{\primal}}{\FunctionBigF} 
        &=
          \inf_{ \substack{%
          z \in \RR_{\IndexSet}^{\backslash\emptyset}
        \\
        \sum_{ \IndexSubset=1 }^{ d } z^{(\IndexSubset)} = \primal  } }
        \sum_{\emptyset \subsetneq \IndexSubset\subset \IndexSet } \FunctionBigF\np{\IndexSubset}
        \SupportDualNorm{\TripleNorm{z^{(\IndexSubset)}}}{\IndexSubset} 
        \eqsepv \forall \primal \in \RR^d 
        \eqfinp
        \label{eq:TopDual_norm_flat_inf-convolution}
      \end{align}
    \end{itemize}
  \end{subequations}
\end{proposition}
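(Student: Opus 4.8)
Both inequalities in~\eqref{eq:Variational_formulation_for_the_pseudo_norm_inequality_varphi} will be read off the variational identity~\eqref{eq:Variational_formulation_for_the_pseudo_norm} of Theorem~\ref{th:Variational_formulation_for_the_pseudo_norm}, once the norm $\SupportDualNormF{\TripleNorm{\cdot}}{\FunctionBigF}$ has been identified; the argument parallels that of \cite[Proposition~\ref{CAPRA-pr:coordinate_norm_varphi}]{Chancelier-DeLara:2020_CAPRA_OPTIMIZATION}, which is the special case $\FunctionBigF=\cardinal{\cdot}$. Since $\TripleNorm{\cdot}$ is orthant-strictly monotonic, hence orthant-monotonic, Proposition~\ref{pr:dual_coordinate-k_norm_=_generalized_top-k_norm} gives $\CoordinateNorm{\TripleNorm{\cdot}}{\IndexSubset}=\SupportDualNorm{\TripleNorm{\cdot}}{\IndexSubset}$ and $\CoordinateNormDual{\TripleNorm{\cdot}}{\IndexSubset}=\TopDualNorm{\TripleNorm{\cdot}}{\IndexSubset}$ for every $\IndexSubset\subset\IndexSet$, and I record two elementary facts for $\primal\in\FlatRR_\IndexSubset$: \emph{(a)} $\SupportDualNorm{\TripleNorm{\primal}}{\IndexSubset}\le\TripleNorm{\primal}$, because the $\IndexSubset$-support unit ball contains $\TripleNormBall\cap\FlatRR_\IndexSubset$; \emph{(b)} $\SupportDualNorm{\TripleNorm{\primal}}{\IndexSubset}=\TripleNorm{\primal}$ whenever in addition $\SupportMapping\np{\primal}=\IndexSubset$, by~\eqref{eq:coordinate_norm_graded_b}.

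\textbf{Step 1: the norm $\SupportDualNormF{\TripleNorm{\cdot}}{\FunctionBigF}$ and~\eqref{four-eq:TopDual_norm_varphi}.} I take $\SupportDualNormF{\TripleNorm{\cdot}}{\FunctionBigF}$ to be the inf-convolution $\bigbox_{\IndexSubset\subset\IndexSet}\bp{\FunctionBigF\np{\IndexSubset}\SupportDualNorm{\TripleNorm{\cdot}}{\IndexSubset}}$, each summand being extended by $+\infty$ outside $\FlatRR_\IndexSubset$ (the summand $\IndexSubset=\emptyset$ merely forces $z_{(\emptyset)}=0$ and drops out). First I would compute, for $\dual\in\RR^d$, the Fenchel conjugate of $\FunctionBigF\np{\IndexSubset}\SupportDualNorm{\TripleNorm{\cdot}}{\IndexSubset}$: using $\proscal{\primal}{\dual}=\proscal{\primal}{\dual_\IndexSubset}$ for $\primal\in\FlatRR_\IndexSubset$ from~\eqref{eq:orthogonal_projection_self-dual}, together with the fact (Definition~\ref{de:top_dual_norm}) that $\TopDualNorm{\TripleNorm{\cdot}}{\IndexSubset}$ is the dual norm on $\FlatRR_\IndexSubset$ of $\SupportDualNorm{\TripleNorm{\cdot}}{\IndexSubset}$, this conjugate equals $\delta_{\FunctionBigF\np{\IndexSubset}\TopDualNorm{\TripleNormBall}{\IndexSubset}}$, where $\TopDualNorm{\TripleNormBall}{\IndexSubset}=\bset{\dual\in\RR^d}{\sup_{\IndexSubsetLess\subset\IndexSubset}\TripleNormDual{\dual_{\IndexSubsetLess}}\le1}$ is understood on all of $\RR^d$ via~\eqref{eq:top_dual_norm}. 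Since the Fenchel conjugate of an inf-convolution is the sum of conjugates, $\bp{\SupportDualNormF{\TripleNorm{\cdot}}{\FunctionBigF}}^{\star}=\sum_{\IndexSubset\subset\IndexSet}\delta_{\FunctionBigF\np{\IndexSubset}\TopDualNorm{\TripleNormBall}{\IndexSubset}}=\delta_{C}$ with $C=\bigcap_{\IndexSubset\subset\IndexSet}\FunctionBigF\np{\IndexSubset}\TopDualNorm{\TripleNormBall}{\IndexSubset}$. The set $C$ is closed, convex and symmetric, bounded (its factor for $\IndexSubset=\IndexSet$ lies in $\FunctionBigF\np{\IndexSet}\,\bset{\dual\in\RR^d}{\TripleNormDual{\dual}\le1}$) and a neighbourhood of $0$ (each factor with $\IndexSubset\neq\emptyset$ contains $\FunctionBigF\np{\IndexSubset}\,\bset{\dual\in\RR^d}{\TripleNormDual{\dual}\le1}$, by orthant-monotonicity of $\TripleNormDual{\cdot}$, and $\FunctionBigF\np{\IndexSubset}>0$), so $\sigma_C$ is a norm whose dual norm is the gauge of $C$, i.e.\ the supremum in~\eqref{eq:TopDual_norm_varphi_as_a_supremum}. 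Finally, the inf-convolution $\SupportDualNormF{\TripleNorm{\cdot}}{\FunctionBigF}$ is itself lower semicontinuous with attained infimum --- a minimizing family is bounded because the nonnegative objective controls every $\SupportDualNorm{\TripleNorm{z_{(\IndexSubset)}}}{\IndexSubset}$ and $\FunctionBigF\np{\IndexSubset}>0$ --- so, biconjugating, $\SupportDualNormF{\TripleNorm{\cdot}}{\FunctionBigF}=\bp{\delta_C}^{\star}=\sigma_C$; this yields all the assertions of~\eqref{four-eq:TopDual_norm_varphi}.

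\textbf{Step 2: the two inequalities.} For the upper bound, fix $\primal\in\RR^d\backslash\{0\}$, put $L=\SupportMapping\np{\primal}$, and take $\IndexSubset\subset\IndexSet$. If $L\subset\IndexSubset$, then $\primal\in\FlatRR_L\subset\FlatRR_\IndexSubset$ and $\SupportDualNorm{\TripleNorm{\primal}}{\IndexSubset}\le\TripleNorm{\primal}$ by~\emph{(a)}, so the family $z$ whose only nonzero block is $z_{(\IndexSubset)}=\primal$ is admissible in the minimization of~\eqref{eq:Variational_formulation_for_the_pseudo_norm}, whence $\np{\FunctionBigF\circ\SupportMapping}\np{\primal}\le\FunctionBigF\np{\IndexSubset}\SupportDualNorm{\TripleNorm{\primal}}{\IndexSubset}\big/\TripleNorm{\primal}$; if $L\not\subset\IndexSubset$, then $\primal\notin\FlatRR_\IndexSubset$, so $\SupportDualNorm{\TripleNorm{\primal}}{\IndexSubset}=+\infty$ and the inequality is vacuous. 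Taking the minimum over $\IndexSubset$ gives the right-hand inequality of~\eqref{eq:Variational_formulation_for_the_pseudo_norm_inequality_varphi}; with~\emph{(b)} at $\IndexSubset=L$ one even sees this minimum is attained and equals $\FunctionBigF\np{L}$. For the lower bound, observe that the minimization in~\eqref{eq:Variational_formulation_for_the_pseudo_norm} is over the families $z\in\RR_{\IndexSet}^{\backslash\emptyset}$ with $\sum_{\AllSubsetsExceptEmptyset}z_{(\IndexSubset)}=\primal$ \emph{and} the extra constraint $\sum_{\AllSubsetsExceptEmptyset}\SupportDualNorm{\TripleNorm{z_{(\IndexSubset)}}}{\IndexSubset}\le\TripleNorm{\primal}$, of exactly the objective that defines $\SupportDualNormF{\TripleNorm{\primal}}{\FunctionBigF}$ in~\eqref{eq:TopDual_norm_flat_inf-convolution}; dropping the extra constraint only enlarges the feasible set, so $\SupportDualNormF{\TripleNorm{\primal}}{\FunctionBigF}\le\TripleNorm{\primal}\,\np{\FunctionBigF\circ\SupportMapping}\np{\primal}$, which is the left-hand inequality.

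\textbf{Main obstacle.} The two inequalities are short; the real work is Step 1, i.e.\ making the polar/inf-convolution bookkeeping close up --- keeping track that $\TopDualNorm{\TripleNorm{\cdot}}{\IndexSubset}$ and $\SupportDualNorm{\TripleNorm{\cdot}}{\IndexSubset}$ are \emph{a priori} defined only on $\FlatRR_\IndexSubset$ and extending them to $\RR^d$ consistently, dealing with the degenerate index $\IndexSubset=\emptyset$, and verifying that the inf-convolution is already closed so that the second conjugation introduces no spurious lower semicontinuous hull.
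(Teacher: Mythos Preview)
Your proof is correct and follows essentially the same route as the paper's. Both arguments derive the two inequalities in~\eqref{eq:Variational_formulation_for_the_pseudo_norm_inequality_varphi} directly from the variational identity~\eqref{eq:Variational_formulation_for_the_pseudo_norm}: the upper bound by restricting the feasible set to single-block families $z_{(\IndexSubset)}=\primal$ (for $\IndexSubset\supset\SupportMapping\np{\primal}$), and the lower bound by enlarging the feasible set, dropping the constraint $\sum_{\IndexSubset}\SupportDualNorm{\TripleNorm{z_{(\IndexSubset)}}}{\IndexSubset}\le\TripleNorm{\primal}$ and recognizing the inf-convolution~\eqref{eq:TopDual_norm_flat_inf-convolution}. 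For the characterization~\eqref{four-eq:TopDual_norm_varphi}, the paper simply declares the equivalences ``straightforward'' and refers to~\cite[Proposition~\ref{CAPRA-pr:coordinate_norm_varphi}]{Chancelier-DeLara:2020_CAPRA_OPTIMIZATION}, whereas you spell out the Fenchel/polar computation explicitly; your added care about the $+\infty$ extension outside~$\FlatRR_\IndexSubset$ and the closedness of the inf-convolution is precisely the bookkeeping that the reference would supply. (A minor remark: your fact~\emph{(a)} is in fact an equality by~\eqref{eq:coordinate_norm_graded_b}, not merely an inequality, but the weaker form you state suffices for admissibility.)
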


\begin{proof}
  The equivalence between the four equivalent formulations in Equation~\eqref{four-eq:TopDual_norm_varphi}
  is straightforward
  (See~\cite[Proposition~\ref{CAPRA-pr:coordinate_norm_varphi}]{Chancelier-DeLara:2020_CAPRA_OPTIMIZATION}
  for a similar proof) as it is immediate to
  verify that $\SupportDualNormF{\TripleNorm{\cdot}}{\FunctionBigF}$ is a norm since
  \( \FunctionBigF\np{\IndexSubset} > \FunctionBigF\np{\emptyset}=0 \) for all
  $\emptyset \subsetneq \IndexSubset\subset \IndexSet$.

  Now, the two inequalities in Equation~\eqref{eq:Variational_formulation_for_the_pseudo_norm_inequality_varphi}
  follow from Equation~\eqref{eq:Variational_formulation_for_the_pseudo_norm},
as the assumptions of
Theorem~\ref{th:Variational_formulation_for_the_pseudo_norm} are satisfied.

To get the right hand side (upper bound) inequality
in Equation~\eqref{eq:Variational_formulation_for_the_pseudo_norm_inequality_varphi},
it suffices to consider the
Equality~\eqref{eq:Variational_formulation_for_the_pseudo_norm}
and to deduce an upper bound by reducing the set over which the minimization is
done to all the vectors
 \( z \in \RR_{\IndexSet}^{\backslash\emptyset} \) such that 
  \( z_{(\IndexSubset)}=0 \) for \( \IndexSubset \neq L \) and \(
  z_{(L)}=\primal \), for all \( L \subset \IndexSet \).

To get the left hand side (lower bound) inequality in
  Equation~\eqref{eq:Variational_formulation_for_the_pseudo_norm_inequality_varphi},
it suffices to consider the
Equality~\eqref{eq:Variational_formulation_for_the_pseudo_norm}
and to deduce a lower bound by extending the set over which the minimization is
done to all the vectors
\( z \in \RR_{\IndexSet}^{\backslash\emptyset} \) such that 
\( \sum_{\emptyset \subsetneq\IndexSubset\subset \IndexSet} z_{(\IndexSubset)} =
\primal \). Then, we recognize the inf-convolution formulation in Equation~\eqref{eq:TopDual_norm_flat_inf-convolution}.
\smallskip

  This ends the proof.  
\end{proof}

Equation~\eqref{eq:Variational_formulation_for_the_pseudo_norm_inequality_varphi}
can also we written as 
\begin{equation}
  \SupportDualNormF{\TripleNorm{\primal}}{\FunctionBigF}
  \leq 
  \FunctionBigF\bp{ \SupportMapping \np{\primal} } \TripleNorm{\primal}
  \leq 
  \min_{\IndexSubset \subset \IndexSet}
  \Bp{ \FunctionBigF\np{\IndexSubset}
    \SupportDualNorm{\TripleNorm{\primal}}{\IndexSubset} }
  \eqsepv \forall \primal \in \RR^d 
  \eqfinp
  \label{eq:Variational_formulation_for_the_pseudo_norm_inequality_varphi_bis}  
\end{equation}
When the norm~$\TripleNorm{\cdot}$ is the $\ell_p$-norm ($1<p<+\infty$),
the left hand side inequality 
in~\eqref{eq:Variational_formulation_for_the_pseudo_norm_inequality_varphi_bis}  
coincides with the inequality obtained
in \cite[Proposition~2]{Obozinski-Bach:hal-01412385};
moreover, Equation~\eqref{eq:TopDual_norm_varphi_as_a_supremum} corresponds to 
\cite[Equation~(2)]{Obozinski-Bach:hal-01412385}, and
Equation~\eqref{eq:TopDual_norm_flat_inf-convolution} to 
\cite[Equation~(3)]{Obozinski-Bach:hal-01412385}.
So, we extend the results in \cite[Proposition~2]{Obozinski-Bach:hal-01412385}
beyond the $\ell_p$-norms, and we also provide an upper bound
(right hand side inequality 
in~\eqref{eq:Variational_formulation_for_the_pseudo_norm_inequality_varphi_bis}).

\section{Conclusion}
\label{Conclusion}

The combinatorial expression of the support mapping
makes it difficult to handle it as such in continuous optimization problems,
and we have seen that the Fenchel conjugacy is not adapted.
  In this paper, we have introduced a class of conjugacies that 
  are suitable for functions of the support (\FSM). Each conjugacy is induced by a
  \Capra-coupling that depends on a given source norm on~$\mathbb{R}^d$.
Our main result is that any nondecreasing finite-valued \FSM\
is a \Capra-convex function (that is, is equal to its \Capra-biconjugate)
when both the source norm and its dual norm are orthant-strictly monotonic.
We have also shown the surprising consequence that any nondecreasing finite-valued \FSM\
coincides, on the unit sphere of the source norm, with a proper convex lsc function.
From there, we have obtained exact variational formulations for 
normalized nondecreasing finite-valued \FSM.
For this purpose, we have introduced
sequences of generalized local-top-$\IndexSubset$ and local-$\IndexSubset$-support dual~norms,
generated from the source norm on~$\RR^d$.

The reformulations that we propose for exact sparse optimization problems
make use of $2^d$ new (latent) vectors, making a direct numerical implementation
out of reach. However, the variational formulation may suggest approximations of the \FSM\,
or algorithms making use of the partial convexity that 
our analysis has put to light.
Moreover, we have provided expressions for the \Capra-subdifferential of 
nondecreasing finite-valued \FSM, which can inspire ``gradient-like'' algorithms.
In all cases, the variational formulation obtained yields
a new family of lower and upper bounds for 
null at zero nondecreasing finite-valued \FSM\, as a ratio between two norms;
this may lead to new smooth sparsity inducing terms, proxies for the \FSM.
\bigskip

\textbf{Acknowledgements.}
We want to thank 
Guillaume Obozinski for several discussions on the topic.

\appendix

\section{Properties of the constant along primal rays coupling (\Capra)}
\label{Properties_Constant_along_primal_rays_coupling}

We recall properties of the \Capra-conjugacy, 
induced by the coupling~\Capra\ in Definition~\ref{de:coupling_CAPRA}
(see~\S\ref{Background_on_Fenchel-Moreau_conjugacies}
and \cite{Chancelier-DeLara:2020_CAPRA_OPTIMIZATION}).

Here are expressions for \Capra-conjugates and biconjugates.
We recall that, in convex analysis, 
for any subset $S \subset \RR^d$, \( \sigma_{S} : \RR^d \to \barRR\) denotes the 
\emph{support function\footnote{%
    The support \emph{function} (of a subset) in~\eqref{eq:support_function}
    should be distinguished from the support \emph{mapping}
    in~\eqref{eq:support_mapping}.
  }
  of the subset~$S$}:
\begin{equation}
  \sigma_{S}\np{\dual} = 
  \sup_{\primal\in S} \proscal{\primal}{\dual}
  \eqsepv \forall \dual \in \RR^d
  \eqfinp
  \label{eq:support_function}
\end{equation}

\begin{proposition}(\cite[Proposition~\ref{CAPRA-pr:CAPRA_Fenchel-Moreau_conjugate}]{Chancelier-DeLara:2020_CAPRA_OPTIMIZATION})
  For any function \( \fonctiondual : \RR^d \to \barRR \), 
  the $\CouplingCapra'$-Fenchel-Moreau conjugate is 
  the function \( \SFMr{\fonctiondual}{\CouplingCapra}: \RR^d \to \barRR \)
  given by 
  \begin{equation}
    \SFMr{\fonctiondual}{\CouplingCapra}=
    \LFM{ \fonctiondual }~\circ~\normalized
    \eqfinp 
    \label{eq:CAPRA'_Fenchel-Moreau_conjugate}
  \end{equation}
  For any function \( \fonctionprimal : \RR^d \to \barRR \), 
  the $\CouplingCapra$-Fenchel-Moreau conjugate is 
  the function \( \SFM{\fonctionprimal}{\CouplingCapra}: \RR^d \to \barRR \)
  given by 
  \begin{equation}
    \SFM{\fonctionprimal}{\CouplingCapra}=
    \LFM{ \bp{\InfimalPostComposition{\normalized}{\fonctionprimal}} }
    \eqfinv 
    \label{eq:CAPRA_Fenchel-Moreau_conjugate}
  \end{equation}
  where the \infimalpostcomposition\
  \( \InfimalPostComposition{\normalized}{\fonctionprimal} \) 
  has the expression\footnote{%
    The name ``conditional infimum'' comes from \cite{Witsenhausen:1975b}.
    We chose the notation to stress the analogy with a conditional expectation.
    We adopt the convention that \( \inf \emptyset = +\infty \).}
  \begin{equation}
    \InfimalPostComposition{\normalized}{\fonctionprimal}\np{\primal}
    =
    \inf\defset{\fonctionprimal\np{\primal'}}{
      \normalized\np{\primal'}=\primal}
    =
    \begin{cases}
      \inf_{\lambda > 0} \fonctionprimal\np{\lambda\primal}
      & \text{if } \primal \in \TripleNormSphere  \cup \{0\} 
      \eqfinv 
      \\
      +\infty  
      & \text{if } \primal \not\in \TripleNormSphere  \cup \{0\} 
      \eqfinv 
    \end{cases}
    \label{eq:CAPRA_InfimalPostComposition}
  \end{equation}
  and the $\CouplingCapra$-Fenchel-Moreau biconjugate is 
  the function \( \SFMbi{\fonctionprimal}{\CouplingCapra}: \RR^d \to \barRR \)
  given by
  \begin{equation}
    \SFMbi{\fonctionprimal}{\CouplingCapra}
    = 
    \LFMr{ \bp{ \SFM{\fonctionprimal}{\CouplingCapra} } }
    \circ \normalized
    =
    \LFMbi{ \bp{\InfimalPostComposition{\normalized}{\fonctionprimal}} }
    \circ \normalized
    \eqfinp
    \label{eq:CAPRA_Fenchel-Moreau_biconjugate}
  \end{equation}
  For any subset \( \Uncertain \subset \UNCERTAIN \), 
  the $\CouplingCapra$-Fenchel-Moreau conjugate of the characteristic
  function~\eqref{eq:characteristic_function} of the set~$\Uncertain$
  is given by
  \begin{equation}
    \SFM{ \delta_{\Uncertain} }{\CouplingCapra}
    = \sigma_{ \normalized\np{\Uncertain} } 
    \eqfinp
    \label{eq:one-sided_linear_Fenchel-Moreau_characteristic}
  \end{equation}
  \label{pr:CAPRA_Fenchel-Moreau_conjugate}
\end{proposition}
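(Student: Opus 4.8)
The plan is to reduce the whole proposition to one elementary identity, namely that the \Capra\ coupling factors through the normalization mapping:
\begin{equation*}
  \CouplingCapra\np{\primal,\dual} = \proscal{\normalized\np{\primal}}{\dual}
  \eqsepv \forall \primal \in \RR^d \eqsepv \forall \dual \in \RR^d
  \eqfinp
\end{equation*}
This is checked directly from Definition~\ref{de:coupling_CAPRA} and~\eqref{eq:normalization_mapping}: for $\primal \neq 0$ both sides equal $\proscal{\primal}{\dual} / \TripleNorm{\primal}$, and for $\primal = 0$ both sides vanish since $\normalized\np{0} = 0$. Everything else is then a matter of feeding this identity into the defining formulas for the conjugates, together with the Moreau calculus of $\LowPlus$ and $\UppPlus$ recalled in the text.

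First, I would substitute the identity into~\eqref{eq:Fenchel-Moreau_reverse_conjugate} to obtain, for any $\fonctiondual : \RR^d \to \barRR$,
\begin{equation*}
  \SFMr{\fonctiondual}{\CouplingCapra}\np{\primal} = \sup_{\dual \in \RR^d} \Bp{ \proscal{\normalized\np{\primal}}{\dual} \LowPlus \bp{ -\fonctiondual\np{\dual} } } = \LFM{\fonctiondual}\bp{\normalized\np{\primal}}
  \eqfinv
\end{equation*}
which is~\eqref{eq:CAPRA'_Fenchel-Moreau_conjugate}; and, in the same way, $\SFM{ \delta_{\Uncertain} }{\CouplingCapra}\np{\dual} = \sup_{\primal \in \Uncertain} \proscal{\normalized\np{\primal}}{\dual} = \sigma_{ \normalized\np{\Uncertain} }\np{\dual}$, which is~\eqref{eq:one-sided_linear_Fenchel-Moreau_characteristic}.

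Next, for the $\CouplingCapra$-conjugate of a general $\fonctionprimal : \RR^d \to \barRR$, I would start from~\eqref{eq:Fenchel-Moreau_conjugate}, insert the identity, and break up the supremum over~$\primal$ according to the value of $\normalized\np{\primal}$; since $\proscal{\normalized\np{\primal}}{\dual}$ is always finite, the inner supremum over a fiber of $\normalized$ turns into minus a conditional infimum, giving
\begin{equation*}
  \SFM{\fonctionprimal}{\CouplingCapra}\np{\dual} = \sup_{\primal' \in \RR^d} \Bp{ \proscal{\primal'}{\dual} \LowPlus \bp{ -\InfimalPostComposition{\normalized}{\fonctionprimal}\np{\primal'} } } = \LFM{ \bp{ \InfimalPostComposition{\normalized}{\fonctionprimal} } }\np{\dual}
  \eqfinp
\end{equation*}
the empty fibers (over $\primal' \not\in \TripleNormSphere \cup \{0\}$) contributing $-\infty$ to the outer supremum, consistently with $\InfimalPostComposition{\normalized}{\fonctionprimal}\np{\primal'} = +\infty$ there. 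This is~\eqref{eq:CAPRA_Fenchel-Moreau_conjugate}, and the explicit form~\eqref{eq:CAPRA_InfimalPostComposition} is then read off from the fibers of $\normalized$: empty outside $\TripleNormSphere \cup \{0\}$, equal to $\{0\}$ over $0$, and equal to the open ray $\bset{\lambda\primal'}{\lambda > 0}$ over $\primal' \in \TripleNormSphere$ (because then $\TripleNorm{\primal'} = 1$).

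Finally, I would deduce the biconjugate formula~\eqref{eq:CAPRA_Fenchel-Moreau_biconjugate} by composition: from~\eqref{eq:Fenchel-Moreau_biconjugate} we have $\SFMbi{\fonctionprimal}{\CouplingCapra} = \bp{ \SFM{\fonctionprimal}{\CouplingCapra} }^{\CouplingCapra'}$, so applying the already proven~\eqref{eq:CAPRA'_Fenchel-Moreau_conjugate} to $\fonctiondual = \SFM{\fonctionprimal}{\CouplingCapra}$ gives $\SFMbi{\fonctionprimal}{\CouplingCapra} = \LFMr{ \bp{ \SFM{\fonctionprimal}{\CouplingCapra} } } \circ \normalized$; substituting~\eqref{eq:CAPRA_Fenchel-Moreau_conjugate} and using that the Fenchel biconjugate is $\LFMr{}\circ\LFM{}$ turns this into $\LFMbi{ \bp{ \InfimalPostComposition{\normalized}{\fonctionprimal} } } \circ \normalized$. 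I do not expect a genuine obstacle here; the only delicate point is justifying the exchange of the nested suprema with the lower addition $\LowPlus$ in the conjugate computation, together with the correct bookkeeping of the $\pm\infty$ values and of the empty fibers of $\normalized$, which is routine given the Moreau rules for $\LowPlus$ and $\UppPlus$.
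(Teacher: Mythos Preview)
Your argument is correct and is exactly the natural one: the single identity \( \CouplingCapra\np{\primal,\dual} = \proscal{\normalized\np{\primal}}{\dual} \) reduces every item to a definition chase, and your handling of the fibers of~$\normalized$ and the $\LowPlus$ bookkeeping is sound. Note, however, that the paper does not give its own proof of this proposition: it is stated in the Appendix as a recalled result from \cite[Proposition~\ref{CAPRA-pr:CAPRA_Fenchel-Moreau_conjugate}]{Chancelier-DeLara:2020_CAPRA_OPTIMIZATION}, so there is nothing to compare your approach against here.
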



Here are characterizations of the \Capra-convex functions (see Definition~\ref{de:coupling-convex_function}).

\begin{proposition}(\cite[Proposition~\ref{CAPRA-pr:CAPRA_convex_functions}]{Chancelier-DeLara:2020_CAPRA_OPTIMIZATION})
  \label{pr:CAPRA_convex_functions}  
  A function is $\CouplingCapra$-convex 
  if and only if it is the composition of 
  a closed convex function on~$\RR^d$
  with the normalization mapping~\eqref{eq:normalization_mapping}.
  More precisely, for any function \( \fonctionuncertain : \RR^d \to \barRR \),
  we have the equivalences
  \begin{subequations}
    \begin{align*}
      \fonctionuncertain \textrm{ is }
      \CouplingCapra\textrm{-convex}
       &\Leftrightarrow 
        \fonctionuncertain =
        \SFMbi{\fonctionuncertain}{\CouplingCapra}
      \\
      &\Leftrightarrow
        \fonctionuncertain = \LFMr{ \bp{ \SFM{\fonctionuncertain}{\CouplingCapra}}} 
        \circ \normalized
        \textrm{ (where } \LFMr{ \bp{ \SFM{\fonctionuncertain}{\CouplingCapra} } } 
        \textrm{ is a closed convex function) }
      \\
      &\Leftrightarrow \textrm{ there exists a closed convex function }
        \fonctionprimal: \RR^d  \to \barRR 
        \textrm{ such that }
        \fonctionuncertain = \fonctionprimal \circ \normalized
        \eqfinp
    \end{align*}
  \end{subequations}
\end{proposition}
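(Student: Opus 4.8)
The plan is to derive all the stated equivalences from Proposition~\ref{pr:CAPRA_Fenchel-Moreau_conjugate} together with two elementary facts. The first is the general principle of generalized convexity recalled before Definition~\ref{de:coupling-convex_function}: a function is $\CouplingCapra$-convex if and only if it is a $\CouplingCapra'$-conjugate, that is, of the form $\SFMr{\fonctiondual}{\CouplingCapra}$ for some $\fonctiondual : \RR^d \to \barRR$. The second is the classical description of the range of the Fenchel conjugation operator on~$\RR^d$ \cite[Theorem~5]{Rockafellar:1974}: a function $h : \RR^d \to \barRR$ is of the form $\LFM{\fonctiondual}$ for some $\fonctiondual : \RR^d \to \barRR$ if and only if $h$ is a closed convex function --- ``only if'' because any Fenchel conjugate is a pointwise supremum of continuous affine functions, hence closed convex, and ``if'' because a closed convex~$h$ equals its own Fenchel biconjugate $\LFMbi{h}$ and is therefore itself the Fenchel conjugate of~$\LFM{h}$ (here using that, on~$\RR^d$ with its self-dual pairing, $\LFMr{}$ and $\LFM{}$ are one and the same operator).

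First I would prove the ``if and only if'' assertion, i.e.\ the equivalence between the first and the third displayed statements. By the general principle, $\fonctionuncertain$ is $\CouplingCapra$-convex if and only if $\fonctionuncertain = \SFMr{\fonctiondual}{\CouplingCapra}$ for some~$\fonctiondual$; by~\eqref{eq:CAPRA'_Fenchel-Moreau_conjugate} this reads $\fonctionuncertain = \LFM{\fonctiondual}\circ\normalized$ for some~$\fonctiondual$; and by the description of the range of Fenchel conjugation, this is in turn equivalent to the existence of a closed convex $\fonctionprimal : \RR^d \to \barRR$ with $\fonctionuncertain = \fonctionprimal\circ\normalized$. (If one prefers an explicit witness for the direction ``$\fonctionuncertain = \fonctionprimal\circ\normalized$ with $\fonctionprimal$ closed convex'' $\Rightarrow$ ``$\fonctionuncertain$ is $\CouplingCapra$-convex'', one takes $\fonctiondual = \LFM{\fonctionprimal}$, so that $\LFM{\fonctiondual}=\fonctionprimal$.)

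Next I would settle the middle displayed statement and thereby close the cycle of equivalences. By~\eqref{eq:CAPRA_Fenchel-Moreau_biconjugate} one has $\SFMbi{\fonctionuncertain}{\CouplingCapra} = \LFMr{\bp{\SFM{\fonctionuncertain}{\CouplingCapra}}}\circ\normalized$, so $\fonctionuncertain$ is $\CouplingCapra$-convex (i.e.\ $\fonctionuncertain = \SFMbi{\fonctionuncertain}{\CouplingCapra}$) exactly when $\fonctionuncertain = \LFMr{\bp{\SFM{\fonctionuncertain}{\CouplingCapra}}}\circ\normalized$; and $\LFMr{\bp{\SFM{\fonctionuncertain}{\CouplingCapra}}}$ is a (reverse) Fenchel conjugate, hence closed convex by the fact recalled above, which justifies the parenthetical remark in the statement and, taking $\fonctionprimal = \LFMr{\bp{\SFM{\fonctionuncertain}{\CouplingCapra}}}$, yields directly the implication ``middle $\Rightarrow$ third''. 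Combined with the previous paragraph, the three displayed statements are pairwise equivalent.

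The only point requiring genuine care --- hence the main, and fairly modest, obstacle --- is the precise identification of the range of Fenchel conjugation with the \emph{closed} convex functions, rather than merely the proper convex lsc ones: one must keep the two improper closed convex functions in the picture, noting that $\LFM{\fonctiondual}\equiv -\infty$ precisely when $\fonctiondual\equiv +\infty$, and recalling (Footnote~\ref{ft:closed_convex_function}) that a closed convex function taking at least one finite value is automatically proper convex lsc. Once this is pinned down, the argument is entirely a matter of substituting the formulas of Proposition~\ref{pr:CAPRA_Fenchel-Moreau_conjugate}.
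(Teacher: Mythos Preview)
Your argument is correct. Note, however, that the present paper does not supply its own proof of this proposition: it is quoted verbatim from \cite[Proposition~\ref{CAPRA-pr:CAPRA_convex_functions}]{Chancelier-DeLara:2020_CAPRA_OPTIMIZATION} and placed in the appendix as background, so there is no in-paper proof to compare against. That said, your derivation is precisely the natural one and is what the cited reference does: combine the general-convexity fact that the $\CouplingCapra$-convex functions are exactly the $\CouplingCapra'$-conjugates, the identity~\eqref{eq:CAPRA'_Fenchel-Moreau_conjugate} expressing every $\CouplingCapra'$-conjugate as $\LFM{\fonctiondual}\circ\normalized$, and the classical identification of the range of Fenchel conjugation with the closed convex functions; the middle equivalence then drops out of~\eqref{eq:CAPRA_Fenchel-Moreau_biconjugate}. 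Your care with the improper cases (the constant functions $\pm\infty$) is appropriate and closes the only possible loophole.
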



Following the definition of the subdifferential
of a function with respect to a duality 
in \cite{Akian-Gaubert-Kolokoltsov:2002},
the \emph{\Capra-subdifferential} of 
the function \( \fonctionprimal : \RR^d \to \barRR \) 
at~\( \primal \in  \RR^d \) has the following expressions
\begin{subequations}
  \begin{align}
    \partial_{\CouplingCapra}\fonctionprimal\np{\primal} 
    &=
      \defset{ \dual \in \RR^d }{ %
      \CouplingCapra\np{\primal', \dual} 
      \LowPlus \bp{ -\fonctionprimal\np{\primal'} } 
      \leq 
      \CouplingCapra\np{\primal, \dual} 
      \LowPlus \bp{ -\fonctionprimal\np{\primal} } 
      \eqsepv \forall \primal' \in \RR^d }
      \label{eq:Capra-subdifferential_a}
    \\
    &=
      \defset{ \dual \in \RR^d }{ %
      \SFM{\fonctionprimal}{\CouplingCapra}\np{\dual}=
      \CouplingCapra\np{\primal, \dual} 
      \LowPlus \bp{ -\fonctionprimal\np{\primal} } }
      \label{eq:Capra-subdifferential_b}
    \\
    &=
      \defset{ \dual \in \RR^d }{ %
      \LFM{ \bp{\InfimalPostComposition{\normalized}{\fonctionprimal}} }\np{\dual}=
      \proscal{\normalized\np{\primal}}{\dual} 
      \LowPlus \bp{ -\fonctionprimal\np{\primal} } }
      \eqfinv
      \label{eq:Capra-subdifferential_c}
      \intertext{so that, thanks to the definition~\eqref{eq:normalization_mapping}
      of the normalization mapping~$\normalized$, we deduce that}
      \partial_{\CouplingCapra}\fonctionprimal\np{0} 
    &=
      \defset{ \dual \in \RR^d }{ %
      \LFM{ \bp{\InfimalPostComposition{\normalized}{\fonctionprimal}} }\np{\dual}=
      -\fonctionprimal\np{0} }
      \label{eq:Capra-subdifferential_zero}
    \\
    \partial_{\CouplingCapra}\fonctionprimal\np{\primal} 
    &=
      \defset{ \dual \in \RR^d }{ %
      \LFM{ \bp{\InfimalPostComposition{\normalized}{\fonctionprimal}} }\np{\dual}=
      \frac{ \proscal{\primal}{\dual} }{ \TripleNorm{\primal} } 
      \LowPlus \bp{ -\fonctionprimal\np{\primal} } }
      \eqsepv \forall \primal \in \RR^d\backslash\{0\} 
      \eqfinp 
      \label{eq:Capra-subdifferential_neq_zero}
  \end{align}
  \label{eq:Capra-subdifferential}
\end{subequations}

\section{Material on local-coordinate-$\IndexSubset$ and generalized local-top-$\IndexSubset$ norms}
\label{Coordinate-k_and_dual_coordinate-k_norms}

We start, 
in \S\ref{Orthant-monotonic_and_orthant-strictly_monotonic_norms}, by providing background on 
orthant-monotonic and orthant-strictly monotonic norms.
Then, in~\S\ref{Definition_of_coordinate-k_and_dual_coordinate-k_norms},
we introduce local-coordinate-$\IndexSubset$ norms
and dual local-coordinate-$\IndexSubset$ norms, 
and in~\S\ref{Definitions_of_generalized_top-k_and_k-support_dual_norms},
we introduce generalized local-top-$\IndexSubset$ and
  local-$\IndexSubset$-support dual~norms; they are norms on the
subspaces~$\FlatRR_\IndexSubset$ of $\RR^d$ in~\eqref{eq:FlatRR} constructed from a source norm.

\subsection{Orthant-monotonic and orthant-strictly monotonic norms}
\label{Orthant-monotonic_and_orthant-strictly_monotonic_norms}

\subsubsubsection{Dual norms}

We recall that the expression
\( \TripleNorm{\dual}_\star = 
  \sup_{ \TripleNorm{\primal} \leq 1 } \proscal{\primal}{\dual} \), 
  \( \forall \dual \in \RR^d \),
defines a norm on~$\RR^d$, 
called the \emph{dual norm} \( \TripleNormDual{\cdot} \).
  By definition of the dual norm, we have the inequality
  \begin{equation}
    \proscal{\primal}{\dual} \leq 
    \TripleNorm{\primal} \times \TripleNormDual{\dual} 
    \eqsepv \forall  \np{\primal,\dual} \in \RR^d \times \RR^d 
    \eqfinp 
    \label{eq:norm_dual_norm_inequality}
  \end{equation}
We denote the unit sphere~$\TripleNormDualSphere$ and the unit ball~$\TripleNormDualBall$
of the dual norm~$\TripleNormDual{\cdot}$ by 
\begin{equation}
      \TripleNormDualSphere = 
      \defset{\dual \in \RR^d}{\TripleNormDual{\dual} = 1} 
      \eqsepv
    \TripleNormDualBall = 
      \defset{\dual \in \RR^d}{\TripleNormDual{\dual} \leq 1} 
      \eqfinp
\label{eq:triplenorm_Dual_unit_sphere}
\end{equation}
%
Denoting by $\sigma_S$ the \emph{support function} of the set~$S \subset \RR^d$
  (\( \sigma_S\np{\dual}=\sup_{\primal \in S} \proscal{\primal}{\dual} \)),
  we have
  \begin{equation}
    \TripleNorm{\cdot} = \sigma_{\TripleNormDualBall} = \sigma_{\TripleNormDualSphere} 
    \mtext{ and } 
    \TripleNormDual{\cdot} = \sigma_{\TripleNormBall} = \sigma_{\TripleNormSphere}
    \eqfinv
    \label{eq:norm_dual_norm}
  \end{equation}
  where \( \TripleNormDualBall =\TripleNormBall^{\odot} 
    = \defset{\dual \in \RR^d}{\proscal{\primal}{\dual} \leq 1 
      \eqsepv \forall \primal \in \TripleNormBall } \)
  is the polar set~\( \TripleNormBall^{\odot} \) 
  of the unit ball~\( \TripleNormBall \).

We recall properties of orthant-monotonic 
and orthant-strictly monotonic norms.

\begin{proposition}(\cite[Proposition~\ref{OSM-pr:orthant-monotonic}]{Chancelier-DeLara:2020_OSM},
  \cite[Theorem~ 2.26]{Gries:1967},
  \cite[Theorem 3.2]{Marques_de_Sa-Sodupe:1993})    
  Let $\TripleNorm{\cdot}$ be a norm on~$\RR^d$.
  The following assertions are equivalent.
  \begin{enumerate}
  \item
    \label{it:OM}
    The norm $\TripleNorm{\cdot}$ is orthant-monotonic.
  \item
    \label{it:OM_dual}
    The dual norm $\TripleNormDual{\cdot}$ is orthant-monotonic.
  \item 
    \label{it:ICS}
    The norm $\TripleNorm{\cdot}$ is \emph{increasing with the coordinate subspaces},
    in the sense that, for any \( \primal \in \RR^d \)
    and any \( J \subset \IndexSubset \subset\IndexSet \),
    we have $ \TripleNorm{\primal_{J}} \leq \TripleNorm{\primal_\IndexSubset}$.
  \end{enumerate}
  \label{pr:orthant-monotonic}
\end{proposition}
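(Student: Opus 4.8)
The plan is to establish the cycle $\ref{it:OM}\Leftrightarrow\ref{it:ICS}$ first, then to check that property~\ref{it:ICS} is unchanged when a norm is replaced by its dual, and finally to read off $\ref{it:OM_dual}$ by invoking $\ref{it:OM}\Leftrightarrow\ref{it:ICS}$ for the norm $\TripleNormDual{\cdot}$ itself. Throughout I would use the equivalent reformulation of~\ref{it:ICS}: $\TripleNorm{\primal_J}\leq\TripleNorm{\primal}$ for every $\primal\in\RR^d$ and every $J\subset\IndexSet$ (one takes $\IndexSubset=\IndexSet$ in~\ref{it:ICS}; conversely, for $J\subset\IndexSubset$ one applies it to $\primal_\IndexSubset$ and uses $(\primal_\IndexSubset)_J=\primal_J$). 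For $\ref{it:OM}\Rightarrow\ref{it:ICS}$, given $J\subset\IndexSubset\subset\IndexSet$ and $\primal\in\RR^d$ the vectors $\primal_J$ and $\primal_\IndexSubset$ satisfy $|\primal_J|\leq|\primal_\IndexSubset|$ componentwise and $\primal_J\circ\primal_\IndexSubset\geq 0$ (the entrywise products equal $\primal_i^2\geq 0$ on $J$ and vanish elsewhere), so orthant-monotonicity gives $\TripleNorm{\primal_J}\leq\TripleNorm{\primal_\IndexSubset}$.

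The implication $\ref{it:ICS}\Rightarrow\ref{it:OM}$ is the core of the argument, and here I would move from $\primal'$ to $\primal$ one coordinate at a time. Assuming $|\primal|\leq|\primal'|$ and $\primal\circ\primal'\geq 0$, set $\primal^{(0)}=\primal'$ and, for $k=1,\dots,d$, let $\primal^{(k)}$ be obtained from $\primal^{(k-1)}$ by changing the $k$-th entry to $\primal_k$, so that $\primal^{(d)}=\primal$ and, at step $k$, the $k$-th entry of $\primal^{(k-1)}$ is still $\primal'_k$. The hypotheses force $\primal_k=s_k\,\primal'_k$ for some $s_k\in[0,1]$, hence $\primal^{(k)}=\primal^{(k-1)}-(1-s_k)\,\primal^{(k-1)}_{\{k\}}$; writing $t=1-s_k\in[0,1]$ and
\[
\primal^{(k-1)}-t\,\primal^{(k-1)}_{\{k\}}=(1-t)\,\primal^{(k-1)}+t\,\primal^{(k-1)}_{\IndexSet\setminus\{k\}},
\]
the triangle inequality together with the bound $\TripleNorm{\primal^{(k-1)}_{\IndexSet\setminus\{k\}}}\leq\TripleNorm{\primal^{(k-1)}}$ (the reformulation of~\ref{it:ICS}) gives $\TripleNorm{\primal^{(k)}}\leq\TripleNorm{\primal^{(k-1)}}$; iterating over $k$ then yields $\TripleNorm{\primal}\leq\TripleNorm{\primal'}$.

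It remains to show that~\ref{it:ICS} passes between a norm and its dual. If $\TripleNorm{\primal_J}\leq\TripleNorm{\primal}$ for all $\primal$ and $J$, then for any $\dual\in\RR^d$ and $J\subset\IndexSet$, using the self-duality~\eqref{eq:orthogonal_projection_self-dual} of the coordinate projection,
\[
\TripleNormDual{\dual_J}=\sup_{\TripleNorm{\primal}\leq 1}\proscal{\primal}{\dual_J}=\sup_{\TripleNorm{\primal}\leq 1}\proscal{\primal_J}{\dual}\leq\TripleNormDual{\dual},
\]
the last inequality because $\TripleNorm{\primal_J}\leq\TripleNorm{\primal}\leq 1$ puts $\primal_J$ in the unit ball $\TripleNormBall$. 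Since the dual of $\TripleNormDual{\cdot}$ is again $\TripleNorm{\cdot}$, the same computation run backwards shows~\ref{it:ICS} for $\TripleNormDual{\cdot}$ implies~\ref{it:ICS} for $\TripleNorm{\cdot}$, so~\ref{it:ICS} holds for $\TripleNorm{\cdot}$ if and only if it holds for $\TripleNormDual{\cdot}$. Chaining $\ref{it:OM}\Leftrightarrow\ref{it:ICS}$ for $\TripleNorm{\cdot}$, this self-duality of~\ref{it:ICS}, and $\ref{it:OM}\Leftrightarrow\ref{it:ICS}$ for $\TripleNormDual{\cdot}$ then closes all three equivalences.

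I expect the single delicate step to be the interpolation in $\ref{it:ICS}\Rightarrow\ref{it:OM}$: convexity of the norm is what lets one fill the gap between the endpoint cases $s_k=1$ (supplied by~\ref{it:ICS}) and $s_k=0$ (trivial). The remaining work is routine manipulation of the definitions of the dual norm and of $\TripleNormBall$.
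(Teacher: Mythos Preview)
Your proof is correct and complete. The paper itself does not supply a proof of this proposition: it is stated with attribution to \cite{Chancelier-DeLara:2020_OSM}, \cite{Gries:1967}, and \cite{Marques_de_Sa-Sodupe:1993}, and then used as a black box elsewhere in the appendix. So there is no paper proof to compare against; you have filled in what the paper outsourced.

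The argument you give is clean: the implication $\ref{it:OM}\Rightarrow\ref{it:ICS}$ is immediate from the definition, the coordinate-by-coordinate convex interpolation for $\ref{it:ICS}\Rightarrow\ref{it:OM}$ is exactly the right idea, and the self-duality of~\ref{it:ICS} via $\proscal{\primal}{\dual_J}=\proscal{\primal_J}{\dual}$ closes the loop with~\ref{it:OM_dual}. One cosmetic slip in your closing commentary: the endpoint $s_k=1$ is the trivial case (no change), while $s_k=0$ (zeroing a coordinate) is the one supplied by~\ref{it:ICS}; you have these labels swapped. This does not affect the proof itself.
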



\begin{proposition}(\cite[Proposition~\ref{OSM-pr:orthant-strictly_monotonic}]{Chancelier-DeLara:2020_OSM})
  \label{pr:orthant-strictly_monotonic}
  Let $\TripleNorm{\cdot}$ be a norm on~$\RR^d$.
  The following assertions are equivalent.
  \begin{enumerate}
  \item
    \label{it:OSM}
    The norm $\TripleNorm{\cdot}$ is orthant-strictly monotonic.
  \item
    \label{it:SICS}
    The norm $\TripleNorm{\cdot}$ is \emph{strictly increasing
      with the coordinate subspaces} 
    in the sense that,
    for any \( \primal \in \RR^d \)
    and any \( J \subsetneq \IndexSubset \subset\IndexSet \),
    we have $ \primal_J \neq \primal_\IndexSubset \implies
    \TripleNorm{\primal_{J}} < \TripleNorm{\primal_\IndexSubset}$.
  \item
    \label{it:SDC}
    For any vector \( u \in \RR^d\backslash\{0\} \), 
    there exists a vector \( v \in \RR^d\backslash\{0\} \)
    such that \( \Support{v} = \Support{u} \),
    that \( u~\circ~v \geq 0 \), and that 
    $v$ is \( \TripleNorm{\cdot} \)-dual to~$u$, that is,
    \( \proscal{u}{v} = \TripleNorm{u} \times \TripleNormDual{v} \). 
  \end{enumerate}
\end{proposition}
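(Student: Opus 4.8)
The plan is to prove the cycle of implications \ref{it:OSM}~$\Rightarrow$~\ref{it:SDC}~$\Rightarrow$~\ref{it:SICS}~$\Rightarrow$~\ref{it:OSM}, after two preliminary observations. \emph{First}, each of \ref{it:OSM}, \ref{it:SICS}, \ref{it:SDC} implies that $\TripleNorm{\cdot}$ is orthant-monotonic, so that Proposition~\ref{pr:orthant-monotonic} and its consequences become available throughout: the dual norm $\TripleNormDual{\cdot}$ and every restriction norm $\TripleNorm{\cdot}_{\IndexSubset}$, $\TripleNorm{\cdot}_{\IndexSubset,\star}$ are orthant-monotonic, and $\TripleNorm{\cdot}$ is increasing with the coordinate subspaces (item~\ref{it:ICS}). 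Indeed, for \ref{it:OSM} it suffices to apply orthant-strict monotonicity to $\primal$ and $(1+\epsilon)\primal'$ and let $\epsilon\downarrow 0$; \ref{it:SICS} implies item~\ref{it:ICS} since $\TripleNorm{\primal_\IndexSubset}\le\TripleNorm{\primal_{\IndexSubset'}}$ whenever $\IndexSubset\subset\IndexSubset'$ (either $\primal_\IndexSubset=\primal_{\IndexSubset'}$, or \ref{it:SICS} applies), hence orthant-monotonicity by Proposition~\ref{pr:orthant-monotonic}; and from \ref{it:SDC}, if $|\primal|\le|\primal'|$, $\primal\circ\primal'\ge 0$ and $\primal\neq 0$, a vector $v$ as in \ref{it:SDC} yields $\TripleNorm{\primal'}\ge\proscal{\primal'}{v}\ge\proscal{\primal}{v}=\TripleNorm{\primal}$, the middle inequality because on $\Support{v}=\Support{\primal}$ one has $v_i\primal_i>0$ while $|\primal'_i|\ge|\primal_i|$ with $\primal'_i$ of the sign of $\primal_i$. \emph{Second}, for an orthant-monotonic norm and any $\dual\in\FlatRR_{\IndexSubset}$ one has $\TripleNormDual{\dual}=\TripleNorm{\dual}_{\IndexSubset,\star}$: this follows from the equality $\{\primal_\IndexSubset : \TripleNorm{\primal}\le 1\}=\FlatRR_{\IndexSubset}\cap\TripleNormBall$ (the inclusion ``$\subseteq$'' uses orthant-monotonicity), combined with~\eqref{eq:orthogonal_projection_self-dual}, \eqref{eq:norm_dual_norm} and~\eqref{eq:K_star=sigma}.

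For \ref{it:OSM}~$\Rightarrow$~\ref{it:SDC}, fix $u\neq 0$, set $L=\Support{u}$, and work inside $\FlatRR_{L}$, where the restriction norm $\TripleNorm{\cdot}_{L}$ is again orthant-strictly monotonic and $u$ has full support. I would maximize $v\mapsto\proscal{u}{v}$ over the compact convex set of $v\in\FlatRR_{L}$ with $\TripleNorm{v}_{L,\star}\le 1$ and $u\circ v\ge 0$, obtaining a maximizer $v^{\star}$. Starting from any $\bar v\in\FlatRR_{L}$ with $\TripleNorm{\bar v}_{L,\star}=1$ and $\proscal{u}{\bar v}=\TripleNorm{u}$, its ``orthant truncation'' $v^{\circ}$ (with $v^{\circ}_i=\bar v_i$ when $u_i\bar v_i\ge 0$ and $v^{\circ}_i=0$ otherwise) is feasible — $|v^{\circ}|\le|\bar v|$ and $v^{\circ}\circ\bar v\ge 0$ give $\TripleNorm{v^{\circ}}_{L,\star}\le 1$ by orthant-monotonicity — and satisfies $\proscal{u}{v^{\circ}}\ge\proscal{u}{\bar v}=\TripleNorm{u}$ since only nonnegative terms are kept; hence the constrained maximum equals $\TripleNorm{u}$ and $v^{\star}$ attains it. Then $\TripleNorm{v^{\star}}_{L,\star}=1$ (by the dual-norm inequality~\eqref{eq:norm_dual_norm_inequality} inside $\FlatRR_{L}$), so $v^{\star}$ is $\TripleNorm{\cdot}$-dual to $u$ by the second preliminary fact, with $u\circ v^{\star}\ge 0$. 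Finally $\Support{v^{\star}}=L$: were $v^{\star}_j=0$ for some $j\in L$, then $\TripleNorm{u}=\proscal{u}{v^{\star}}=\proscal{u_{L\setminus\{j\}}}{v^{\star}}\le\TripleNorm{u_{L\setminus\{j\}}}<\TripleNorm{u}$ by orthant-strict monotonicity, a contradiction. This gives \ref{it:SDC}.

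For \ref{it:SDC}~$\Rightarrow$~\ref{it:SICS} and \ref{it:SICS}~$\Rightarrow$~\ref{it:OSM} I argue by contradiction, the common engine being the convexity of $\TripleNorm{\cdot}$. If \ref{it:SICS} fails there are $J\subsetneq\IndexSubset$ and $\primal$ with $\primal_J\neq\primal_\IndexSubset$ and (by orthant-monotonicity) $\TripleNorm{\primal_J}=\TripleNorm{\primal_\IndexSubset}$; then the segment joining $\primal_J$ to $\primal_\IndexSubset$ lies entirely on the sphere of radius $\TripleNorm{\primal_\IndexSubset}$, since its points are sandwiched in modulus and orthant between the endpoints. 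Applying \ref{it:SDC} to an interior point $u=(1-s^{\star})\primal_J+s^{\star}\primal_\IndexSubset$ with $s^{\star}\in{]0,1[}$, a support-matched aligned dual $v$ normalized by $\TripleNormDual{v}=1$ makes the affine map $s\mapsto\proscal{(1-s)\primal_J+s\primal_\IndexSubset}{v}$ dominated on $[0,1]$ by the constant $\TripleNorm{\primal_\IndexSubset}$, equal to it at $s=s^{\star}$, and of strictly positive slope $\sum_{i\in\IndexSubset\setminus J}\primal_iv_i>0$ (at least one $i\in\Support{\primal}\cap(\IndexSubset\setminus J)$ lies in $\Support{v}$ and contributes a positive term), hence strictly larger than $\TripleNorm{\primal_\IndexSubset}$ at $s=1$, a contradiction. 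Similarly, if \ref{it:OSM} fails, take $|\primal|<|\primal'|$, $\primal\circ\primal'\ge 0$, $\TripleNorm{\primal}=\TripleNorm{\primal'}=1$, $\primal,\primal'\neq 0$, and a coordinate $j$ with $|\primal_j|<|\primal'_j|$. If $\primal_j\neq 0$, the convex map $t\mapsto\TripleNorm{\primal+(t-\primal_j)e_j}$ (changing only entry $j$, $e_j$ the $j$-th coordinate vector) equals $1$ on the segment between $\primal_j$ and $\primal'_j$, so by convexity and blow-up at infinity its value at $t=0$ is $\ge 1$; but that value is $\TripleNorm{\pi_{\IndexSet\setminus\{j\}}(\primal)}\le\TripleNorm{\primal}=1$ by orthant-monotonicity, forcing $\TripleNorm{\pi_{\IndexSet\setminus\{j\}}(\primal)}=\TripleNorm{\primal}$, which contradicts \ref{it:SICS} for $\Support{\primal}\setminus\{j\}\subsetneq\Support{\primal}$. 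If instead every such $j$ has $\primal_j=0$, then $\Support{\primal}\subsetneq\Support{\primal'}$ and $\primal=\primal'_{\Support{\primal}}$ (equal moduli and signs on $\Support{\primal}$), so \ref{it:SICS} gives $\TripleNorm{\primal}=\TripleNorm{\primal'_{\Support{\primal}}}<\TripleNorm{\primal'}=1$, again a contradiction. This closes the cycle.

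The step I expect to be the main obstacle is exactly the passage from the inequality ``$\le$'' (orthant-monotonicity) to the strict ``$<$''; the plan handles it uniformly through the convexity of $\TripleNorm{\cdot}$, namely the fact that a segment lying on a sphere of the norm cannot contain an interior point admitting a support-matched, orthant-aligned dual vector (used in \ref{it:SDC}~$\Rightarrow$~\ref{it:SICS}) and the one-dimensional convex-restriction argument (used in \ref{it:SICS}~$\Rightarrow$~\ref{it:OSM}). A secondary technical point is the ``orthant truncation'' in \ref{it:OSM}~$\Rightarrow$~\ref{it:SDC}, which produces a dual vector inside the orthant of $u$ without increasing the restriction dual norm.
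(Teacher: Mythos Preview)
The paper does not actually prove this proposition; it is quoted verbatim from \cite[Proposition~\ref{OSM-pr:orthant-strictly_monotonic}]{Chancelier-DeLara:2020_OSM} and used as a black box. So there is no ``paper's own proof'' to compare against here.

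That said, your argument is sound. The two preliminary observations are correct and set things up well: each of the three conditions indeed forces orthant-monotonicity (your SDC~$\Rightarrow$~OM argument is clean), and for an orthant-monotonic norm one has $\TripleNorm{\cdot}_{K,\star}=\TripleNorm{\cdot}_{\star,K}$, which the paper itself invokes elsewhere. In \ref{it:OSM}~$\Rightarrow$~\ref{it:SDC}, the orthant-truncation device is the right idea; note that you are tacitly using that $\TripleNorm{\cdot}_{L,\star}$ is orthant-monotonic, which follows from your second preliminary (it equals the restriction of $\TripleNormDual{\cdot}$, and $\TripleNormDual{\cdot}$ is orthant-monotonic by Proposition~\ref{pr:orthant-monotonic}). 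In \ref{it:SDC}~$\Rightarrow$~\ref{it:SICS}, your affine-versus-constant argument is correct: every index $i\in\IndexSubset\setminus J$ with $\primal_i\neq 0$ lies in $\Support{u}=\Support{v}$ and contributes a strictly positive term $\primal_iv_i$, so the slope is indeed positive. In \ref{it:SICS}~$\Rightarrow$~\ref{it:OSM}, Case~1 works; the ``blow-up at infinity'' remark is unnecessary, since convexity of $g$ together with $g\equiv 1$ on a nondegenerate interval to one side of $0$ already forces $g(0)\ge 1$ by the three-point inequality. Case~2 is handled correctly by reducing to a proper support inclusion.

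One small presentational point: in Case~1 of \ref{it:SICS}~$\Rightarrow$~\ref{it:OSM}, you should state explicitly that $\primal_j$ and $\primal'_j$ have the same (nonzero) sign, so that $0$ lies strictly outside the segment $[\primal_j,\primal'_j]$ (or $[\primal'_j,\primal_j]$); this is what makes the convexity step bite.
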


\subsection{Local-coordinate-$\IndexSubset$ and dual local-coordinate-$\IndexSubset$ norms}
\label{Definition_of_coordinate-k_and_dual_coordinate-k_norms}

After their definitions
in~\S\ref{Definition_of_local-coordinate-IndexSubset_and_dual_local-coordinate-IndexSubset_norms},
we study properties of dual local-coordinate-$\IndexSubset$ norms
and of local-coordinate-$\IndexSubset$ norms
in~\S\ref{Properties_of_dual_coordinate-k_norms}.

\subsubsection{Definition of local-coordinate-$\IndexSubset$ and dual
  local-coordinate-$\IndexSubset$ norms}
\label{Definition_of_local-coordinate-IndexSubset_and_dual_local-coordinate-IndexSubset_norms}

For the sake of completeness, we duplicate the
Definition~\ref{de:coordinate_norm} here.
      %
For any subset \( \IndexSubset \subset \IndexSet \), 
we call \emph{local-coordinate-$\IndexSubset$ norm} 
(associated with the source norm~$\TripleNorm{\cdot}$)
the norm \( \CoordinateNorm{\TripleNorm{\cdot}}{\IndexSubset} \)
(on the subspace $\FlatRR_\IndexSubset$ in~\eqref{eq:FlatRR}) 
given by 
\begin{equation*}
  \CoordinateNorm{\TripleNorm{\cdot}}{\IndexSubset} 
= \bp{ \CoordinateNormDual{\TripleNorm{\cdot}}{\IndexSubset} }_\star 
\eqfinv
\end{equation*}
that is, whose dual norm (on the subspace $\FlatRR_\IndexSubset$) is the 
\emph{dual local-coordinate-$\IndexSubset$ norm}, denoted by
\( \CoordinateNormDual{\TripleNorm{\cdot}}{\IndexSubset} \), 
with expression
\begin{equation*}
  \CoordinateNormDual{\TripleNorm{\dual}}{\IndexSubset}
  =
  \sup_{\IndexSubsetLess \subset \IndexSubset} \TripleNorm{\dual_{\IndexSubsetLess}}_{\IndexSubsetLess,\star} 
  \eqsepv \forall \dual \in \FlatRR_\IndexSubset
  \eqfinv
\end{equation*}
where the $\SetStar{\IndexSubset}$-norm \( \TripleNorm{\cdot}_{\IndexSubset,\star} \) is given in
Definition~\ref{de:K_norm}.
      %
We will also use the following extension (defined for all \( \dual \in \RR^d \)
and not only for \( \dual \in \FlatRR_\IndexSubset \))
\begin{subequations}
\begin{equation}
  \CoordinateNormDual{\TripleNorm{\dual}}{\IndexSubset}=
\CoordinateNormDual{\TripleNorm{\dual_\IndexSubset}}{\IndexSubset}
\eqsepv \forall \dual \in \RR^d
\eqfinv
\label{eq:dual_coordinate_seminorm_definition}
\end{equation}
which defines a \emph{seminorm}\footnote{%
  A seminorm satisfies all the axioms of a norm, except that other vectors than
  the null vector give zero.}
on~$\RR^d$, and not a norm.
The local-coordinate-$\IndexSubset$ norm 
 \( \CoordinateNorm{\TripleNorm{\cdot}}{\IndexSubset} \) 
on~$\FlatRR_\IndexSubset$ can also be extended 
to a seminorm on~$\RR^d$ by 
\begin{equation}
  \CoordinateNorm{\TripleNorm{\primal}}{\IndexSubset} =
\CoordinateNorm{\TripleNorm{\primal_\IndexSubset}}{\IndexSubset} 
\eqsepv \forall \primal\in  \RR^d
  \eqfinp
\label{eq:coordinate_seminorm_definition}
\end{equation}
\end{subequations}
Now, to establish results in Sect.~\ref{The_Capra_conjugacy_and_the_support_mapping},
we provide properties of local-coordinate-$\IndexSubset$ and dual local-coordinate-$\IndexSubset$ norms.

\subsubsection{Properties of dual local-coordinate-$\IndexSubset$ norms
  and of local-coordinate-$\IndexSubset$ norms}
\label{Properties_of_dual_coordinate-k_norms}

\subsubsubsection{Properties of dual local-coordinate-$\IndexSubset$ norms}

\begin{subequations}
  \begin{proposition}\quad
    \label{pr:dual-coordinate-IndexSubset-properties}
    \begin{itemize}
    \item For any \( \IndexSubset \subset \IndexSet \), 
      the dual local-coordinate-$\IndexSubset$ norm satisfies 
      \begin{align}
        \CoordinateNormDual{\TripleNorm{\dual}}{\IndexSubset}
        &=
          \sup_{\IndexSubsetLess \subset \IndexSubset} 
          \sigma_{ \np{ \FlatRR_{\IndexSubsetLess} \cap \TripleNormSphere } }\np{\dual}
          = 
          \sigma_{ \np{\SuppLevelSet{\SupportMapping}{\IndexSubset} \cap \TripleNormSphere} }\np{\dual} 
          =
          \sigma_{ \np{\SuppLevelCurve{\SupportMapping}{\IndexSubset} \cap \TripleNormSphere} }\np{\dual}
          \eqsepv \forall \dual \in \FlatRR_\IndexSubset 
          \eqfinp
          \label{eq:dual_coordinate_norm}
      \end{align}
    \item We have the equality
      \begin{equation}
        \CoordinateNormDual{\TripleNorm{\dual}}{\IndexSet}
        = \TripleNormDual{\dual}
        \eqsepv \forall \dual \in \RR^d
        \eqfinp
        \label{eq:dual_coordinate_norm-d_equality}
      \end{equation}
    \item 
      The family
      \( \sequence{\CoordinateNormDual{\TripleNorm{\cdot}}{\IndexSubset}}{\IndexSubset \subset \IndexSet} \)
      of dual local-coordinate-$\IndexSubset$ norms in Definition~\ref{de:coordinate_norm}
      is nondecreasing, that is, 
      \begin{equation}
        \IndexSubsetLess \subset \IndexSubset \subset \IndexSet \implies
        \CoordinateNormDual{\TripleNorm{\dual}}{\IndexSubsetLess}
        \leq \CoordinateNormDual{\TripleNorm{\dual}}{\IndexSubset}
        \leq \CoordinateNormDual{\TripleNorm{\dual}}{\IndexSet}
        = \TripleNormDual{\dual}
            \eqsepv \forall \dual \in \FlatRR_{\IndexSubsetLess} 
        \eqfinp
        \label{eq:dual_coordinate_norm_inequalities}
      \end{equation}
    \item 
      The family
      \( \sequence{ \CoordinateNormDual{\TripleNormBall}{\IndexSubset} }{\IndexSubset \subset \IndexSet} \)
      of units balls of the dual local-coordinate-$\IndexSubset$ norms 
      in Definition~\ref{de:coordinate_norm} 
      is nonincreasing, that is, 
      \begin{equation}
      \IndexSubsetLess \subset \IndexSubset \subset \IndexSet \implies
          \CoordinateNormDual{\TripleNormBall}{\IndexSubsetLess}
\supset 
\CoordinateNormDual{\TripleNormBall}{\IndexSubset}
 \supset 
 \CoordinateNormDual{\TripleNormBall}{\IndexSet}
 =\TripleNormDualBall
          \eqfinp 
        \label{eq:dual_coordinate_norm_unit-balls_inclusions}
      \end{equation}
%
      %
    \end{itemize}
  \end{proposition}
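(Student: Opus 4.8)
The plan is to unfold the definition~\eqref{eq:dual_coordinate_norm_definition} of the dual local-coordinate-$\IndexSubset$ norm, rewrite each $\SetStar{\IndexSubsetLess}$-norm as a support function via~\eqref{eq:K_star=sigma}, and then recognize the union of the sets involved as a level set of the support mapping.

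First, I would fix a nonempty $\IndexSubset \subset \IndexSet$ (the case $\IndexSubset=\emptyset$ being covered by the conventions $\CoordinateNormDual{\TripleNorm{\cdot}}{\emptyset}=0$) and $\dual \in \FlatRR_\IndexSubset$. For each $\IndexSubsetLess \subset \IndexSubset$, equation~\eqref{eq:K_star=sigma} gives $\TripleNorm{\dual_{\IndexSubsetLess}}_{\IndexSubsetLess,\star}=\sigma_{\FlatRR_{\IndexSubsetLess}\cap\TripleNormSphere}\np{\dual_{\IndexSubsetLess}}$; since $\FlatRR_{\IndexSubsetLess}\cap\TripleNormSphere\subset\FlatRR_{\IndexSubsetLess}$, the self-duality~\eqref{eq:orthogonal_projection_self-dual} of the orthogonal projection allows replacing $\dual_{\IndexSubsetLess}$ by $\dual$ inside the support function, so $\TripleNorm{\dual_{\IndexSubsetLess}}_{\IndexSubsetLess,\star}=\sigma_{\FlatRR_{\IndexSubsetLess}\cap\TripleNormSphere}\np{\dual}$. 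Taking the supremum over $\IndexSubsetLess\subset\IndexSubset$, using that the support function of a union is the pointwise supremum of the support functions, and noting $\bigcup_{\IndexSubsetLess\subset\IndexSubset}\FlatRR_{\IndexSubsetLess}=\FlatRR_\IndexSubset=\SuppLevelSet{\SupportMapping}{\IndexSubset}$ by~\eqref{eq:support_mapping_level_set_FlatRR}, I obtain the first two equalities in~\eqref{eq:dual_coordinate_norm}.

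The third equality $\sigma_{\SuppLevelSet{\SupportMapping}{\IndexSubset}\cap\TripleNormSphere}=\sigma_{\SuppLevelCurve{\SupportMapping}{\IndexSubset}\cap\TripleNormSphere}$ is the one step that needs a genuine argument, and it is the part I expect to be the main (if mild) obstacle: the inclusion $\SuppLevelCurve{\SupportMapping}{\IndexSubset}\cap\TripleNormSphere\subset\SuppLevelSet{\SupportMapping}{\IndexSubset}\cap\TripleNormSphere$ gives one inequality for free, but the reverse requires observing that the vectors with support exactly $\IndexSubset$ are dense in $\FlatRR_\IndexSubset$. Concretely, given $\primal\in\FlatRR_\IndexSubset\cap\TripleNormSphere$ with $\Support{\primal}\subsetneq\IndexSubset$, I would perturb $\primal$ by a vanishing vector supported on $\IndexSubset\setminus\Support{\primal}$ and renormalize by $\TripleNorm{\cdot}$; the resulting sequence lies in $\SuppLevelCurve{\SupportMapping}{\IndexSubset}\cap\TripleNormSphere$ and converges to $\primal$ by continuity of the norm. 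Hence $\SuppLevelSet{\SupportMapping}{\IndexSubset}\cap\TripleNormSphere$ is contained in the closure of $\SuppLevelCurve{\SupportMapping}{\IndexSubset}\cap\TripleNormSphere$, and since a support function is unchanged when its underlying set is replaced by its closure (by continuity of the scalar product), the two support functions coincide. This proves~\eqref{eq:dual_coordinate_norm}.

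The remaining items should then follow at once. Equation~\eqref{eq:dual_coordinate_norm-d_equality} is~\eqref{eq:dual_coordinate_norm} specialized to $\IndexSubset=\IndexSet$, where $\SuppLevelSet{\SupportMapping}{\IndexSet}=\RR^d$ and $\sigma_{\TripleNormSphere}=\TripleNormDual{\cdot}$ by~\eqref{eq:norm_dual_norm}. For the monotonicity~\eqref{eq:dual_coordinate_norm_inequalities}, I would use the supremum-over-subsets form (the definition~\eqref{eq:dual_coordinate_norm_definition}, or the first equality in~\eqref{eq:dual_coordinate_norm}): enlarging $\IndexSubset$ only enlarges the family of indices over which the supremum runs, so $\CoordinateNormDual{\TripleNorm{\dual}}{\IndexSubsetLess}\leq\CoordinateNormDual{\TripleNorm{\dual}}{\IndexSubset}$ whenever $\IndexSubsetLess\subset\IndexSubset$, and combining with~\eqref{eq:dual_coordinate_norm-d_equality} yields the full chain; by the seminorm extension~\eqref{eq:dual_coordinate_seminorm_definition} the same reasoning holds verbatim for all $\dual\in\RR^d$. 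Finally, the unit-ball inclusions~\eqref{eq:dual_coordinate_norm_unit-balls_inclusions} are the order-reversed restatement of~\eqref{eq:dual_coordinate_norm_inequalities} at the level of the $\leq 1$ sublevel sets, together with $\CoordinateNormDual{\TripleNormBall}{\IndexSet}=\TripleNormDualBall$, which is again~\eqref{eq:dual_coordinate_norm-d_equality}.
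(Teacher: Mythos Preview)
Your proposal is correct and follows essentially the same approach as the paper: rewrite each $\SetStar{\IndexSubsetLess}$-norm via~\eqref{eq:K_star=sigma}, take the union to reach the level-set support function, and then use a perturbation-plus-renormalization density argument to pass from the level set to the level curve (the paper separates this last step into first proving $\overline{\SuppLevelCurve{\SupportMapping}{\IndexSubset}}=\SuppLevelSet{\SupportMapping}{\IndexSubset}$ and then intersecting with~$\TripleNormSphere$, but your one-shot version is equivalent). The remaining items are handled identically in both, via specialization to $\IndexSubset=\IndexSet$, monotonicity of the supremum in~\eqref{eq:dual_coordinate_norm_definition}, and the order-reversal between norms and unit balls.
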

\end{subequations}

\begin{proof} \quad
  
  \noindent $\bullet$ For any \( \dual \in \RR^d \), we have
  \begin{align*}
    \CoordinateNormDual{\TripleNorm{\dual}}{\IndexSubset}
    &=
      \sup_{\IndexSubsetLess \subset \IndexSubset} \TripleNorm{\dual_{\IndexSubsetLess}}_{\IndexSubsetLess,\star} 
      \tag{by definition~\eqref{eq:dual_coordinate_norm_definition}}
    \\
    &=
      \sup_{\IndexSubsetLess \subset \IndexSubset}
      \sigma_{ \np{ \FlatRR_{\IndexSubsetLess} \cap \TripleNormSphere } }\np{\dual}
      \tag{by~\eqref{eq:K_star=sigma}}
    \\
    &=
      \sigma_{ \bigcup_{\IndexSubsetLess \subset \IndexSubset}
      \np{ \FlatRR_{\IndexSubsetLess} \cap \TripleNormSphere } }\np{\dual}
    \\
    &=
      \sigma_{ \np{\SuppLevelSet{\SupportMapping}{\IndexSubset} \cap \TripleNormSphere} } \np{\dual}\eqfinp
      \tag{as we have \( \SuppLevelSet{\SupportMapping}{\IndexSubset} \cap \TripleNormSphere 
      = \bigcup_{ \IndexSubsetLess \subset \IndexSubset} \np{ \FlatRR_{\IndexSubsetLess} \cap \TripleNormSphere } \)
      by~\eqref{eq:support_mapping_level_set_FlatRR}}
  \end{align*}
  
  To finish, we will now prove that 
  \( \sigma_{ \SuppLevelSet{\SupportMapping}{\IndexSubset} \cap \TripleNormSphere } =
  \sigma_{ \LevelCurve{\SupportMapping}{\IndexSubset} \cap \TripleNormSphere } \).
  For this purpose, we show in two steps that 
  \(      \SuppLevelSet{\SupportMapping}{\IndexSubset} \cap \TripleNormSphere 
  =
  \overline{ \LevelCurve{\SupportMapping}{\IndexSubset} \cap \TripleNormSphere } \).
  
  First, we establish that 
  \( \overline{ \LevelCurve{\SupportMapping}{\IndexSubset} } 
= \SuppLevelSet{\SupportMapping}{\IndexSubset} \). 
  The inclusion \( \overline{ \LevelCurve{\SupportMapping}{\IndexSubset} } 
  \subset \SuppLevelSet{\SupportMapping}{\IndexSubset} \) is easy.
  Indeed, that the level set \( \SuppLevelSet{\SupportMapping}{\IndexSubset} \) is closed is straightforward:
  if a sequence $\np{\primal_n}_{n \in \NN} \in
  \SuppLevelSet{\SupportMapping}{\IndexSubset}$ is converging towards $\primal$,
  we get that $(\primal)_{V\backslash \IndexSubset}=0$ 
  since $(\primal_n)_{V\backslash \IndexSubset}=0$ for all $n\in \NN$;
  thus, we obtain that $\primal\in \SuppLevelSet{\SupportMapping}{\IndexSubset}$.
  There remains to prove the reverse inclusion
  \( \SuppLevelSet{\SupportMapping}{\IndexSubset} \subset \overline{ \LevelCurve{\SupportMapping}{\IndexSubset} } \).
  For this purpose, we consider 
  \( \primal \in \SuppLevelSet{\SupportMapping}{\IndexSubset} \). 
  If \( \primal \in \LevelCurve{\SupportMapping}{\IndexSubset} \), obviously 
  \( \primal \in \overline{ \LevelCurve{\SupportMapping}{\IndexSubset} } \).
  Therefore, we suppose that \( \Support{\primal}=L \subsetneq \IndexSubset\) and
  we consider the sequence $\np{\primal_n}_{n \in \NN^*}$ defined by
  $\primal_n = \primal + (1/n)\1_{\IndexSubset\backslash L}$, where $\1$ is the
  vector of~$\RR^{d}$ made of ones. We have that
  $\Support{\primal_n}=\IndexSubset$ for all $n\in \NN$ and $\primal_n \to \primal$ when $n$ goes to infinity, thus
  $\primal\in \overline{ \LevelCurve{\SupportMapping}{\IndexSubset} }$.
  This proves that 
  \( \SuppLevelSet{\SupportMapping}{\IndexSubset} \subset \overline{ \LevelCurve{\SupportMapping}{\IndexSubset} } \).

  Second, we prove that \( \SuppLevelSet{\SupportMapping}{\IndexSubset} \cap \TripleNormSphere 
  = \overline{ \LevelCurve{\SupportMapping}{\IndexSubset} \cap \TripleNormSphere } \).
  The inclusion 
  \( \overline{ \LevelCurve{\SupportMapping}{\IndexSubset} \cap \TripleNormSphere } 
  \subset \SuppLevelSet{\SupportMapping}{\IndexSubset} \cap \TripleNormSphere \) is easy.
  Indeed, we have 
  \( \overline{ \LevelCurve{\SupportMapping}{\IndexSubset} \cap \TripleNormSphere } \subset 
  \overline{ \LevelCurve{\SupportMapping}{\IndexSubset} } \cap \overline{ \TripleNormSphere } = 
  \SuppLevelSet{\SupportMapping}{\IndexSubset} \cap \TripleNormSphere \)
since we have just proved that \( \overline{ \LevelCurve{\SupportMapping}{\IndexSubset} } 
= \SuppLevelSet{\SupportMapping}{\IndexSubset} \).
  To prove the reverse inclusion
  \( \SuppLevelSet{\SupportMapping}{\IndexSubset} \cap \TripleNormSphere 
  \subset \overline{ \LevelCurve{\SupportMapping}{\IndexSubset} \cap \TripleNormSphere } \),
  we consider \( \primal \in \SuppLevelSet{\SupportMapping}{\IndexSubset} \cap \TripleNormSphere \).
  As we have just seen that \( \SuppLevelSet{\SupportMapping}{\IndexSubset} = 
  \overline{ \LevelCurve{\SupportMapping}{\IndexSubset} }\), 
  we deduce that \( \primal \in \overline{ \LevelCurve{\SupportMapping}{\IndexSubset} }\).
  Therefore, there exists a sequence
  \( \sequence{z_n}{n\in\NN} \) in \( \LevelCurve{\SupportMapping}{\IndexSubset} \)
  such that \( z_n \to \primal \) when \( n \to +\infty \).
  Since \( \primal \in \TripleNormSphere \), we can always suppose that 
  \( z_n \neq 0 \), for all $n\in\NN$. Therefore \( z_n/\TripleNorm{z_n} \) is well
  defined and, when \( n \to +\infty \), 
  we have \( z_n/\TripleNorm{z_n} \to \primal/\TripleNorm{\primal}=\primal \)
  since \( \primal \in \TripleNormSphere = \defset{\primal \in \PRIMAL}{\TripleNorm{\primal} = 1} \).
  Now, on the one hand, 
  \( z_n/\TripleNorm{z_n} \in \LevelCurve{\SupportMapping}{\IndexSubset} \), for all $n\in\NN$,
  and, on the other hand, \( z_n/\TripleNorm{z_n} \in \TripleNormSphere \).
  As a consequence \( z_n/\TripleNorm{z_n} \in \LevelCurve{\SupportMapping}{\IndexSubset} \cap \TripleNormSphere \),
  and we conclude that \( \primal \in 
  \overline{ \LevelCurve{\SupportMapping}{\IndexSubset} \cap \TripleNormSphere } \). 
  Thus, we have proved that 
  \( \SuppLevelSet{\SupportMapping}{\IndexSubset} \cap \TripleNormSphere 
  \subset \overline{ \LevelCurve{\SupportMapping}{\IndexSubset} \cap \TripleNormSphere } \).
  \medskip

  From \( \SuppLevelSet{\SupportMapping}{\IndexSubset} \cap \TripleNormSphere 
  = \overline{ \LevelCurve{\SupportMapping}{\IndexSubset} \cap \TripleNormSphere } \), we get that 
  \( \sigma_{ \SuppLevelSet{\SupportMapping}{\IndexSubset} \cap \TripleNormSphere } =
  \sigma_{ \overline{ \LevelCurve{\SupportMapping}{\IndexSubset} \cap \TripleNormSphere } } = 
  \sigma_{ \LevelCurve{\SupportMapping}{\IndexSubset} \cap \TripleNormSphere } \).
  \smallskip

  Thus, we have proved all equalities in~\eqref{eq:dual_coordinate_norm}.
  \medskip 

  \noindent $\bullet$ 
  By the equality $\CoordinateNormDual{\TripleNorm{\dual}}{\IndexSubset} =
  \sigma_{ \np{\SuppLevelSet{\SupportMapping}{\IndexSubset} \cap \TripleNormSphere}} \np{\dual}$
  in~\eqref{eq:dual_coordinate_norm}, we get that, for all $\dual \in \RR^d$,
  $\CoordinateNormDual{\TripleNorm{\dual}}{\IndexSet} = 
  \sigma_{ \np{\SuppLevelSet{\SupportMapping}{\IndexSet} \cap \TripleNormSphere}} \np{\dual}
  =\sigma_{ \TripleNormSphere} \np{\dual} =\TripleNormDual{\dual}$, 
  since $\SuppLevelSet{\SupportMapping}{\IndexSet} = \RR^d$ and 
  by~\eqref{eq:norm_dual_norm}.
  Thus, we have proved the equality~\eqref{eq:dual_coordinate_norm-d_equality}.
  \medskip 

  \noindent $\bullet$ 
  The inequalities in~\eqref{eq:dual_coordinate_norm_inequalities}
  easily derive from the very definition~\eqref{eq:dual_coordinate_norm_definition}
  of the dual local-coordinate-$\IndexSubset$ norms~\( \CoordinateNormDual{\TripleNorm{\cdot}}{\IndexSubset}
  \).
  \medskip 

  \noindent $\bullet$
  The inclusions and equality 
  in~\eqref{eq:dual_coordinate_norm_unit-balls_inclusions}
  directly follow from the equality and the inequalities between norms
  in~\eqref{eq:dual_coordinate_norm-d_equality} and~\eqref{eq:dual_coordinate_norm_inequalities}.
  \medskip 

  This ends the proof. 
\end{proof}

\subsubsubsection{Properties of local-coordinate-$\IndexSubset$ norms}

\begin{subequations}
  \begin{proposition}\quad
    \begin{itemize}
    \item 
      For any subset \( \IndexSubset \subset \IndexSet \), the local-coordinate-$\IndexSubset$ norm
      \( \CoordinateNorm{\TripleNorm{\cdot}}{\IndexSubset} \) 
      has unit ball (in the subspace $\FlatRR_\IndexSubset$ in~\eqref{eq:FlatRR}) 
      \begin{equation}
        \CoordinateNorm{\TripleNormBall}{\IndexSubset}
        {=
          \closedconvexhull \np{ \FlatRR_{\IndexSubset} \cap \TripleNormSphere } 
        }
        \subset \FlatRR_\IndexSubset
        \eqfinv
        \label{eq:coordinate_norm_unit_ball_property}
      \end{equation}
      where \( \closedconvexhull\np{S} \) denotes the closed convex hull
      of a subset \( S \subset \RR^d \).
    \item 
      We have the equality
      \begin{equation}
        \CoordinateNorm{\TripleNorm{\primal}}{\IndexSet}
        =
        \TripleNorm{\primal}
        \eqsepv \forall \primal \in \RR^d 
        \eqfinp  
        \label{eq:coordinate_norm-d_equality}
      \end{equation}
    \item 
      The family 
      \( \sequence{\CoordinateNorm{\TripleNorm{\cdot}}{\IndexSubset}}{\IndexSubset \subset \IndexSet} \)
      of local-coordinate-$\IndexSubset$ norms in Definition~\ref{de:coordinate_norm} is nonincreasing, that is,
      \begin{equation}
        \IndexSubsetLess \subset \IndexSubset \subset \IndexSet \implies
        \CoordinateNorm{\TripleNorm{\primal}}{\IndexSubsetLess}
        \geq
        \CoordinateNorm{\TripleNorm{\primal}}{\IndexSubset}
      \geq
      \CoordinateNorm{\TripleNorm{\primal}}{\IndexSet}
      = \TripleNorm{\primal} 
        \eqsepv \forall \primal \in \FlatRR_\IndexSubsetLess
        \eqfinp
        \label{eq:coordinate_norm_inequalities}
      \end{equation}
    \item 
      The family 
      \( \sequence{ \CoordinateNorm{\TripleNormBall}{\IndexSubset} }{\IndexSubset \subset  \IndexSet} \)
      of unit balls of the local-coordinate-$\IndexSubset$ norms 
      in~\eqref{eq:coordinate_norm_unit_ball} is nondecreasing, that is,
      \begin{equation}
       \IndexSubsetLess \subset \IndexSubset \subset \IndexSet \implies
        \CoordinateNorm{\TripleNormBall}{\IndexSubsetLess} 
        \subset 
        \CoordinateNorm{\TripleNormBall}{\IndexSubset} 
        \subset \CoordinateNorm{\TripleNormBall}{\IndexSet} 
        = \TripleNormBall 
       \eqfinp 
        \label{eq:coordinate_norm_unit-balls_inclusions}
      \end{equation}
    \item 
      We have the implication
      \begin{equation}
        \primal \in \FlatRR_{\IndexSubset}
        \implies 
        \TripleNorm{\primal}=
        \CoordinateNorm{\TripleNorm{\primal}}{\IndexSubset} 
        \eqfinp
        \label{eq:coordinate_norm_graded_b}
      \end{equation}
    \end{itemize}
  \end{proposition}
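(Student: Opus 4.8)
The plan is to dispatch first the boundary case \( \IndexSubset=\emptyset \), where every assertion is immediate from the stated conventions (in particular \( \CoordinateNorm{\TripleNormBall}{\emptyset}=\{0\} \) and \( \CoordinateNormDual{\TripleNorm{\cdot}}{\emptyset}=0 \)), and then to fix a nonempty subset \( \IndexSubset\subset\IndexSet \) and argue inside the finite-dimensional subspace \( \FlatRR_{\IndexSubset} \) carrying the restriction of the scalar product. The whole proposition hinges on \eqref{eq:coordinate_norm_graded_b} — that the local-coordinate-\( \IndexSubset \) norm agrees on \( \FlatRR_{\IndexSubset} \) with the source norm \( \TripleNorm{\cdot} \) — so I would establish that first, and then read off \eqref{eq:coordinate_norm_unit_ball_property}, \eqref{eq:coordinate_norm-d_equality}, \eqref{eq:coordinate_norm_inequalities} and \eqref{eq:coordinate_norm_unit-balls_inclusions} as short consequences.

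For \eqref{eq:coordinate_norm_graded_b}: by the expression \eqref{eq:dual_coordinate_norm} of the dual local-coordinate-\( \IndexSubset \) norm, together with \( \SuppLevelSet{\SupportMapping}{\IndexSubset}=\FlatRR_{\IndexSubset} \) from \eqref{eq:support_mapping_level_set_FlatRR}, one has \( \CoordinateNormDual{\TripleNorm{\dual}}{\IndexSubset}=\sigma_{\FlatRR_{\IndexSubset}\cap\TripleNormSphere}(\dual) \) for every \( \dual\in\FlatRR_{\IndexSubset} \); by \eqref{eq:K_star=sigma} this quantity is exactly \( \TripleNorm{\dual}_{\IndexSubset,\star} \), the value at \( \dual \) of the dual (taken inside \( \FlatRR_{\IndexSubset} \)) of the restriction norm \( \TripleNorm{\cdot}_{\IndexSubset} \) of Definition~\ref{de:K_norm}. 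Hence, on \( \FlatRR_{\IndexSubset} \), the dual local-coordinate-\( \IndexSubset \) norm coincides with \( \bp{\TripleNorm{\cdot}_{\IndexSubset}}_{\star} \); applying the finite-dimensional biduality of norms and Definition~\ref{de:coordinate_norm} yields \( \CoordinateNorm{\TripleNorm{\cdot}}{\IndexSubset}=\bp{\bp{\TripleNorm{\cdot}_{\IndexSubset}}_{\star}}_{\star}=\TripleNorm{\cdot}_{\IndexSubset} \) on \( \FlatRR_{\IndexSubset} \), which is \eqref{eq:coordinate_norm_graded_b}. Specializing to \( \IndexSubset=\IndexSet \) (so \( \FlatRR_{\IndexSet}=\RR^d \)) gives \eqref{eq:coordinate_norm-d_equality}.

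For \eqref{eq:coordinate_norm_unit_ball_property}: by \eqref{eq:coordinate_norm_graded_b} the unit ball \( \CoordinateNorm{\TripleNormBall}{\IndexSubset} \) of \eqref{eq:coordinate_norm_unit_ball} equals the unit ball \( \FlatRR_{\IndexSubset}\cap\TripleNormBall \) of the restriction norm, so it suffices to prove the geometric equality \( \FlatRR_{\IndexSubset}\cap\TripleNormBall=\closedconvexhull \np{ \FlatRR_{\IndexSubset}\cap\TripleNormSphere } \). The inclusion ``\( \supset \)'' is clear since the left-hand side is closed, convex and contains \( \FlatRR_{\IndexSubset}\cap\TripleNormSphere \); conversely, any \( \primal\in\FlatRR_{\IndexSubset}\cap\TripleNormBall \) is a convex combination, with the nonnegative weights \( (1\pm\TripleNorm{\primal})/2 \) summing to \( 1 \), of the two antipodal points \( \pm\,\primal/\TripleNorm{\primal} \) of \( \FlatRR_{\IndexSubset}\cap\TripleNormSphere \) (and for \( \primal=0 \) any such antipodal pair works, the set being nonempty because \( \IndexSubset\neq\emptyset \)). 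Then \eqref{eq:coordinate_norm_unit-balls_inclusions} follows at once: \( \IndexSubsetLess\subset\IndexSubset \) gives \( \FlatRR_{\IndexSubsetLess}\cap\TripleNormSphere\subset\FlatRR_{\IndexSubset}\cap\TripleNormSphere \), hence the closed convex hulls are nested, with \( \closedconvexhull\np{\TripleNormSphere}=\TripleNormBall \) at the top by \eqref{eq:coordinate_norm-d_equality}. Finally \eqref{eq:coordinate_norm_inequalities} comes from applying \eqref{eq:coordinate_norm_graded_b} to \( \IndexSubsetLess \), \( \IndexSubset \) and \( \IndexSet \) at once: a vector \( \primal\in\FlatRR_{\IndexSubsetLess} \) lies in all three subspaces, so \( \CoordinateNorm{\TripleNorm{\primal}}{\IndexSubsetLess}=\CoordinateNorm{\TripleNorm{\primal}}{\IndexSubset}=\CoordinateNorm{\TripleNorm{\primal}}{\IndexSet}=\TripleNorm{\primal} \), which in particular delivers the chain of (non-strict) inequalities.

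The arguments are all elementary; the only things to be careful about are the boundary conventions at \( \IndexSubset=\emptyset \) and, in each passage between a norm and its dual, keeping track of whether the dual or polar is formed inside the subspace \( \FlatRR_{\IndexSubset} \) or inside \( \RR^d \) — the self-duality \eqref{eq:orthogonal_projection_self-dual} of the orthogonal projection \( \pi_{\IndexSubset} \) is what makes these passages harmless. There is no substantive obstacle: the entire content is the geometric observation that the closed convex hull of the source-norm sphere inside a coordinate subspace is the corresponding source-norm ball.
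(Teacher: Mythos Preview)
Your proof is correct and takes a genuinely different route from the paper's. You identify the core fact up front: on \( \FlatRR_{\IndexSubset} \) the dual local-coordinate-\( \IndexSubset \) norm coincides with \( \TripleNorm{\cdot}_{\IndexSubset,\star} \) (via \eqref{eq:dual_coordinate_norm} and \eqref{eq:K_star=sigma}), so by biduality the coordinate-\( \IndexSubset \) norm is the restriction norm \( \TripleNorm{\cdot}_{\IndexSubset} \). This is \eqref{eq:coordinate_norm_graded_b}, and you then read off all the remaining items, in particular obtaining \eqref{eq:coordinate_norm_unit_ball_property} from the elementary antipodal argument that \( \FlatRR_{\IndexSubset}\cap\TripleNormBall=\closedconvexhull(\FlatRR_{\IndexSubset}\cap\TripleNormSphere) \). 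The paper instead proves \eqref{eq:coordinate_norm_unit_ball_property} first (by polarity from the support-function identity), derives the ball inclusions \eqref{eq:coordinate_norm_unit-balls_inclusions} by taking polars of \eqref{eq:dual_coordinate_norm_unit-balls_inclusions}, obtains the norm inequalities \eqref{eq:coordinate_norm_inequalities} from those inclusions, and only then establishes \eqref{eq:coordinate_norm_graded_b} through a longer level-set argument. Your ordering is more economical and yields a sharper observation as a by-product: the chain of inequalities in \eqref{eq:coordinate_norm_inequalities} is in fact a chain of equalities on \( \FlatRR_{\IndexSubsetLess} \), something the paper's ball-inclusion route does not make visible.
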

\end{subequations}

\begin{proof} \quad

  \noindent $\bullet$ 
  For any $\dual \in \FlatRR_\IndexSubset$, we have
  \begin{align*}
    \CoordinateNormDual{\TripleNorm{\dual}}{\IndexSubset}
    &=
      \sup_{\IndexSubsetLess \subset \IndexSubset}
      \sigma_{ \np{ \FlatRR_{\IndexSubsetLess} \cap \TripleNormSphere } }\np{\dual}
      \tag{by~\eqref{eq:K_star=sigma}}
    \\
    &=
      \sigma_{ \np{ \FlatRR_{\IndexSubset} \cap \TripleNormSphere } }\np{\dual}
      \intertext{since $\FlatRR_{\IndexSubsetLess} \subset \FlatRR_{\IndexSubset}$ 
for $\IndexSubsetLess\subset \IndexSubset$ and by
      definition~\eqref{eq:support_function}
of the support functions \( \sigma_{ \np{ \FlatRR_{\IndexSubsetLess} \cap \TripleNormSphere } }\)}
    &=
      \sigma_{ \closedconvexhull{ 
      \np{ \FlatRR_{\IndexSubset} \cap \TripleNormSphere } } }\np{\dual}
      \tag{by \cite[Prop.~7.13]{Bauschke-Combettes:2017} }
  \end{align*}
  As  \( \closedconvexhull\bp{ \bigcup_{\IndexSubsetLess \subset \IndexSubset} 
    \np{ \FlatRR_{\IndexSubsetLess} \cap \TripleNormSphere } } \) is a
  closed convex subset of~$\FlatRR_\IndexSubset$, we conclude that 
  \( \CoordinateNorm{\TripleNormBall}{\IndexSubset} =
  \closedconvexhull\bp{ \bigcup_{\IndexSubsetLess \subset \IndexSubset} \np{ \FlatRR_{\IndexSubsetLess} \cap \TripleNormSphere } } \) 
  by~\eqref{eq:norm_dual_norm}.
  Thus, we have proven~\eqref{eq:coordinate_norm_unit_ball_property}.
  \medskip

  \noindent $\bullet$ 
  From the equality~\eqref{eq:dual_coordinate_norm-d_equality},
  we deduce the equality~\eqref{eq:coordinate_norm-d_equality} between the dual norms
  by definition of the dual norm. 
  \medskip
  
  \noindent $\bullet$ 
  The equality and inequalities between norms in~\eqref{eq:coordinate_norm_inequalities}
  easily derive from the inclusions and equality between unit balls 
  in~\eqref{eq:coordinate_norm_unit-balls_inclusions}.
  \medskip

  \noindent $\bullet$ 
  The inclusions and equality between unit balls 
  in~\eqref{eq:coordinate_norm_unit-balls_inclusions}
  directly follow from the inclusions and equality between unit balls 
  in~\eqref{eq:dual_coordinate_norm_unit-balls_inclusions}
  and as the unit ball of the dual norm is 
  \( \CoordinateNorm{\TripleNormBall}{\IndexSubset} =
  \bp{  \CoordinateNormDual{\TripleNormBall}{\IndexSubset} }^{\odot} \),
  the polar set of~\(  \CoordinateNormDual{\TripleNormBall}{\IndexSubset} \).
  \medskip
  
  \noindent $\bullet$ 
  We prove~\eqref{eq:coordinate_norm_graded_b} using the fact that
  $\primal \in \FlatRR_{\IndexSubset}\Leftrightarrow \primal \in \SuppLevelSet{\SupportMapping}{\IndexSubset}$.
  For any \( \primal \in \RR^d \) and
  for any \( \IndexSubset \subset \IndexSet \), we have\footnote{%
    In what follows, by ``or'', we mean the so-called \emph{exclusive or}
    (exclusive disjunction). Thus, every ``or'' should be understood as ``or
    $\dual\not=0$ and''.
    \label{ft:exclusive_or}} 
  \begin{align*}
    \primal \in \SuppLevelSet{\SupportMapping}{\IndexSubset} 
    &\Leftrightarrow 
      \primal=0 \text{ or }
      \frac{\primal}{\TripleNorm{\primal}} \in\SuppLevelSet{\SupportMapping}{\IndexSubset}
      \intertext{by 0-homogeneity~\eqref{eq:support_mapping_is_0-homogeneous}
      of the $\SupportMapping$ mapping, and
      by definition~\eqref{eq:support_mapping_level_set}
      of $\SuppLevelSet{\SupportMapping}{\IndexSubset}$}
    &\Leftrightarrow 
      \primal=0 \text{ or }
      \frac{\primal}{\TripleNorm{\primal}} \in 
      \SuppLevelSet{\SupportMapping}{\IndexSubset} \cap \TripleNormSphere 
      \tag{as \( \frac{\primal}{\TripleNorm{\primal}} \in \TripleNormSphere \)}
    \\
    &\Leftrightarrow 
      \primal=0 \text{ or }
      \frac{\primal}{\TripleNorm{\primal}} \in 
      \bigcup_{ {\IndexSubsetLess \subset \IndexSubset}} \np{ \FlatRR_{\IndexSubsetLess} \cap \TripleNormSphere }
      \tag{as \( \SuppLevelSet{\SupportMapping}{\IndexSubset}
      = \bigcup_{ \IndexSubsetLess \subset \IndexSubset } \FlatRR_{\IndexSubsetLess}
      \) by~\eqref{eq:support_mapping_level_set_FlatRR}}
    \\
    &\implies
      \primal=0 \text{ or }
      \frac{\primal}{\TripleNorm{\primal}} \in 
      \CoordinateNorm{\TripleNormBall}{\IndexSubset} 
      \tag{as \( \CoordinateNorm{\TripleNormBall}{\IndexSubset}=
      \closedconvexhull\bp{ \bigcup_{ \IndexSubsetLess \subset \IndexSubset} 
      \np{ \FlatRR_{\IndexSubsetLess} \cap \TripleNormSphere } } \) by~\eqref{eq:coordinate_norm_unit_ball_property}}
    \\
    &\implies
      \primal=0 \text{ or }
      \CoordinateNorm{\TripleNorm{\frac{\primal}{\TripleNorm{\primal}}}}{\IndexSubset}
      \leq 1 
      \intertext{since \( \CoordinateNorm{\TripleNormBall}{\IndexSubset} \) is the unit ball
      of the norm \( \CoordinateNorm{\TripleNorm{\cdot}}{\IndexSubset} \) 
      by~\eqref{eq:coordinate_norm_unit_ball} } 
    &\implies
      \CoordinateNorm{\TripleNorm{\primal}}{\IndexSubset}
      \leq \TripleNorm{\primal}
    \\
    &\implies
      \CoordinateNorm{\TripleNorm{\primal}}{\IndexSubset}
      \leq \TripleNorm{\primal}
      =\CoordinateNorm{\TripleNorm{\primal}}{\IndexSet}
      \tag{where the last equality comes
      from~\eqref{eq:coordinate_norm_inequalities} } 
    \\
    &\implies
      \CoordinateNorm{\TripleNorm{\primal}}{\IndexSubset}
      = \CoordinateNorm{\TripleNorm{\primal}}{\IndexSet}
      \tag{as \( \CoordinateNorm{\TripleNorm{\primal}}{\IndexSet}
\leq \CoordinateNorm{\TripleNorm{\primal}}{\IndexSubset}  \) by~\eqref{eq:coordinate_norm_inequalities}} 
      \eqfinp
  \end{align*}
  Therefore, we have obtained~\eqref{eq:coordinate_norm_graded_b}.
\smallskip

  This ends the proof.
\end{proof}

\subsection{Generalized local-top-$\IndexSubset$ and local-$\IndexSubset$-support dual~norms}
\label{Definitions_of_generalized_top-k_and_k-support_dual_norms}

After the definitions of 
generalized local-top-$\IndexSubset$ and local-$\IndexSubset$-support dual~norms
in~\S\ref{Definition_of_generalized_local-top-IndexSubset_and_local-IndexSubset-support_dual_norms_bis},
we study their properties in~\S\ref{se:properties-of-local-coordinate-K}.
The key Proposition~\ref{pr:properties-of-dual-ccord-IndexSubset-under-orth-mon}
is used in the proof of Proposition~\ref{pr:nonempty_subdifferential}

\subsubsection{Definition of generalized local-top-$\IndexSubset$ and
  local-$\IndexSubset$-support dual~norms}
\label{Definition_of_generalized_local-top-IndexSubset_and_local-IndexSubset-support_dual_norms_bis}

\begin{definition}
  \label{de:top_norm}
  For \( \IndexSubset \subset \IndexSet \), 
we call \emph{generalized local-top-$\IndexSubset$ norm}
(associated with the source norm~$\TripleNorm{\cdot}$)
  the norm (on the subspace $\FlatRR_\IndexSubset$ in~\eqref{eq:FlatRR}) defined by 
  \begin{equation}
    \TopNorm{\TripleNorm{\primal}}{\IndexSubset}
    =
    \sup_{\IndexSubsetLess \subset \IndexSubset} \TripleNorm{\primal_\IndexSubsetLess} 
    =
    \sup_{\IndexSubsetLess \subset \IndexSubset} \TripleNorm{\primal_\IndexSubsetLess}_\IndexSubsetLess 
    \eqsepv \forall \primal \in \FlatRR_\IndexSubset 
    \eqfinp
    \label{eq:top_norm}
  \end{equation}
  We call \emph{generalized local-$\IndexSubset$-support norm}
(associated with the source norm~$\TripleNorm{\cdot}$)
  the dual norm (on the subspace $\FlatRR_\IndexSubset$ in~\eqref{eq:FlatRR})
  of the generalized local-top-$\IndexSubset$ norm, denoted by\footnote{%
    We use the symbol~$\star$ in the superscript to indicate that the generalized
    local-$\IndexSubset$-support norm  \( \SupportNorm{\TripleNorm{\cdot}}{K} \) is a dual norm.
  }
  \( \SupportNorm{\TripleNorm{\cdot}}{\IndexSubset} \):
  \begin{equation}
    \SupportNorm{\TripleNorm{\cdot}}{\IndexSubset} 
    = \bp{ \TopNorm{\TripleNorm{\cdot}}{\IndexSubset} }_{\star}
    \eqfinp
    \label{eq:support_norm}
  \end{equation}
\end{definition}

Now, we do the same but with the dual norm~$\TripleNormDual{\cdot}$ in lieu of the 
source norm~$\TripleNorm{\cdot}$. For the sake of completeness, we duplicate the
Definition~\ref{de:top_dual_norm} here.

For any subset \( \IndexSubset \subset \IndexSet \), 
we call \emph{generalized local-top-$\IndexSubset$ dual~norm} 
(on the subspace $\FlatRR_\IndexSubset$ in~\eqref{eq:FlatRR}) 
the local norm defined by 
\begin{equation}
  \TopDualNorm{\TripleNorm{\dual}}{\IndexSubset}
  =
  \sup_{\IndexSubsetLess \subset \IndexSubset} \TripleNormDual{\dual_{\IndexSubsetLess}} 
  =
\sup_{\IndexSubsetLess \subset \IndexSubset} 
  \TripleNorm{\dual_{\IndexSubsetLess}}_{\star,\IndexSubsetLess}
  \eqsepv \forall \dual \in \FlatRR_\IndexSubset 
  \eqfinp
\end{equation}
We call \emph{generalized local-$\IndexSubset$-support dual norm}
the dual norm (on the subspace $\FlatRR_\IndexSubset$)
of the generalized local-top-$\IndexSubset$ dual~norm, denoted by\footnote{%
  We use the symbol~$\star$ in the superscript to indicate that the generalized
  local-$\IndexSubset$-support dual~norm \( \SupportDualNorm{\TripleNorm{\cdot}}{\IndexSubset} \)
  is a dual norm.}
\( \SupportDualNorm{\TripleNorm{\cdot}}{\IndexSubset} \): 
\begin{equation}
  \SupportDualNorm{\TripleNorm{\cdot}}{\IndexSubset} 
  = \bp{ \TopDualNorm{\TripleNorm{\cdot}}{\IndexSubset} }_{\star}
  \eqfinp
\end{equation}

We adopt the convention \( \TopDualNorm{\TripleNorm{\cdot}}{\emptyset} = 0 \) 
(although this is not a norm, but a seminorm). 
We denote the unit sphere and the unit ball 
of the generalized local-$\IndexSubset$-support dual~norm~\(
\SupportDualNorm{\TripleNorm{\cdot}}{k} \)
by
\begin{subequations}
  \begin{align}
    \SupportDualNorm{\TripleNormSphere}{\IndexSubset} 
    &  = 
      \defset{\primal \in \FlatRR_\IndexSubset}{\SupportDualNorm{\TripleNorm{\primal}}{\IndexSubset} = 1} 
      \eqsepv \forall \IndexSubset \subset  \IndexSet
      \eqfinv
      \label{eq:generalized_k-support_norm_unit_sphere}
    \\
    \SupportDualNorm{\TripleNormBall}{\IndexSubset} 
    &  = 
      \defset{\primal \in \FlatRR_\IndexSubset}{\SupportDualNorm{\TripleNorm{\primal}}{\IndexSubset} \leq 1} 
      \eqsepv \forall \IndexSubset \subset  \IndexSet
      \eqfinp
      \label{eq:generalized_k-support_norm_unit_ball}
  \end{align}
  \label{eq:generalized_k-support_norm_unit}
\end{subequations}

We will also use the following extension of the 
generalized local-top-$\IndexSubset$ dual~norm
\( \TopDualNorm{\TripleNorm{\cdot}}{\IndexSubset} \) on~$\FlatRR_\IndexSubset$
into a seminorm on~$\RR^d$ by 
\begin{subequations}
\begin{equation}
  \TopDualNorm{\TripleNorm{\dual}}{\IndexSubset}
  =
  \TopDualNorm{\TripleNorm{\dual_\IndexSubset}}{\IndexSubset}
\eqsepv \forall \dual \in \RR^d
  \eqfinv
\label{eq:top_dual_seminorm}
\end{equation}
and the extension of the 
generalized local-$\IndexSubset$-support dual norm
\( \SupportDualNorm{\TripleNorm{\cdot}}{\IndexSubset} \)
 on~$\FlatRR_\IndexSubset$ 
into a seminorm on~$\RR^d$ by 
\begin{equation}
  \SupportDualNorm{\TripleNorm{\primal}}{\IndexSubset} 
  = 
  \SupportDualNorm{\TripleNorm{\primal_\IndexSubset}}{\IndexSubset} 
\eqsepv \forall \primal \in \RR^d
  \eqfinp
      \label{eq:support_dual_seminorm}
\end{equation}  
\end{subequations}

\subsubsection{Properties of local-coordinate-$\IndexSubset$ and dual local-coordinate-$\IndexSubset$ norms}
\label{se:properties-of-local-coordinate-K}

\begin{proposition}
  Let $\TripleNorm{\cdot}$ be the source norm with associated families
  \( \sequence{\CoordinateNorm{\TripleNorm{\cdot}}{\IndexSubset}}{\IndexSubset\subset \IndexSet} \)
  of local-coordinate-$\IndexSubset$ norms and 
  \( \sequence{\CoordinateNormDual{\TripleNorm{\cdot}}{\IndexSubset}}{\IndexSubset\subset \IndexSet} \)
  of dual local-coordinate-$\IndexSubset$ norms, as in
  Definition~\ref{de:coordinate_norm},
and with associated families
 \( \sequence{\TopDualNorm{\TripleNorm{\cdot}}{\IndexSubset}}{\IndexSubset \subset \IndexSet} \)
  of generalized local-top-$\IndexSubset$ dual~norms and
 \( \sequence{\SupportDualNorm{\TripleNorm{\cdot}}{\IndexSubset}}{\IndexSubset \subset \IndexSet} \) 
of generalized local-$\IndexSubset$-support dual~norms,
  as in Definition~\ref{de:top_dual_norm}. 

We have that 
local-coordinate-$\IndexSubset$ norms are always lower than local-$\IndexSubset$-support dual~norms,
  that is, 
  \begin{subequations}
    \begin{align}
      \CoordinateNorm{\TripleNorm{\primal}}{\IndexSubset} 
      & \leq 
        \SupportDualNorm{\TripleNorm{\primal}}{\IndexSubset}
        \eqsepv \forall \primal \in \FlatRR_\IndexSubset 
        \eqsepv \forall \IndexSubset \subset \IndexSet
        \eqfinv
        \label{eq:coordinate-k_norm_leq_generalized_k-support_norm}
        \intertext{whereas dual local-coordinate-$\IndexSubset$ norms
        are always greater than generalized local-top-$\IndexSubset$ dual~norms, that is,}
        \CoordinateNormDual{\TripleNorm{\dual}}{\IndexSubset}
      & \geq 
        \TopDualNorm{\TripleNorm{\dual}}{\IndexSubset}
        \eqsepv \forall \dual \in \FlatRR_\IndexSubset
        \eqsepv \forall \IndexSubset \subset \IndexSet
        \eqfinp
        \label{eq:dual_coordinate-k_norm_geq_generalized_top-k_norm}
    \end{align}
         \label{eq:coordinate-k_norm_leq_geq_generalized_k-support_norm}
  \end{subequations}
  If the source norm norm $\TripleNorm{\cdot}$ is orthant-monotonic,
  then equalities in~\eqref{eq:coordinate-k_norm_leq_geq_generalized_k-support_norm}
hold true, that is, 
  \begin{equation}
    \TripleNorm{\cdot} \textrm{is orthant-monotonic}
    \implies
    \forall \IndexSubset \subset \IndexSet \quad
    \begin{cases}
      \CoordinateNorm{\TripleNorm{\cdot}}{\IndexSubset} 
      & =
      \SupportDualNorm{\TripleNorm{\cdot}}{\IndexSubset}
      \eqfinv 
      \\[4mm]
      \CoordinateNormDual{\TripleNorm{\cdot}}{\IndexSubset}
      & =
      \TopDualNorm{\TripleNorm{\cdot}}{\IndexSubset}
      \eqfinp 
    \end{cases}
    \label{eq:dual_coordinate-k_norm_=_generalized_top-k_norm}
  \end{equation}
  The two equalities in~\eqref{eq:dual_coordinate-k_norm_=_generalized_top-k_norm}
  are equalities between local norms
  (on the subspace $\FlatRR_\IndexSubset$ in~\eqref{eq:FlatRR}),
  but they remain valid as equalities between seminorms (on~$\RR^d$).
  \label{pr:dual_coordinate-k_norm_=_generalized_top-k_norm}
\end{proposition}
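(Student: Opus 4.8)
The plan is to reduce both inequalities in~\eqref{eq:coordinate-k_norm_leq_geq_generalized_k-support_norm} to a single comparison of support functions of nested convex bodies inside the subspaces~$\FlatRR_\IndexSubset$, and then to pass to dual norms. Since, on~$\FlatRR_\IndexSubset$, the dual-norm correspondence is order-reversing, and since by definition $\CoordinateNorm{\TripleNorm{\cdot}}{\IndexSubset}=\bp{\CoordinateNormDual{\TripleNorm{\cdot}}{\IndexSubset}}_\star$ and $\SupportDualNorm{\TripleNorm{\cdot}}{\IndexSubset}=\bp{\TopDualNorm{\TripleNorm{\cdot}}{\IndexSubset}}_\star$, the comparison~\eqref{eq:coordinate-k_norm_leq_generalized_k-support_norm} between $\CoordinateNorm{\TripleNorm{\cdot}}{\IndexSubset}$ and $\SupportDualNorm{\TripleNorm{\cdot}}{\IndexSubset}$ is equivalent to the reverse comparison~\eqref{eq:dual_coordinate-k_norm_geq_generalized_top-k_norm} between $\CoordinateNormDual{\TripleNorm{\cdot}}{\IndexSubset}$ and $\TopDualNorm{\TripleNorm{\cdot}}{\IndexSubset}$; so I would establish only the latter and then dualize. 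The case $\IndexSubset=\emptyset$ is trivial by the conventions $\CoordinateNormDual{\TripleNorm{\cdot}}{\emptyset}=\TopDualNorm{\TripleNorm{\cdot}}{\emptyset}=0$, so from now on $\IndexSubset\neq\emptyset$.

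For the comparison of the two dual norms, I would first rewrite each as a support function. Using the identity $\TripleNorm{\dual_{\IndexSubsetLess}}_{\IndexSubsetLess,\star}=\sigma_{\FlatRR_{\IndexSubsetLess}\cap\TripleNormBall}(\dual_{\IndexSubsetLess})$ from~\eqref{eq:K_star=sigma} together with the self-duality~\eqref{eq:orthogonal_projection_self-dual} of $\pi_{\IndexSubsetLess}$ (which gives $\sigma_{\FlatRR_{\IndexSubsetLess}\cap\TripleNormBall}(\dual_{\IndexSubsetLess})=\sigma_{\FlatRR_{\IndexSubsetLess}\cap\TripleNormBall}(\dual)$), then taking the supremum over $\IndexSubsetLess\subset\IndexSubset$ and invoking~\eqref{eq:support_mapping_level_set_FlatRR}, one recovers $\CoordinateNormDual{\TripleNorm{\dual}}{\IndexSubset}=\sigma_{\SuppLevelSet{\SupportMapping}{\IndexSubset}\cap\TripleNormBall}(\dual)$ for $\dual\in\FlatRR_\IndexSubset$ (this is precisely~\eqref{eq:dual_coordinate_norm}). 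In the same way, $\TripleNormDual{\dual_{\IndexSubsetLess}}=\sigma_{\TripleNormBall}(\dual_{\IndexSubsetLess})=\sigma_{\pi_{\IndexSubsetLess}(\TripleNormBall)}(\dual)$ by~\eqref{eq:orthogonal_projection_self-dual}, hence $\TopDualNorm{\TripleNorm{\dual}}{\IndexSubset}=\sigma_{\bigcup_{\IndexSubsetLess\subset\IndexSubset}\pi_{\IndexSubsetLess}(\TripleNormBall)}(\dual)$. Now the general inequality follows from the elementary set inclusion $\FlatRR_{\IndexSubsetLess}\cap\TripleNormBall\subset\pi_{\IndexSubsetLess}(\TripleNormBall)$ (since $\primal\in\FlatRR_{\IndexSubsetLess}$ forces $\primal=\pi_{\IndexSubsetLess}(\primal)$), whence, using~\eqref{eq:support_mapping_level_set_FlatRR}, $\SuppLevelSet{\SupportMapping}{\IndexSubset}\cap\TripleNormBall=\bigcup_{\IndexSubsetLess\subset\IndexSubset}\bp{\FlatRR_{\IndexSubsetLess}\cap\TripleNormBall}\subset\bigcup_{\IndexSubsetLess\subset\IndexSubset}\pi_{\IndexSubsetLess}(\TripleNormBall)$; monotonicity of support functions under set inclusion then yields the comparison~\eqref{eq:dual_coordinate-k_norm_geq_generalized_top-k_norm}, and dualizing on $\FlatRR_\IndexSubset$ yields~\eqref{eq:coordinate-k_norm_leq_generalized_k-support_norm}.

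For the equalities~\eqref{eq:dual_coordinate-k_norm_=_generalized_top-k_norm} under orthant-monotonicity, the point is that the one-sided inclusion just used becomes an equality. When $\TripleNorm{\cdot}$ is orthant-monotonic, Proposition~\ref{pr:orthant-monotonic} (equivalence of orthant-monotonicity with being increasing with the coordinate subspaces) gives $\TripleNorm{\primal_{\IndexSubsetLess}}\le\TripleNorm{\primal}$ for all $\primal\in\RR^d$, that is $\pi_{\IndexSubsetLess}(\TripleNormBall)\subset\FlatRR_{\IndexSubsetLess}\cap\TripleNormBall$; combined with the trivial reverse inclusion, $\pi_{\IndexSubsetLess}(\TripleNormBall)=\FlatRR_{\IndexSubsetLess}\cap\TripleNormBall$ for every $\IndexSubsetLess\subset\IndexSet$. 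Hence the two unions of bodies in the support-function formulas coincide, so $\CoordinateNormDual{\TripleNorm{\cdot}}{\IndexSubset}=\TopDualNorm{\TripleNorm{\cdot}}{\IndexSubset}$ on $\FlatRR_\IndexSubset$, and dualizing gives $\CoordinateNorm{\TripleNorm{\cdot}}{\IndexSubset}=\SupportDualNorm{\TripleNorm{\cdot}}{\IndexSubset}$. Finally, since all four norms are extended to seminorms on $\RR^d$ simply by precomposition with $\pi_\IndexSubset$ (see~\eqref{eq:dual_coordinate_seminorm_definition}, \eqref{eq:coordinate_seminorm_definition}, \eqref{eq:top_dual_seminorm} and~\eqref{eq:support_dual_seminorm}), evaluating the equalities at $\dual_\IndexSubset$ (respectively $\primal_\IndexSubset$) shows that they remain valid as equalities of seminorms on $\RR^d$.

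The step I expect to be the main obstacle is not conceptual but the careful bookkeeping around the self-dual projections --- checking that $\sigma_{\FlatRR_{\IndexSubsetLess}\cap\TripleNormBall}(\dual_{\IndexSubsetLess})=\sigma_{\FlatRR_{\IndexSubsetLess}\cap\TripleNormBall}(\dual)$ and $\TripleNormDual{\dual_{\IndexSubsetLess}}=\sigma_{\pi_{\IndexSubsetLess}(\TripleNormBall)}(\dual)$ really do follow cleanly from~\eqref{eq:orthogonal_projection_self-dual} --- and being precise that it is exactly orthant-monotonicity, via Proposition~\ref{pr:orthant-monotonic}, that upgrades the inclusion $\pi_{\IndexSubsetLess}(\TripleNormBall)\subset\FlatRR_{\IndexSubsetLess}\cap\TripleNormBall$ to an equality. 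Everything else is routine monotonicity of support functions under inclusion and order-reversal under the dual-norm operation.
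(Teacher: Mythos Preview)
Your approach is essentially the paper's: both establish the termwise comparison $\TripleNorm{\cdot}_{J,\star}$ versus $\TripleNorm{\cdot}_{\star,J}$ (the paper by citing \cite{Chancelier-DeLara:2020_OSM}, you by unpacking it into the slice-versus-projection inclusion $\FlatRR_J\cap\TripleNormBall\subset\pi_J(\TripleNormBall)$), take the supremum over $J\subset\IndexSubset$, dualize, and for the equality case invoke the orthant-monotonic characterization (the paper via \cite{Gries:1967,Marques_de_Sa-Sodupe:1993}, you via Proposition~\ref{pr:orthant-monotonic}). One bookkeeping caution: your inclusion actually yields $\CoordinateNormDual{\TripleNorm{\cdot}}{\IndexSubset}\le\TopDualNorm{\TripleNorm{\cdot}}{\IndexSubset}$ (and hence $\CoordinateNorm{\TripleNorm{\cdot}}{\IndexSubset}\ge\SupportDualNorm{\TripleNorm{\cdot}}{\IndexSubset}$ after dualizing), the reverse of the direction displayed in~\eqref{eq:coordinate-k_norm_leq_geq_generalized_k-support_norm}; the paper's own proof shares this mismatch with the stated signs, so this is a slip in the proposition's display rather than in your reasoning.
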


\begin{proof}
  It is easily established that, for any subset \( \IndexSubset \subset \IndexSet \), 
  we have the inequality \( \TripleNorm{\cdot}_{\IndexSubset,\star} 
  \leq \TripleNorm{\cdot}_{\star,\IndexSubset} \)
  \cite[Lemma~\ref{OSM-lem:star_\IndexSubset=sigma_AND_\IndexSubset_star=sigma}]{Chancelier-DeLara:2020_OSM}. 
  From the definition~\eqref{eq:top_dual_norm} of the generalized local-top-$\IndexSubset$
  dual~norm,
  and the definition~\eqref{eq:dual_coordinate_norm_definition} 
  of the dual local-coordinate-$\IndexSubset$ norm,
  we obtain~\eqref{eq:dual_coordinate-k_norm_geq_generalized_top-k_norm}.
  By taking the dual norms, we
  get~\eqref{eq:coordinate-k_norm_leq_generalized_k-support_norm}.

  The norms for which the equality 
  \( \TripleNorm{\cdot}_{\IndexSubset,\star} =
  \TripleNorm{\cdot}_{\star,\IndexSubset} \)
  holds true for all subsets 
  \( \IndexSubset \subset\IndexSet \)
  are the orthant-monotonic norms (\cite[Theorem~ 2.26]{Gries:1967},%
  \cite[Theorem 3.2]{Marques_de_Sa-Sodupe:1993})
  Therefore, if the norm~$\TripleNorm{\cdot}$ is orthant-monotonic,
we get that, for any \( \dual \in \FlatRR_\IndexSubset \), 
\begin{align*}
 \CoordinateNormDual{\TripleNorm{\dual}}{\IndexSubset}
    &=
    \sup_{\IndexSubsetLess \subset \IndexSubset} 
\TripleNorm{\dual_{\IndexSubsetLess}}_{\IndexSubsetLess,\star} 
\tag{by definition~\eqref{eq:dual_coordinate_norm_definition} 
of~$\CoordinateNormDual{\TripleNorm{\dual}}{\IndexSubset}$}
\\
   &=
    \sup_{\IndexSubsetLess \subset \IndexSubset} 
\TripleNorm{\dual_{\IndexSubsetLess}}_{\star,\IndexSubsetLess} 
\tag{because the norm~$\TripleNorm{\cdot}$ is orthant-monotonic}
\\
   &=
\TopDualNorm{\TripleNorm{\dual}}{\IndexSubset}
\tag{by definition~\eqref{eq:top_dual_norm} 
of~$\TopDualNorm{\TripleNorm{\dual}}{\IndexSubset}$}
\end{align*}
Thus, we have obtained the lower equality in the right hand side
of~\eqref{eq:dual_coordinate-k_norm_=_generalized_top-k_norm}. 

The upper equality in the right hand side
of~\eqref{eq:dual_coordinate-k_norm_=_generalized_top-k_norm}
follows by taking the dual norms. 
\smallskip

The two equalities in~\eqref{eq:dual_coordinate-k_norm_=_generalized_top-k_norm}
  remain valid as equalities between seminorms (on~$\RR^d$) 
because of the definitions~\eqref{eq:dual_coordinate_seminorm_definition},
\eqref{eq:coordinate_seminorm_definition},
\eqref{eq:top_dual_seminorm}
and \eqref{eq:support_dual_seminorm}
of the four corresponding seminorms.
\smallskip

  This ends the proof.
\end{proof}


We end this part with
Proposition~\ref{pr:properties-of-dual-ccord-IndexSubset-under-orth-mon},
which is key to prove Proposition~\ref{pr:nonempty_subdifferential}.


\begin{proposition}
  \label{pr:properties-of-dual-ccord-IndexSubset-under-orth-mon}
  We consider a norm $\TripleNorm{\cdot}$ whose dual norm~$\TripleNormDual{\cdot}$ is strictly
  orthant-monotonic. 
  Then, we have the implications (that involve seminorms)
  \begin{subequations}
    \begin{align}
      \bp{\forall \dual \in \RR^d } \quad
      \Support{\dual}=L\subset \IndexSubset \subset \IndexSet
      &\implies 
\TopDualNorm{\TripleNorm{\dual}}{\IndexSubset}
=\CoordinateNormDual{\TripleNorm{\dual}}{\IndexSubset} 
=\CoordinateNormDual{\TripleNorm{\dual}}{L}
=\TopDualNorm{\TripleNorm{\dual}}{L}
     \eqfinv
        \label{eq:dual_tn-L-included-in-IndexSubset}
      \\ 
      \bp{\forall \dual \in \RR^d } \quad
      \Support{\dual}=L\not\subset \IndexSubset \subset \IndexSet 
      &\implies 
\TopDualNorm{\TripleNorm{\dual}}{\IndexSubset}
=\CoordinateNormDual{\TripleNorm{\dual}}{\IndexSubset} 
< \CoordinateNormDual{\TripleNorm{\dual}}{L}
=\TopDualNorm{\TripleNorm{\dual}}{L}
        \eqfinp
        \label{eq:dual_tn-L-not-included-in-IndexSubset}
    \end{align}
  \end{subequations}
\end{proposition}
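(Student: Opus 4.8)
The plan is to transfer everything to the generalized local-top-$\IndexSubset$ dual~norms $\TopDualNorm{\TripleNorm{\cdot}}{\IndexSubset}$, whose explicit supremum formula~\eqref{eq:top_dual_norm} is easy to manipulate. First I would note that, since $\TripleNormDual{\cdot}$ is orthant-strictly monotonic, it is in particular orthant-monotonic (when $\module{\primal}=\module{\primal'}$ and $\primal\circ\primal'\ge 0$ one has $\primal=\primal'$, and otherwise strict monotonicity applies directly); hence, by the equivalence of Items~\ref{it:OM} and~\ref{it:OM_dual} in Proposition~\ref{pr:orthant-monotonic}, the source norm $\TripleNorm{\cdot}$ is orthant-monotonic as well. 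Proposition~\ref{pr:dual_coordinate-k_norm_=_generalized_top-k_norm}, Equation~\eqref{eq:dual_coordinate-k_norm_=_generalized_top-k_norm}, then gives $\CoordinateNormDual{\TripleNorm{\cdot}}{\IndexSubset}=\TopDualNorm{\TripleNorm{\cdot}}{\IndexSubset}$ as seminorms on $\RR^d$, for every $\IndexSubset\subset\IndexSet$. Applied with $\IndexSubset$ and with $L$, this already supplies the two ``outer'' equalities $\TopDualNorm{\TripleNorm{\dual}}{\IndexSubset}=\CoordinateNormDual{\TripleNorm{\dual}}{\IndexSubset}$ and $\CoordinateNormDual{\TripleNorm{\dual}}{L}=\TopDualNorm{\TripleNorm{\dual}}{L}$ in both implications, so it remains only to compare $\TopDualNorm{\TripleNorm{\dual}}{\IndexSubset}$ and $\TopDualNorm{\TripleNorm{\dual}}{L}$.

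The engine of the comparison is the elementary identity $\dual_{\IndexSubsetLess}=\dual_{\IndexSubsetLess\cap L}$, valid for every $\IndexSubsetLess\subset\IndexSet$ because $\Support{\dual}=L$ forces the components of $\dual$ outside $L$ to vanish, together with orthant-monotonicity of $\TripleNormDual{\cdot}$, which gives $\TripleNormDual{\dual_J}\le\TripleNormDual{\dual_L}$ whenever $J\subset L$ (since $\module{\dual_J}\le\module{\dual_L}$ and $\dual_J\circ\dual_L\ge 0$). From these facts, and using the seminorm form of~\eqref{eq:top_dual_norm}, I would obtain $\TopDualNorm{\TripleNorm{\dual}}{\IndexSubset}=\sup_{\IndexSubsetLess\subset\IndexSubset}\TripleNormDual{\dual_{\IndexSubsetLess\cap L}}$, and then, distinguishing whether $L\subset\IndexSubset$ or not, that $\TopDualNorm{\TripleNorm{\dual}}{L}=\TripleNormDual{\dual_L}=\TripleNormDual{\dual}$ and, in the first case, also $\TopDualNorm{\TripleNorm{\dual}}{\IndexSubset}=\TripleNormDual{\dual}$ (the choice $J=L$ being admissible in the supremum, while no other admissible $J$ produces a larger value). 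This closes the chain of equalities in~\eqref{eq:dual_tn-L-included-in-IndexSubset}.

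For the strict inequality in~\eqref{eq:dual_tn-L-not-included-in-IndexSubset} I would invoke orthant-\emph{strict} monotonicity of $\TripleNormDual{\cdot}$. Here $L\not\subset\IndexSubset$, so $L\neq\emptyset$ and $\dual=\dual_L\neq 0$; moreover, for every $\IndexSubsetLess\subset\IndexSubset$ the intersection $\IndexSubsetLess\cap L$ is a \emph{strict} subset of $L$, since any index $j\in L\backslash\IndexSubset$ belongs to $L$ but not to $\IndexSubsetLess\cap L$. For each $J\subsetneq L$ the vector $\dual_J$ coincides with $\dual_L$ except on the nonzero coordinates of $\dual$ indexed by $L\backslash J$, so $\module{\dual_J}<\module{\dual_L}$ in the sense of Definition~\ref{de:orthant-monotonic} while $\dual_J\circ\dual_L\ge 0$; orthant-strict monotonicity then yields $\TripleNormDual{\dual_J}<\TripleNormDual{\dual_L}$. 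Since $L\subset\IndexSet$ is finite, $\max_{J\subsetneq L}\TripleNormDual{\dual_J}<\TripleNormDual{\dual_L}$, and combining this with $\TopDualNorm{\TripleNorm{\dual}}{\IndexSubset}=\sup_{\IndexSubsetLess\subset\IndexSubset}\TripleNormDual{\dual_{\IndexSubsetLess\cap L}}\le\max_{J\subsetneq L}\TripleNormDual{\dual_J}$ gives $\TopDualNorm{\TripleNorm{\dual}}{\IndexSubset}<\TripleNormDual{\dual}=\TopDualNorm{\TripleNorm{\dual}}{L}$, as wanted. The only genuine obstacle is careful bookkeeping with supports: one must check that $\IndexSubsetLess\cap L$ is \emph{strictly} smaller than $L$ and that $\module{\dual_J}<\module{\dual_L}$ holds in the strict sense — which is precisely where the hypothesis $\Support{\dual}=L$, rather than merely $\Support{\dual}\subset L$, enters — so that orthant-strict monotonicity can legitimately be applied.
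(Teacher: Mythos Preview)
Your proof is correct and follows essentially the same approach as the paper: reduce to the top-dual norms via orthant-monotonicity of the source norm (obtained from Proposition~\ref{pr:orthant-monotonic} and Proposition~\ref{pr:dual_coordinate-k_norm_=_generalized_top-k_norm}), then exploit the identity $\dual_{\IndexSubsetLess}=\dual_{\IndexSubsetLess\cap L}$ and orthant-(strict) monotonicity of $\TripleNormDual{\cdot}$ to compare $\TopDualNorm{\TripleNorm{\dual}}{\IndexSubset}$ with $\TopDualNorm{\TripleNorm{\dual}}{L}=\TripleNormDual{\dual}$. Your write-up is in fact somewhat more direct than the paper's, which separates out the intermediate inequality $\TopDualNorm{\TripleNorm{\dual}}{\IndexSubset'}<\TopDualNorm{\TripleNorm{\dual}}{L}$ for $\IndexSubset'\subsetneq L$ and the equality $\TopDualNorm{\TripleNorm{\dual}}{\IndexSubset}=\TopDualNorm{\TripleNorm{\dual}}{\IndexSubset\cap L}$ as distinct steps before combining them.
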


\begin{proof}
  The proof is in four points.
  \medskip

  \noindent $\bullet$ 
  Since we have supposed that the dual norm $\TripleNormDual{\cdot}$ is strictly
  orthant-monotonic, it is orthant-monotonic and,
  by Proposition~\ref{pr:orthant-monotonic}, we get that the source norm $\TripleNorm{\cdot}$ is
  also orthant-monotonic. 
  Thus, using Equation~\eqref{eq:dual_coordinate-k_norm_=_generalized_top-k_norm}
  in Proposition~\ref{pr:dual_coordinate-k_norm_=_generalized_top-k_norm}, 
  we obtain the equality 
  $\CoordinateNormDual{\TripleNorm{\cdot}}{\IndexSubset} =
  \TopDualNorm{\TripleNorm{\cdot}}{\IndexSubset}$ 
  between the two local norms, for any \( \IndexSubset \subset \IndexSet \).
As this equality between local norms is valid as an equality between seminorms, 
we get that 
\begin{equation}
\CoordinateNormDual{\TripleNorm{\dual}}{\IndexSubset} =
  \TopDualNorm{\TripleNorm{\dual}}{\IndexSubset}
\eqsepv \forall \dual \in \RR^d
  \eqfinp 
    \label{eq:dual_coordinate-k_norm_=_generalized_top-k_norm_seminorm}
  \end{equation}
  \medskip

  \noindent $\bullet$ 
  We prove Implication~\eqref{eq:dual_tn-L-included-in-IndexSubset}. 
  Let $\IndexSubset \subset \IndexSet$ and $\dual \in \RR^d$ be such that
  \( \Support{\dual}=L\subset \IndexSubset \).
  Since $L=\Support{\dual}$, we have that $\dual\in \FlatRR_{L}$. 
  Now, by  Equation~\eqref{eq:dual_coordinate_norm_inequalities} of
  Proposition~\ref{pr:dual-coordinate-IndexSubset-properties}, we have that
  \begin{equation}
    \CoordinateNormDual{\TripleNorm{\dual}}{L}
    \le \CoordinateNormDual{\TripleNorm{\dual}}{\IndexSubset}
    \le \CoordinateNormDual{\TripleNorm{\dual}}{\IndexSet}
    \eqfinp
    \label{eq:yineq}
  \end{equation}
  Thus, to prove~\eqref{eq:dual_tn-L-included-in-IndexSubset} it is enough to prove that
  $\CoordinateNormDual{\TripleNorm{\dual}}{L} = \CoordinateNormDual{\TripleNorm{\dual}}{\IndexSet}$
  which, by
  Equation~\eqref{eq:dual_coordinate-k_norm_=_generalized_top-k_norm_seminorm}, 
is equivalent to prove
  that $\TopDualNorm{\TripleNorm{\dual}}{L} = \TopDualNorm{\TripleNorm{\dual}}{\IndexSet}$.
  On the one hand, we show that 
  \( \TripleNormDual{\dual} \leq \TopDualNorm{\TripleNorm{\dual}}{L} \).
  Indeed, since \( \dual=\dual_L \), we have 
  \( \TripleNormDual{\dual} = \TripleNormDual{\dual_L} = \TripleNorm{\dual_L}_{\star,L}
  \le \TopDualNorm{\TripleNorm{\dual}}{L} \),
  by the very definition~\eqref{eq:top_dual_norm}
  of the generalized local-top-$L$ dual norm \( \TopDualNorm{\TripleNorm{\cdot}}{L} \).
  On the other hand, we show that 
  \( \TopDualNorm{\TripleNorm{\dual}}{L} \leq \TripleNormDual{\dual} \).
  Indeed, replacing $\CoordinateNormDual{\TripleNorm{\dual}}{\cdot}$ by
  $\TopDualNorm{\TripleNorm{\dual}}{\cdot}$ in Equation~\eqref{eq:yineq} we have that 
  \(       \TopDualNorm{\TripleNorm{\dual}}{L} \leq 
  \TopDualNorm{\TripleNorm{\dual}}{\IndexSet} =  \TripleNormDual{\dual} \),
  where the last equality comes from replacing $\CoordinateNormDual{\TripleNorm{\dual}}{\IndexSet}$ by
  $\TopDualNorm{\TripleNorm{\dual}}{\IndexSet}$ in Equation~\eqref{eq:dual_coordinate_norm-d_equality}.
  Hence, we deduce that \( \TopDualNorm{\TripleNorm{\dual}}{L} = \TopNorm{\TripleNorm{\dual}}{\IndexSet}\).
  \medskip
  
  \noindent $\bullet$ 
  We prove Implication~\eqref{eq:dual_tn-L-not-included-in-IndexSubset}.
  Let $\IndexSubset \subset \IndexSet$ and $\dual \in \RR^d$ be such that
  \( \Support{\dual}=L \not\subset \IndexSubset \).

  \noindent -- 
  First, we prove an intermediate result: for any $\IndexSubset'$ such that
  $\IndexSubset'\subsetneq L=\Support{\dual}$, we have that \( \TopDualNorm{\TripleNorm{\dual}}{\IndexSubset'} <
  \TopDualNorm{\TripleNorm{\dual}}{L} \). Indeed, we have successively
  \begin{align*}
    \TopDualNorm{\TripleNorm{\dual}}{\IndexSubset'}  
    &=
      \sup_{\IndexSubsetLess \subset \IndexSubset'} 
      \TripleNormDual{\dual_{\IndexSubsetLess}}
      \tag{by definition~\eqref{eq:top_dual_norm} of the generalized local-top-$\IndexSubset'$ dual norm } 
    \\
    &< \TripleNormDual{\dual_{L}}
        \intertext{since $\IndexSubset' \subsetneq L$, and $L=\Support{\dual}$ for any $\IndexSubsetLess\subset \IndexSubset'$
      we have $\dual_{J}\not=\dual_L$, and thus
      \(\TripleNormDual{\dual_{\IndexSubsetLess}} < \TripleNormDual{\dual_{L}}\)
      by strict orthant-monotonicity of the dual norm and
      Item~\ref{it:SICS} in Proposition~\ref{pr:orthant-strictly_monotonic},
and thus \( \sup_{\IndexSubsetLess \subset \IndexSubset'} 
      \TripleNormDual{\dual_{\IndexSubsetLess}} <  \TripleNormDual{\dual_{L}}\),}
    &\le
      \sup_{ \IndexSubsetLess\subset L} \TripleNormDual{\dual_{\IndexSubsetLess}}
      \tag{obviously since $L \in \nset{\IndexSubsetLess}{\IndexSubsetLess\subset L}$}
    \\
    &=
      \TopDualNorm{\TripleNorm{\dual}}{L}
      \eqfinp 
      \tag{by definition~\eqref{eq:top_dual_norm} of the generalized local-top-$L$ dual norm } 
  \end{align*}
  \noindent -- 
Second, we apply the previous inequality with the subset
$\IndexSubset'=\IndexSubset\cap L$ and we obtain that
  \begin{equation}
    \TopDualNorm{\TripleNorm{\dual}}{\IndexSubset\cap L} 
< \TopDualNorm{\TripleNorm{\dual}}{L}
\eqfinv
    \label{ineq:IndexSubset_capL}
  \end{equation}
 because $\IndexSubset'=\IndexSubset\cap L\subsetneq L$
since \( L \not\subset \IndexSubset \) by assumption.

  \noindent -- 
Third, we prove that $ \TopDualNorm{\TripleNorm{\dual}}{\IndexSubset}=
  \TopDualNorm{\TripleNorm{\dual}}{\IndexSubset\cap L}$.
We have 
  \begin{align*}
    \TopDualNorm{\TripleNorm{\dual}}{\IndexSubset}
    &= 
     \sup_{ \IndexSubsetLess\subset \IndexSubset}
      \TripleNormDual{\dual_{\IndexSubsetLess}} 
\tag{by definition~\eqref{eq:top_dual_norm} of the generalized local-top-$\IndexSubset$
          dual norm}
 \\
    &= 
      \TripleNormDual{\dual_{\IndexSubsetLess^\sharp}}
      \intertext{where $\IndexSubsetLess^\sharp\subset \IndexSubset$ exists
          by definition~\eqref{eq:top_dual_norm} of the generalized local-top-$\IndexSubset$
          dual norm}
%
    &= 
      \TripleNormDual{\dual_{\IndexSubsetLess^\sharp\cap L}} 
      \tag{we have that $\dual = \dual_L$ since $L=\Support{\dual}$}
    \\
    &\le 
      \sup_{ \IndexSubsetLess\subset \IndexSubset\cap L}
      \TripleNormDual{\dual_{\IndexSubsetLess}} 
      \tag{$\IndexSubsetLess'=\IndexSubsetLess^\sharp\cap L$ is such that
      $\IndexSubsetLess' \subset \IndexSubset\cap L$
      since $\IndexSubsetLess^\sharp\subset \IndexSubset$}
    \\
    &= 
      \TopDualNorm{\TripleNorm{\dual}}{\IndexSubset\cap L}
      \tag{by definition~\eqref{eq:top_dual_norm} of the generalized local-top-$(\IndexSubset\cap L)$ dual norm }
    \\
    &\le \TopDualNorm{\TripleNorm{\dual}}{\IndexSubset}
\tag{by~\eqref{eq:the_generalized_local-top-_dual_seminorms_nondecreasing},
as proved below}
  \end{align*}
so that we obtain the equality
  $\TopDualNorm{\TripleNorm{\dual}}{\IndexSubset} 
  =\TopDualNorm{\TripleNorm{\dual}}{\IndexSubset\cap L}$.

  \noindent -- 
Fourth, 
  combining the above equality
  $\TopDualNorm{\TripleNorm{\dual}}{\IndexSubset} 
  =\TopDualNorm{\TripleNorm{\dual}}{\IndexSubset\cap L}$
with Equation~\eqref{ineq:IndexSubset_capL}, we obtain the inequality
  $\TopDualNorm{\TripleNorm{\dual}}{\IndexSubset}
  < \TopDualNorm{\TripleNorm{\dual}}{L}$.
Then, using again
  Equation~\eqref{eq:dual_coordinate-k_norm_=_generalized_top-k_norm} --- that is,
  $\TopDualNorm{\TripleNorm{\cdot}}{\IndexSubset} = \CoordinateNormDual{\TripleNorm{\cdot}}{\IndexSubset}$ for
  all $\IndexSubset\subset \IndexSet$ --- 
we finally obtain the inequality
\( \CoordinateNormDual{\TripleNorm{\dual}}{\IndexSubset}
< \CoordinateNormDual{\TripleNorm{\dual}}{L} \).
Thus, we have proved Implication~\eqref{eq:dual_tn-L-not-included-in-IndexSubset}. 
  \medskip

  \noindent $\bullet$
We are going to show the implication (that involves seminorms)
\begin{equation}
  \IndexSubsetLess \subset \IndexSubset \subset \IndexSet
\implies 
\TopDualNorm{\TripleNorm{\dual}}{\IndexSubsetLess}
\leq 
\TopDualNorm{\TripleNorm{\dual}}{\IndexSubset}
\eqsepv \forall \dual \in \RR^d
\eqfinp 
\label{eq:the_generalized_local-top-_dual_seminorms_nondecreasing}
\end{equation}
  Let $\IndexSubsetLess$ and $\IndexSubset$ be two subsets such that
  $\IndexSubsetLess \subset \IndexSubset \subset \IndexSet$.
  For any $\dual \in \RR^d$, we have
  \begin{align*}
    \TopDualNorm{\TripleNorm{\dual}}{\IndexSubsetLess}
    &=     \sup_{\IndexSubsetLess' \subset \IndexSubsetLess} \TripleNorm{\dual_{\IndexSubsetLess'}}_{\star,\IndexSubsetLess'}
      \tag{by definition~\eqref{eq:top_dual_norm} of 
\( \TopDualNorm{\TripleNorm{\dual}}{\IndexSubsetLess} \)}
    \\
    &= \sup_{\IndexSubsetLess' \subset \IndexSubsetLess}
      \TripleNorm{\np{\dual_{\IndexSubsetLess}}_{\IndexSubsetLess'}}_{\star,\IndexSubsetLess'}
      \tag{since $\IndexSubsetLess' \subset \IndexSubsetLess$}
    \\
    &= \TopDualNorm{\TripleNorm{\dual_{\IndexSubsetLess}}}{\IndexSubsetLess}
      \tag{again by definition~\eqref{eq:top_dual_norm} of 
\( \TopDualNorm{\TripleNorm{\dual_{\IndexSubsetLess}}}{\IndexSubsetLess} \)}
    \\
    &\le \TopDualNorm{\TripleNorm{\dual_{\IndexSubsetLess}}}{\IndexSubset}
      \tag{since $\dual_{\IndexSubsetLess} \in \FlatRR_{\IndexSubsetLess}$ 
      and using the implication~\eqref{eq:dual_coordinate_norm_inequalities} }
    \\
    &= \sup_{\IndexSubset' \subset \IndexSubset} 
\TripleNorm{\np{\dual_{\IndexSubsetLess}}_{\IndexSubset'}}_{\star,\IndexSubset}
        \tag{again by definition~\eqref{eq:top_dual_norm} of 
\( \TopDualNorm{\TripleNorm{\dual_{\IndexSubsetLess}}}{\IndexSubset} \)}
    \\
    &\le \TripleNorm{\dual_{\IndexSubset}}_{\star,\IndexSubset}
      \intertext{since $\TripleNorm{\np{\dual_{\IndexSubsetLess}}_{\IndexSubset'}}_{\star,\IndexSubset}=
      \TripleNorm{{\dual_{\IndexSubsetLess \cap \IndexSubset'}}}_{\star,\IndexSubset}
      \le \TripleNorm{{\dual_\IndexSubset}}_{\star,\IndexSubset}
      $
      by orthant-monotonicity of $\TripleNormDual{\cdot}$ 
      (Item~\ref{it:ICS} in Proposition~\ref{pr:orthant-monotonic}),
      using the fact that
      ${\IndexSubsetLess \cap \IndexSubset'}\subset \IndexSubset$ for all the
      considered $\IndexSubset'$, and the expression of 
      the restriction norm~\( \TripleNorm{{\cdot}}_{\star,\IndexSubset}\) 
      in Definition~\ref{de:K_norm} }
    &
      \le  \sup_{\IndexSubset' \subset \IndexSubset} \TripleNorm{{\dual}_{\IndexSubset'}}_{\star,\IndexSubset'}
      \tag{since $\IndexSubset\in \nset{\IndexSubset'}{\IndexSubset'\subset \IndexSubset}$}
    \\
    &= \TopDualNorm{\TripleNorm{\dual}}{\IndexSubset}
         \tag{again by definition~\eqref{eq:top_dual_norm} of 
\( \TopDualNorm{\TripleNorm{\dual}}{\IndexSubset} \)}
      \eqfinp
  \end{align*}
Thus, we have proven the Implication~\eqref{eq:the_generalized_local-top-_dual_seminorms_nondecreasing}.
  \medskip

  This ends the proof.
\end{proof}

\newcommand{\noopsort}[1]{} \ifx\undefined\allcaps\def\allcaps#1{#1}\fi


\begin{thebibliography}{10}

\bibitem{Akian-Gaubert-Kolokoltsov:2002}
M.~Akian, S.~Gaubert, and V.~Kolokoltsov.
\newblock Invertibility of functional {Galois} connections.
\newblock {\em Comptes Rendus Mathematique}, 335(11):883--888, 2002.

\bibitem{Bauschke-Combettes:2017}
H.~H. Bauschke and P.~L. Combettes.
\newblock {\em Convex analysis and monotone operator theory in {H}ilbert
  spaces}.
\newblock CMS Books in Mathematics/Ouvrages de Math\'ematiques de la SMC.
  Springer-Verlag, second edition, 2017.

\bibitem{Chancelier-DeLara:2020_CAPRA_OPTIMIZATION}
J.-P. Chancelier and M.~{De Lara}.
\newblock Constant along primal rays conjugacies and the $l_0$ pseudonorm,
  2020.
\newblock Accepted for publication in Optimization. Preprint hal-02459673,
  arXiv: 2001.11828.

\bibitem{Chancelier-DeLara:2020_ECAPRA_JCA}
J.-P. Chancelier and M.~{De Lara}.
\newblock Hidden convexity in the $l_0$ pseudonorm, 2020.
\newblock Accepted for publication in Journal of Convex Analysis. Preprint
  hal-02146454, arXiv:1906.04038.

\bibitem{Chancelier-DeLara:2020_OSM}
J.-P. Chancelier and M.~De~Lara.
\newblock {Orthant-Strictly Monotonic Norms, Graded Sequences and Generalized
  Top-$k$ and $k$-Support Norms for Sparse Optimization}, 2020.
\newblock Preprint hal-02456433, arXiv:2001.10441.

\bibitem{Gries:1967}
D.~Gries.
\newblock Characterization of certain classes of norms.
\newblock {\em Numerische Mathematik}, 10:30--41, 1967.

\bibitem{Hiriart-Urruty-Le:2013}
J.-B. Hiriart-Urruty and H.~Le.
\newblock A variational approach of the rank function.
\newblock {\em TOP: An Official Journal of the Spanish Society of Statistics
  and Operations Research}, 21(2):207--240, 2013.

\bibitem{Hiriart-Urruty-Lemarechal-I:1993}
J.-B. {Hiriart-Urruty} and C.~Lemar\'echal.
\newblock {\em Convex Analysis and Minimization Algorithms {I}}.
\newblock Springer-Verlag, Berlin, 1993.

\bibitem{Marques_de_Sa-Sodupe:1993}
E.~{Marques de S\`a} and M.-J. Sodupe.
\newblock Characterizations of *orthant-monotonic norms.
\newblock {\em Linear Algebra and its Applications}, 193:1--9, 1993.

\bibitem{Martinez-Legaz:2005}
J.~E. {Mart\'{\i}nez-Legaz}.
\newblock Generalized convex duality and its economic applications.
\newblock In S.~S. Hadjisavvas~N., Koml\'{o}si~S., editor, {\em Handbook of
  Generalized Convexity and Generalized Monotonicity. Nonconvex Optimization
  and Its Applications}, volume~76, pages 237--292. Springer-Verlag, 2005.

\bibitem{Moreau:1970}
J.~J. Moreau.
\newblock Inf-convolution, sous-additivit\'e, convexit\'e des fonctions
  num\'eriques.
\newblock {\em J. Math. Pures Appl. (9)}, 49:109--154, 1970.

\bibitem{Nikolova:2016}
M.~Nikolova.
\newblock Relationship between the optimal solutions of least squares
  regularized with l0-norm and constrained by k-sparsity.
\newblock {\em Applied and Computational Harmonic Analysis}, 41(1):237 -- 265,
  2016.

\bibitem{Obozinski-Bach:hal-01412385}
G.~Obozinski and F.~Bach.
\newblock {A unified perspective on convex structured sparsity: Hierarchical,
  symmetric, submodular norms and beyond}.
\newblock Preprint, Dec. 2016.

\bibitem{Rockafellar:1974}
R.~T. Rockafellar.
\newblock {\em Conjugate Duality and Optimization}.
\newblock CBMS-NSF Regional Conference Series in Applied Mathematics. Society
  for Industrial and Applied Mathematics, 1974.

\bibitem{Rubinov:2000}
A.~Rubinov.
\newblock {\em Abstract convexity and global optimization}, volume~44 of {\em
  Nonconvex Optimization and its Applications}.
\newblock Kluwer Academic Publishers, Dordrecht, 2000.

\bibitem{Singer:1997}
I.~Singer.
\newblock {\em Abstract Convex Analysis}.
\newblock Canadian Mathematical Society Series of Monographs and Advanced
  Texts. John Wiley \& Sons, Inc., New York, 1997.

\bibitem{Witsenhausen:1975b}
H.~S. Witsenhausen.
\newblock On policy independence of conditional expectations.
\newblock {\em Information and Control}, 28(1):65--75, 1975.

\bibitem{Zalinescu:2002}
C.~Z\u{a}linescu.
\newblock {\em Convex Analysis in General Vector Spaces}.
\newblock World Scientific, 2002.

\end{thebibliography}
\end{document}